
\documentclass[a4paper]{amsart}
\usepackage{latexsym,bm,stmaryrd}
\usepackage{amsmath,amsthm,amsfonts,amssymb,mathrsfs,pb-diagram,amscd}

\usepackage[all]{xy}

\usepackage{microtype}
\usepackage{mathtools}
\usepackage{mathbbol,wasysym}

\usepackage{tikz}
\usetikzlibrary{matrix}
\synctex=1

\usepackage[enableskew,vcentermath]{youngtab}
\usepackage[boxsize=1.25em, centerboxes]{ytableau}
\usepackage[misc]{ifsym}

\newcommand{\Parts}[1][n]{\P_{#1}}
\newcommand\blam{{\boldsymbol\lambda}}

\newcommand\bmu{{\boldsymbol\mu}}

\DeclareMathOperator\Shape{Shape}

\let\<=\langle
\let\>=\rangle
\def\({\big(}
\def\){\big)}

\def\Z{\mathbb{Z}}

\def\C{\mathbb{C}}
\def\N{\mathbb{N}}

\def\t{\mathfrak{t}}
\def\s{\mathfrak{s}}
\def\u{\mathfrak{u}}
\def\v{\mathfrak{v}}

\def\OP{\mathcal{P}}
\def\CC{\mathscr{C}^{c}}
\def\CP{\mathscr{P}^{c}}

\def\tlam{\t^{\blam}}
\def\tmu{\t^{\bmu}}
\def\O{\mathcal{O}}
\def\D{\mathcal{D}}
\def\lam{\lambda}
\def\Lam{\Lambda}
\def\Sym{\mathfrak{S}}

\def\mL{\mathcal{L}}

\def\bn[#1,#2]{\begin{bmatrix}#1\\#2\end{bmatrix}}

\def\K{\mathscr{K}}
\def\P{\mathscr{P}}

\def\fb{\mathbf{b}}

\def\mf{\mathfrak{f}}
\def\fm{\mathfrak{m}}

\def\SStd{\mathop{\rm Std}\nolimits^2}

\def\D{\mathscr{D}}

\def\dyo[#1]{y^{\<#1\>}}

\def\tlam{\t^\blam}

\def\fb{\mathbf{b}}

\newcommand\RR{\mathscr{R}}
\newcommand\R[1][n]{\RR_{#1}^{\Lambda}}

\newcommand\HH{\mathscr{H}}

\renewcommand{\Parts}[1][n]{\P_{#1}}

\newcommand\bi{\mathbf{i}}

\DeclareMathOperator\Hom{Hom}

\DeclareMathOperator\cha{char}

\DeclareMathOperator\id{id}

\DeclareMathOperator\Tr{Tr}

\DeclareMathOperator\res{res}
\DeclareMathOperator\Ht{ht}

\DeclareMathOperator\cont{cont}
\DeclareMathOperator\Std{Std}

\title[Cocenter of the cyclotomic Hecke algebras]{On the cocenter of the cyclotomic Hecke algebra of type $G(r,1,n)$}
\subjclass[2010]{20C08, 16G99, 06B15}
\keywords{Complex reflection group, cyclotomic Hecke algebra, center}
\author{Jun Hu}\address{Key Laboratory of Algebraic Lie Theory and Analysis of Ministry of Education\\
School of Mathematics and Statistics\\
  Beijing Institute of Technology\\
  Beijing, 100081, P.R. China}
\email{junhu404@bit.edu.cn}

  \author{Lei Shi}\address{Academy of Mathematics and Systems Science\\
	Chinese Academy of Sciences, Beijing 100190\\
	P.R.China}
\address{Max-Planck-Institut f\"ur Mathematik\\
	Vivatsgasse 7, 53111 Bonn\\
	Germany}
\email{leishi202406@163.com}

\numberwithin{equation}{section}
\newtheorem{prop}[equation]{Proposition}
\newtheorem{thm}[equation]{Theorem}

\newtheorem{cor}[equation]{Corollary}
\newtheorem{conj}[equation]{Conjecture}

\newtheorem{lem}[equation]{Lemma}

\theoremstyle{definition}
\newtheorem{dfn}[equation]{Definition}
\theoremstyle{remark}
\newtheorem{rem}[equation]{Remark}

\begin{document}

\begin{abstract}
	In this paper, we construct some integral bases for the cocenter of the non-degenerate cyclotomic Hecke algebra $\HH_{n,K}$ of type $G(r,1,n)$ by generalizing Geck and Pfeiffer's work on the cocenters of the Iwahori-Hecke algebras associated to finite Weyl groups. We show that the dimensions of both the cocenter and the center of the non-degenerate cyclotomic Hecke algebra $\HH_{n,K}$ are independent of the characteristic of the ground field $K$, its Hecke parameter and cyclotomic parameters. As applications, we verify Chavli-Pfeiffer's conjecture on the polynomial coefficient $g_{w,C}$ (\cite[Conjecture 3.7]{CP}) for the complex reflection group of type $G(r,1,n)$ and also show that both the cocenters and the centers of certain cyclotomic KLR algebras of affine type $A$ are independent of the characteristic of the ground field.
\end{abstract}

\maketitle
\setcounter{tocdepth}{1}
\tableofcontents

\section{Introduction}

Let $r,n\in\Z_{\geq 1}$. The wreath product $(\Z/r\Z)\wr\Sym_n$ of the cyclic group $\Z/r\Z$ with the symmetric group $\Sym_n$ is called the complex reflection group of type $G(r,1,n)$. It can be realized as the group of all monomial matrices of size $n$ whose nonzero entries are $r$th roots of unity.

\begin{dfn}\label{crg} The complex reflection group $W_n$ of type $G(r,1,n)$ is isomorphic to the group presented by the generators $S:=\{t,s_1,\cdots, s_{n-1}\}$ and the following relations:$$\begin{aligned}
		& t^r=s_i^2=1,\, \forall\, 1\leq i\leq n-1;\\
		& ts_1ts_1=s_1ts_1t,\quad ts_i=s_it,\,\,\forall\,2\leq i\leq n-1; \\
		& s_is_{i+1}s_i=s_{i+1}s_is_{i+1},\,\,\forall\, 1\leq i<n-1;\\
		& s_is_j=s_js_i,\, \forall\,1\leq i<j-1<n-1.\end{aligned} $$
\end{dfn}

If $r=1$, then $W_n$ coincides with the symmetric group $\Sym_n$ on $\{1,2,\cdots,n\}$ with standard Coxeter generators $\{s_i=(i,i+1)|i=1,2,\cdots,n-1\}$. If $r=2$, then $W_n$ coincides with the Weyl group of type $B_n$ with standard Coxeter generators $\{t, s_i|i=1,2,\cdots,n-1\}$.

Given $w\in W_n$, a word $x_1\cdots x_k$ on $S=\{t,s_1,\cdots,s_{n-1}\}$ is called an expression of $w$ if $x_i\in S, \forall\,1\leq i\leq k$, and $w=x_1\cdots x_k$. If $x_1\cdots x_k$ is an expression of $w$ with $k$ minimal, then we call it a reduced expression of $w$ and say $w$ has length $k$. We denote $\ell(w)=k$. For any $w,\,w'\in W_n$, we write $w \overset{s}{\rightarrow} w'$ if $w'=sws^{-1}$ for some $s\in S$, $\ell(w')\leq \ell(w)$ and \begin{equation}\label{2possibi0}
	\text{either $\ell(sw)<\ell(w)$ or $\ell(ws^{-1})<\ell(w)$.}
\end{equation}
If $w=w_1,\,w_2,\cdots,\,w_m=w'\in W_n$ such that for any $1\leq i<m$, $w_{i} \overset{x_i}{\rightarrow} w_{i+1}$ for some $x_i\in S$, then we write $w \overset{(x_1,\cdots,x_{m-1})}{\longrightarrow} w'$ or $w\rightarrow w'$.

The following theorem, which generalizes Geck and Pfeiffer's work \cite{GP} on the minimal length elements in each conjugacy class of Weyl groups to the complex reflection group $W_n$, is the first main result of this paper.

\begin{thm}\label{mainthm1} For any conjugacy class $C$ of $W_n$ and any $w\in C$, there exists an element $w'\in C_{\min}$, such that $w\rightarrow w'$, where $C_{\min}$ is the set of minimal length elements in $C$.
\end{thm}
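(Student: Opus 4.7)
The plan is to adapt Geck–Pfeiffer's strategy from Weyl groups to the complex reflection group $W_n=(\Z/r\Z)\wr\Sym_n$, working throughout with the naive length function. The first step is to catalogue the conjugacy classes: each element of $W_n$ can be encoded as a permutation of $\{1,\dots,n\}$ whose cycles carry a weight in $\Z/r\Z$ (the total power of $t$ accumulated along the cycle), and the conjugacy class is determined by recording, for each $k\in\Z/r\Z$, the partition formed by the lengths of the cycles of weight $k$. This gives a bijection between conjugacy classes of $W_n$ and $r$-multipartitions $\blam$ of $n$; call the class $C_\blam$. For each $\blam$ I would fix an explicit representative $w_\blam$, built as a product of pairwise commuting blocks of the form $t^{a}s_{j+1}s_{j+2}\cdots s_{j+\ell-1}$ (a cycle of length $\ell$ and weight $a$ living inside a suitably shifted parabolic $W_\ell$). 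A direct length count shows that $w_\blam$ achieves the minimum length on $C_\blam$, so it is enough to connect any $w\in C_\blam$ to $w_\blam$ by a sequence of arrows $w\to w'$.

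The main reduction is induction on $n$. Given $w\in C_\blam$, look at the $w$-orbit of the largest index $n$ together with its $\Z/r\Z$-weight $k$. The goal is to produce a chain of cyclic shifts transforming $w$ into an element of the form $w_1\cdot(t^{k}s_{j}s_{j+1}\cdots s_{n-1})$, where the tail is the standard cycle through $n$ (of length $n-j+1$ and weight $k$) and $w_1$ lies in the parabolic $W_{j-1}\times 1\subseteq W_n$ which commutes with the tail. To achieve this I would repeatedly conjugate by elements of $S$ (using $t$ to cycle a $t$-power around the orbit and $s_i$'s to move the points of the orbit into consecutive positions $j,j+1,\dots,n$), choosing at each stage a conjugation realising the condition \eqref{2possibi0}, so that the move is a genuine arrow $w\overset{s}{\to}w'$ with $\ell(w')\le\ell(w)$. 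Once $w$ has been standardised in this way, the tail is a minimal-length representative of its own (singleton) component of $\blam$ and commutes with everything in $W_{j-1}$, so induction on $n$ applied to $w_1$ inside $W_{j-1}$ closes the argument.

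The hard part will be proving the length-non-increasing standardisation step. For Weyl groups ($r\le 2$) this is Geck–Pfeiffer's main technical input and rests on Matsumoto's theorem and the exchange/deletion conditions. For $r>2$ the naive length function is no longer a Coxeter length: the product $T_{x_1}\cdots T_{x_k}$ depends on the chosen reduced expression, and the braid relation $ts_1ts_1=s_1ts_1t$ is not a Coxeter relation, so the exchange condition is not directly available. Instead, one must verify by explicit calculation in the faithful action of $W_n$ on $\{1,\dots,n\}\times\Z/r\Z$ a ``local descent'' statement: if no admissible conjugation by an element of $S$ strictly decreases the length, then the cycle through $n$ is already in standard position. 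The two clauses in \eqref{2possibi0}, namely $\ell(sw)<\ell(w)$ versus $\ell(ws^{-1})<\ell(w)$, must be tracked separately, because the mixed braid relation allows a $t$-power adjacent to $s_1$ to be pushed either left or right at different cost in length, and it is precisely this asymmetry that forces one to work with the weaker notion of arrow rather than with ordinary strong/weak conjugation of Coxeter groups. Once this local descent is established, the induction carries through and Theorem \ref{mainthm1} follows.
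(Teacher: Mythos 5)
Your high-level plan — peel off the cycle through $n$, standardise it to a ``tail'' block, and induct — is essentially the paper's Step~1 (Proposition~\ref{reduce to composition}), which uses the double-coset normal form (Proposition~\ref{canonicalform}) and Corollary~\ref{cancel1} to conjugate $w$ down to a product $d_1\cdots d_n=w_{\lambda,\epsilon}$ of $W_{i-1}$-shifted blocks. Up to that point you are on the right track.

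The genuine gap is that after the induction you will reach \emph{some} block form $w_{\lambda,\epsilon}$, but you have not shown this is minimal in its conjugacy class, and in general it is not. The length of $w_{\lambda,\epsilon}=\prod_i w_{\lambda,\epsilon,i}$ depends on the \emph{order} of the blocks: a weighted block sitting at position $r_i=\lambda_1+\cdots+\lambda_{i-1}$ contributes $2r_i$ to the length because its $t$-power must be conjugated out to position~1 via $s'_{r_i,\epsilon_i}=s_{r_i}\cdots s_1 t^{\epsilon_i}s_1\cdots s_{r_i}$. Consequently $\ell(w_{\lambda,\epsilon})$ is minimised only when all weighted blocks precede all unweighted blocks and, among the weighted blocks, the shorter ones come first (an opposite partition). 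Your induction, which freezes the cycle through $n$ as the last block regardless of its weight, cannot produce this ordering; if the cycle through $n$ carries a nonzero weight, the tail sits at $r_i=n-\ell(\text{tail})$ and is badly placed. The paper handles this with a second stage (Step~2 of Theorem~\ref{minimal length elements}), built on Lemma~\ref{r reduce}, which shows how to swap adjacent blocks of types (weighted, unweighted), (weighted $m$, weighted $k$ with $m>k$), and (weighted $m$, weighted $m$ with out-of-order colours) by a sequence of legitimate arrows $\overset{s_i}{\rightarrow}$; you have nothing playing this role.

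Two further points. First, you assert ``a direct length count shows that $w_\blam$ achieves the minimum length on $C_\blam$''; the paper does \emph{not} verify this by direct count but by a counting/bijection argument (Step~3, using the bijection between $\CP_n$, $\P_{r,n}$, and ${\rm Cl}(W_n)$): every element reduces to some $w_\alpha$, these $w_\alpha$ have lengths matched by $w_{\beta_C}$ with $\beta_C\in\CP_n$, and since the $w_{\beta_C}$ are in bijection with the conjugacy classes, each must be minimal. A direct verification would require an independent formula for $\min_{w\in C}\ell(w)$, which the paper does not supply and which is nontrivial for the naive length function. Second, you correctly flag that the local descent is ``the hard part'' because neither Matsumoto's theorem nor the exchange condition is available for $r>2$, but flagging a difficulty is not resolving it: the substantive content of Theorem~\ref{mainthm1} lies precisely in Corollary~\ref{cancel1} (the double-coset bookkeeping that makes each conjugation an admissible arrow) and Lemma~\ref{r reduce} (the block-reordering moves), and your proposal leaves both of these as open claims.
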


Note that here we use the naive length function $\ell(-)$ for $W_n$ defined by the length of reduced expression in terms of its defining generators. The above generalization of Geck and Pfeiffer's result to the complex reflection group case is quite subtle and nontrivial, mainly due to the fact that the naive length function $\ell(-)$ for $W_n$ does not behave well with respect to the action of $W_n$ on the generalized root system when $W_n$ is not a Weyl group. In particular, Deletion Condition and Exchange Condition do not hold with respect to the naive length function $\ell(-)$ for $W_n$. Note also that $t^{-1}=t^{r-1}\notin S$ whenever $r>2$.

Let $R$ be a commutative ring, $\xi\in R^\times$ and ${\mathbf Q}:=(Q_{1}, \cdots, Q_{r})\in R^r$. The non-degenerate cyclotomic Hecke algebras $\HH_{n,R}$ of type $G(r,1,n)$ were first introduced in \cite[Definition 3.1]{AK}, \cite[Definition 4.1]{BM:cyc} and \cite[before Proposition 3.2]{C} as certain deformations of the group ring $R[W_n]$. They play important roles in the modular representation theory of finite groups of Lie type over fields of non-defining characteristic. By definition, $\HH_{n,R}=\HH_{n,R}(\xi,{\mathbf Q})$ is the unital associative $R$-algebra with generators $T_{0}, T_{1}, \cdots, T_{n-1}$ that are subject to the following relations:
$$
\begin{aligned}
	&\left(T_{0}-Q_{1}\right) \cdots\left(T_{0}-Q_{r}\right)=0, \quad T_{0} T_{1} T_{0} T_{1}=T_{1} T_{0} T_{1} T_{0}; \\
	&\left(T_{i}-\xi \right)\left(T_{i}+1\right)=0, \quad \forall\, 1 \leq i \leq n-1; \\
	&T_{i} T_{j}=T_{j} T_{i},\quad  \forall\, 0 \leq i<j-1<n-1; \\
	&T_{i} T_{i+1} T_{i}=T_{i+1} T_{i} T_{i+1}, \quad \forall\, 1 \leq i<n-1.
\end{aligned}
$$
We call $\xi$ and $Q_{1}, \cdots, Q_{r}$ the Hecke parameter and the cyclotomic parameters of $\HH_{n,R}$ respectively. These algebras include the Iwahori-Hecke algebras associated to the Weyl groups of types $A_{n-1}$, $B_n$ as special cases (i.e., $r=1$ and $r=2$ cases).

For any $R$-algebra $A$, we define $\Tr(A):=A/[A,A]$, and call it the cocenter of $A$, where $[A,A]$ denotes the $R$-submodule of $A$ spanned by $ab-ba$ for all $a,b\in A$. In this paper, we are mainly interested in the cocenter of the cyclotomic Hecke algebra $\HH_{n,R}$ over an arbitrary commutative domain $R$.

Note that the Matsumoto theory for finite Coxeter groups is not applicable to $W_n$ anymore, the product $T_{x_1}\cdots T_{x_k}$ usually does depend on the choice of the reduced expression $x_1\cdots x_k$ of $w\in W_n$ instead of only on $w$. We do not have a uniquely well-defined standard basis for $\HH_{n,R}$ as in the finite Coxeter groups case. Let ${\rm{Cl}}(W_n)$ be the set of conjugacy classes of $W_n$. The following theorem is the second main result of this paper. It gives some integral bases for the cocenter $\Tr(\HH_{n,R})$ of the cyclotomic Hecke algebra $\HH_{n,R}$ and shows that the center is stable under base change.

\begin{thm}\label{mainthm2} Let $R$ be a commutative domain and $\xi, Q_1,\cdots,Q_r\in R^\times$.
	
	1) For each conjugacy class $C$ of $W_n$, we arbitrarily choose an element $w_C\in C_{\min}$ and fix a reduced expression $x_1\cdots x_k$ of $w_{C}$, and define
	$T_{w_{C}}:=T_{x_1}\cdots T_{x_k}$. Then the following set \begin{equation}\label{bas0}
		\bigl\{T_{w_{C}}+[\HH_{n,R},\HH_{n,R}]\bigm|C\in{\rm{Cl}}(W_n)\bigr\}
	\end{equation}
	forms an $R$-basis of the cocenter $\Tr(\HH_{n,R})$. In particular, the cocenter $\Tr(\HH_{n,R})$ is a free $R$-module of rank $|\mathscr{P}_{r,n}|$, where $\mathscr{P}_{r,n}$ is the set of $r$-partitions of $n$.
	
	2) The center $Z(\HH_{n,R})$ is a free $R$-module of rank $|\mathscr{P}_{r,n}|$. Moreover, for any commutative domain $R'$ which is an $R$-algebra, the following canonical map $$
	R'\otimes_{R}Z(\HH_{n,R})\rightarrow Z(\HH_{n,R'})
	$$
	is an $R'$-module isomorphism.

In particular, if $R=K$ is a field, then the dimensions of both the cocenter $\Tr(\HH_{n,K})$ and the center $Z(\HH_{n,K})$ are independent of the characteristic of the ground field $K$, and the Hecke parameters and cyclotomic parameters of $\HH_{n,K}$.
\end{thm}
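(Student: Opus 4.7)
The plan is to prove part (1) on the cocenter first, then derive part (2) via the symmetric algebra structure of $\HH_{n,R}$. The strategy sandwiches $\dim \Tr(\HH_{n,K})$ between an upper bound from the universal spanning set of Theorem~\ref{span} and the lower bound $\dim Z(\HH_{n,\K})=|\mathscr{P}_{r,n}|$ coming from the Ariki-Koike classification in the generic semisimple case, and transfers these equalities between center and cocenter using the Malle-Mathas symmetrizing form.

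For part (1), Theorem~\ref{span} (to be proved by adapting the conjugacy-class reduction of Theorem~\ref{mainthm1} to the Hecke algebra) yields a surjective $R$-linear map
\[
R^{|\mathscr{P}_{r,n}|}\twoheadrightarrow \Tr(\HH_{n,R}),\qquad e_C\mapsto T_{w_C}+[\HH_{n,R},\HH_{n,R}],
\]
giving $\dim_{K}\Tr(\HH_{n,K})\le |\mathscr{P}_{r,n}|$ over any field $K$. Over the generic field $\K=\mathbb{Q}(\hat\xi,\hat Q_1,\dots,\hat Q_r)$ (indeterminate parameters), $\HH_{n,\K}$ is split semisimple with $|\mathscr{P}_{r,n}|$ irreducibles (Ariki-Koike), so $\dim_{\K}Z(\HH_{n,\K})=|\mathscr{P}_{r,n}|$. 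Because $\HH_{n,R}$ is a symmetric $R$-algebra (Malle-Mathas) with the perfect pairing $(a,b)\mapsto \tau(ab)$, the general identity $Z(A)=[A,A]^{\perp}$ forces $\dim Z = \dim \Tr$ over any field, and hence $\dim_{\K}\Tr(\HH_{n,\K}) = |\mathscr{P}_{r,n}|$, so the spanning set is a $\K$-basis. To upgrade to arbitrary coefficients, I would work over $R_0:=\mathbb{Z}[\hat\xi^{\pm1},\hat Q_1^{\pm1},\dots,\hat Q_r^{\pm1}]$: the surjection $R_0^{|\mathscr{P}_{r,n}|}\twoheadrightarrow \Tr(\HH_{n,R_0})$ becomes an isomorphism after $\otimes_{R_0}\K$, so its kernel is $R_0$-torsion and vanishes inside the torsion-free module $R_0^{|\mathscr{P}_{r,n}|}$. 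Thus $\Tr(\HH_{n,R_0})$ is $R_0$-free of rank $|\mathscr{P}_{r,n}|$, and the base-change identity $\Tr(\HH_{n,R})=R\otimes_{R_0}\Tr(\HH_{n,R_0})$ follows from right-exactness of $R\otimes_{R_0}(-)$ applied to the defining presentation of the cocenter, completing (1).

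For part (2), the perfect pairing refines to an $R$-module isomorphism $Z(\HH_{n,R})=[\HH_{n,R},\HH_{n,R}]^{\perp}\cong \Hom_R(\Tr(\HH_{n,R}),R)$; since the cocenter is finitely generated free of rank $|\mathscr{P}_{r,n}|$ by (1), so is the center, and base change of $\Hom$ out of a finitely generated free module yields the claimed isomorphism $R'\otimes_R Z(\HH_{n,R})\cong Z(\HH_{n,R'})$. The main obstacle will be Theorem~\ref{span}: Matsumoto's theorem fails for $r>2$ and the naive length function satisfies neither the exchange nor the deletion condition, so different reduced expressions of the same $w_C$ can produce genuinely distinct elements $T_{w_C}\in\HH_{n,R}$. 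Controlling the differences between these choices modulo $[\HH_{n,R},\HH_{n,R}]$ along the reduction moves produced by Theorem~\ref{mainthm1} will require careful bookkeeping around the braid relation $T_0T_1T_0T_1=T_1T_0T_1T_0$, which is the only feature of the presentation that distinguishes $W_n$ from a Coxeter group.
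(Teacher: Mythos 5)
Your proposal is correct, and its overall strategy coincides with the paper's: use Theorem~\ref{span} for an upper bound on $\dim\Tr$, the split semisimple generic case for the exact count $|\mathscr{P}_{r,n}|$, the Malle--Mathas symmetrizing form and the isomorphism $Z(A)\cong\Tr(A)^*$ to identify the two dimensions, and then commutative algebra to pass from fields to domains. Where you differ is in the last step. You specialize from the universal Laurent ring $R_0=\Z[\hat\xi^{\pm1},\hat Q_1^{\pm1},\dots,\hat Q_r^{\pm1}]$, showing the kernel of the spanning surjection $R_0^{|\mathscr{P}_{r,n}|}\twoheadrightarrow\Tr(\HH_{n,R_0})$ is torsion (hence zero) because it vanishes over $\operatorname{Frac}(R_0)$, and then you obtain both freeness and the base-change isomorphism in one stroke via right-exactness of $R\otimes_{R_0}(-)$ applied to the cokernel presentation of the cocenter. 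The paper instead fixes an arbitrary field $K$, deforms via $\O=K[x]_{(x)}$ and $\K=K(x)$, proves $\dim Z(\HH_{n,K})\ge\dim Z(\HH_{n,\K})=|\mathscr{P}_{r,n}|$ by a coefficient-matrix rank-semicontinuity argument (Lemma~\ref{lower}(2)), and deduces $R$-linear independence of the spanning set by embedding $\HH_{n,R}$ into $\HH_{n,F}$ for $F=\operatorname{Frac}(R)$. Your version dispenses with the explicit rank lemma and is arguably cleaner; the paper's version has the incidental advantage of re-using the deformation $\O=K[x]_{(x)}$ that is already needed in Section~5. One small point you should make explicit: your construction proves the basis claim for the particular $T_{w_C}$ defined over $R_0$, while the theorem allows an arbitrary choice of reduced expression over each $R$; this is repaired by noting that once $\Tr(\HH_{n,R})$ is known to be free of rank $|\mathscr{P}_{r,n}|$, any spanning set of that cardinality (which Theorem~\ref{span} supplies for every choice) is automatically a basis, since a surjective endomorphism of a finitely generated module over a commutative ring is injective. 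In both treatments the genuine difficulty is Theorem~\ref{span} (and behind it Theorem~\ref{mainthm1}), which you correctly flag as the bottleneck; neither route avoids the bookkeeping around the failure of Matsumoto's theorem for $r>2$.
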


Let us briefly explain how we prove Theorem \ref{mainthm2}. Adapting a similar argument in Theorem \ref{mainthm1}, we show in Theorem \ref{span} that, over an arbitrary commutative unital ring $R$, $\{T_{w_C}|C\in {\rm{Cl}}(W_n)\}$ gives a $R$-spanning set for the cocenter $\Tr(\HH_{n,R})$ of $\HH_{n,R}$. This gives an upper bound for the dimension of the cocenter $\Tr(\HH_{n,R})$ when $R=K$ is a field. Then we use seminormal basis theory for the semisimple cyclotomic Hecke algebras $\HH_{n,\K}$ and the symmetric structure of $\HH_{n,R}$ to show that this upper bound is also the lower bound of the dimension of the center $Z(\HH_{n,K})$ and hence the dimension of the cocenter $\Tr(\HH_{n,K})$. The coincidence of the upper bound and the lower bound forces Theorem \ref{mainthm2} holds. Note that Brundan \cite{Brundan:degenCentre} has proved that the dimension of the center of the degenerate cyclotomic Hecke algebra of type $G(r,1,n)$ is independent of the characteristic of the ground field and its cyclotomic parameters by explicitly constructing an integral basis. Our argument applies equally well to prove a version of Theorem \ref{mainthm2} in the degenerate setting.

The content of the paper is organised as follows. In Section 2 we introduce some basic notions and fix some notations which will be used in later sections. We recall some preliminary known results on the non-degenerate cyclotomic Hecke algebras of type $G(r,1,n)$. In Section 3 we give a proof of our first main result Theorem \ref{mainthm1}. The whole Section 3 involves only complex reflection group theoretic discussion, but the main result will be used in the proof of Theorem \ref{mainthm2}. In Section 4 we give the proof of our second main result Theorem \ref{mainthm2}.  In Section 5 we give two applications of our main results in this paper.
The first application (Proposition \ref{apply2}) verifies Chavli-Pfeiffer's conjecture on the polynomial coefficient $g_{w,C}$ (\cite[Conjecture 3.7]{CP}) for the complex reflection group of type $G(r,1,n)$, while the second application (Theorem \ref{application2}) shows that both the cocenters and the centers of certain cyclotomic KLR algebras of affine type $A$ are independent of the characteristic of the ground field.

\bigskip
\centerline{Acknowledgements}
\bigskip

The research was supported by the National Natural Science Foundation of China (No. 12431002). The second author is partially supported by the Postdoctoral Fellowship Program of CPSF under Grant Number GZB20250717.
\bigskip

\section{Preliminary}

Let $R$ be a commutative (unital) ring. We use $R^\times$ to denote the set of units in $R$. Let $\HH_{n,R}$ be the non-degenerate cyclotomic Hecke algebra of type $G(r,1,n)$ (defined over $R$) with Hecke parameter $\xi\in R^\times$ and cyclotomic parameters $Q_1,\cdots,Q_r\in R$.

The {\bf Jucys-Murphy elements} of $\HH_{n,R}$ are defined as follows:
\begin{equation}
\mL_{m}:=\xi^{1-m} T_{m-1} \cdots T_{1} T_{0} T_{1} \cdots T_{m-1},\quad m=1,2, \cdots, n.
\end{equation}
These elements commute with each other and any symmetric polynomial in $\mL_1,\cdots,\mL_n$ is central in $\HH_{n,R}$.

\begin{lem}\text{\rm (\cite[Theorem 3.10]{AK})}\label{stdBasis}
The elements in the following set
\begin{equation}\label{basis1}
\bigl\{\mL_1^{c_1}\cdots \mL_n^{c_n}T_w\bigm|w\in\Sym_n, 0\leq c_i<r,\forall\,1\leq i\leq n\bigr\}
\end{equation}
give an $R$-basis of $\HH_{n,R}$.
\end{lem}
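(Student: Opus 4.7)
The plan is to prove the statement in two steps: establish spanning, then linear independence. Set $\mathcal{B} := \{\mL_1^{c_1}\cdots\mL_n^{c_n}T_w \mid w\in\Sym_n,\, 0\leq c_i<r\}$ and note that $|\mathcal{B}|=r^n\cdot n!=|W_n|$, so the two steps together identify $\HH_{n,R}$ as a free $R$-module of rank $|W_n|$ with $\mathcal{B}$ as a basis.

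\textbf{Spanning.} I would proceed by induction on $n$. The case $n=1$ is immediate from the cyclotomic relation $(T_0-Q_1)\cdots(T_0-Q_r)=0$, which expresses $T_0^r$ as an $R$-linear combination of $1,T_0,\ldots,T_0^{r-1}$. For the inductive step, the aim is to rewrite an arbitrary monomial in $T_0,\ldots,T_{n-1}$ as an $R$-linear combination of products $h\cdot\mL_n^c\cdot T_u$, where $h$ lies in the subalgebra $\HH_{n-1,R}$ generated by $T_0,\ldots,T_{n-2}$, $0\leq c<r$, and $u$ ranges over the $n$ standard right coset representatives of $\Sym_{n-1}$ in $\Sym_n$. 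The key ingredients are: (i) a slide identity of the form $\mL_n T_{n-1}=T_{n-1}\mL_{n-1}+(\xi-1)\mL_n$, which follows from $\mL_n=\xi^{-1}T_{n-1}\mL_{n-1}T_{n-1}$ and the quadratic relation $(T_{n-1}-\xi)(T_{n-1}+1)=0$, together with the fact that $\mL_n$ commutes with $T_i$ for $i<n-1$; (ii) the braid and commutation relations for reordering the $T_i$'s; (iii) a cyclotomic identity $\prod_{i=1}^r(\mL_n-Q_i)=0$, obtained by conjugating the cyclotomic relation for $T_0=\mL_1$ by $T_1\cdots T_{n-1}$ and invoking the invertibility of each $T_i$. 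Applying the induction hypothesis to $h$ then closes the argument and bounds the rank of $\HH_{n,R}$ from above by $r^n n!$.

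\textbf{Linear independence.} I would use a generic-to-specific argument. Form the universal cyclotomic Hecke algebra $\HH_{n,A}$ over $A := \Z[\boldsymbol{\xi}^{\pm 1},\boldsymbol{Q}_1,\ldots,\boldsymbol{Q}_r]$ with $\boldsymbol{\xi}$ and the $\boldsymbol{Q}_i$ treated as formal parameters. Specializing $\boldsymbol{\xi}\mapsto 1$ and $\boldsymbol{Q}_i\mapsto\zeta^{i-1}$ (with $\zeta$ a primitive $r$-th root of unity in $\Z[\zeta]$) turns the defining relations into those of the group algebra $\Z[\zeta][W_n]$, whose rank is exactly $r^n n!$, with the images of the elements of $\mathcal{B}$ exhausting $W_n$ via the semidirect product decomposition $W_n=(\Z/r\Z)^n\rtimes\Sym_n$. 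Combined with the spanning upper bound and upper semicontinuity of fiber dimensions across $\Spec A$, this forces $\HH_{n,A}$ to be free of rank $r^n n!$ over $A$ with $\mathcal{B}$ as an $A$-basis. Base-changing along the ring homomorphism $A\to R$ sending $\boldsymbol{\xi}\mapsto\xi$ and $\boldsymbol{Q}_i\mapsto Q_i$ then yields the desired $R$-basis of $\HH_{n,R}$.

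\textbf{Main obstacle.} The principal difficulty lies in the spanning step, specifically in establishing the cyclotomic identity for $\mL_n$. Since $W_n$ is not a Coxeter group when $r>2$, no length-function reduction is available, and one must directly exploit the braid-like relation $T_0T_1T_0T_1=T_1T_0T_1T_0$ to convert the cyclotomic relation for $T_0$ into one for $\mL_n$. Once that identity is in place, the remaining reductions in spanning are routine manipulations with the defining relations, and the generic specialization argument handles linear independence cleanly.
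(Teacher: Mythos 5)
The paper does not prove this lemma; it simply cites Ariki--Koike \cite[Theorem~3.10]{AK}, so there is no in-paper argument to compare against. Taken on its own terms, your sketch has a genuine gap in the spanning step, at precisely the point you flag as the main obstacle.

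The claimed cyclotomic identity $\prod_{i=1}^r(\mL_n-Q_i)=0$ is false for $n>1$. The element $\mL_n=\xi^{1-n}(T_{n-1}\cdots T_1)\,T_0\,(T_1\cdots T_{n-1})$ is \emph{not} a conjugate of $T_0$: the right-hand factor is $T_1\cdots T_{n-1}$, not $(T_{n-1}\cdots T_1)^{-1}$, and since $T_i^{-1}=\xi^{-1}T_i-(1-\xi^{-1})\neq T_i$ in general, these differ. Conjugating $\prod_i(T_0-Q_i)=0$ by $T_1\cdots T_{n-1}$ produces a relation for the conjugate $(T_{n-1}\cdots T_1)T_0(T_{n-1}\cdots T_1)^{-1}$, which is a different element of the algebra. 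You can see the failure concretely already with $r=1$, $n=2$: the cyclotomic relation forces $T_0=Q_1$, and then $\mL_2=\xi^{-1}T_1Q_1T_1=\xi^{-1}Q_1T_1^2=Q_1\bigl(1+\xi^{-1}(\xi-1)T_1\bigr)$, so $\mL_2-Q_1=Q_1\xi^{-1}(\xi-1)T_1\neq 0$ whenever $\xi\neq 1$. What is actually true, and what Ariki--Koike establish, is the weaker statement that $\mL_n^{\,r}$ lies in the $R$-span of the monomials $\mL_1^{c_1}\cdots\mL_n^{c_n}T_w$ with $c_n<r$; but the coefficients of this expansion involve $\mL_1,\dots,\mL_{n-1}$ and $T_w$ with $w\neq 1$, so it is not a polynomial identity in $\mL_n$ alone. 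Establishing this reduction is the real content of the spanning argument and cannot be shortcut by conjugation. Your slide identity $\mL_nT_{n-1}=T_{n-1}\mL_{n-1}+(\xi-1)\mL_n$ is correct and is one of the ingredients, but it does not by itself bound the exponent of $\mL_n$.

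There is also a smaller gap in the linear-independence step. Upper semicontinuity of the fiber dimension gives that the generic-fiber dimension of $\HH_{n,A}$ is $\leq$ the special-fiber dimension $r^n n!$ --- the same bound the spanning argument already provides --- not the equality you need. Concretely, if $K$ denotes the kernel of the surjection $A^{r^n n!}\twoheadrightarrow\HH_{n,A}$, the specialization to $\Z[\zeta][W_n]$ only shows $K\subseteq\mathfrak{m}\,A^{r^n n!}$ for the specialization ideal $\mathfrak{m}$, which does not force $K=0$. The standard way to close this (and what Ariki--Koike do) is to construct an explicit faithful $\HH_{n,R}$-module that is free of rank $r^n n!$ on which the proposed basis elements act by visibly independent operators; alternatively one can first prove the result over $\Z[\boldsymbol{\xi}^{\pm1},\boldsymbol{Q}_1,\dots,\boldsymbol{Q}_r]$ by such a module-theoretic argument and then base-change, which is the clean version of what you were aiming for.
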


\begin{dfn}\label{ssf} For any $w\in \Sym_n$ and integers $0\leq c_1,c_2,\cdots,c_n<r$, we define
$$
\tau_{R}(\mL_1^{c_1}\cdots \mL_n^{c_n}T_w):=\begin{cases}
1, &\text{if $w=1$ and $c_1=\cdots=c_n=0$;}\\
0, &\text{otherwise.}
\end{cases}
$$
We extend $\tau_{R}$ linearly to an $R$-linear function on $\HH_{n,R}$.
\end{dfn}

Let $A$ be an $R$-algebra which is a free $R$-module of finite rank. Recall that $A$ is called a symmetric $R$-algebra if there is an $R$-linear function $\tau: A\rightarrow R$ such that  $\tau(hh')=\tau(h'h), \forall\,h,h'\in A$ and $\tau$ is non-degenerate (i.e., the morphism $\hat{\tau}: A \to \Hom_R(A,R), a\mapsto (a'\mapsto \tau (a'a))$ is an $R$-module isomorphism), see \cite[Definition 7.1.1]{GP}. In this case, $\tau$ is called a symmetrizing form on $A$. It is clear that an $R$-linear function $\tau: A\rightarrow R$ is non-degenerate if and only if there is a pair of $R$-bases $\mathcal{B},\mathcal{B}'$ of $A$ such that the determinant of the matrix $(\tau(bb'))_{b\in \mathcal{B},b'\in\mathcal{B}'}$ is a unit in $R$. If $A$ is a symmetric algebra over $R$, then it follows from \cite[Lemma 7.1.7]{GP2} that there is an $R$-module isomorphism: \begin{equation}\label{ztr}
Z(A)\cong(\Tr(A))^*:=\Hom_R(\Tr(A),R).
\end{equation}
Note that, in general, we do not know whether $\Tr(A)$ is isomorphic to $(Z(A))^*$ or not when $R$ is not a field because $\Tr(A)$ may not be a free $R$-module.

\begin{lem}\text{\rm (\cite{MM})} Suppose that  $Q_1,\cdots,Q_r\in R^\times$. Then $\tau_{R}$ is a symmetrizing form on $\HH_{n,R}$ which makes $\HH_{n,R}$ into a symmetric algebra over $R$.
\end{lem}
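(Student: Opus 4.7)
The plan is to verify both conditions for $\tau_R$ to be a symmetrizing form on $\HH_{n,R}$: the trace property $\tau_R(ab)=\tau_R(ba)$ and the non-degeneracy of the induced pairing $(a,b)\mapsto\tau_R(ab)$. I would first observe that the hypothesis $\xi,Q_1,\ldots,Q_r\in R^\times$ forces every generator $T_i$ to be invertible in $\HH_{n,R}$: for $i\geq 1$ this follows immediately from $(T_i-\xi)(T_i+1)=0$, while for $i=0$ the cyclotomic relation $(T_0-Q_1)\cdots(T_0-Q_r)=0$ has constant term $(-1)^r Q_1\cdots Q_r\in R^\times$, so $T_0^{-1}$ exists as a polynomial in $T_0$. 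Hence every Jucys--Murphy element $\mL_m=\xi^{1-m}T_{m-1}\cdots T_1T_0T_1\cdots T_{m-1}$ is a unit.

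For the trace property, I would reduce by linearity to checking $\tau_R([h,g])=0$ for every generator $g\in\{T_0,T_1,\ldots,T_{n-1}\}$ and every basis element $h=\mL_1^{c_1}\cdots\mL_n^{c_n}T_w$ from Lemma \ref{stdBasis}. Using the defining relations to straighten both $hg$ and $gh$ into linear combinations of the standard basis, one tracks the identity coefficient of each. The subcase with $g\in\{T_1,\ldots,T_{n-1}\}$ reduces essentially to the classical Iwahori--Hecke calculation for $\Sym_n$ (a length argument: multiplication by $T_i$ changes the length of the $T_w$-part by $\pm 1$ and cannot produce the identity term unless balanced symmetrically in $gh$), while the subcase $g=T_0$ is controlled by the braid relation $T_0T_1T_0T_1=T_1T_0T_1T_0$ and the commutations $T_0T_i=T_iT_0$ for $i\geq 2$.

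For non-degeneracy, the plan is to exhibit an explicit $R$-basis dual to $\mathcal{B}=\{\mL_1^{c_1}\cdots\mL_n^{c_n}T_w\}$ under the bilinear form. For each $b=\mL_1^{c_1}\cdots\mL_n^{c_n}T_w\in\mathcal{B}$, define the candidate dual element
$$
b^* := T_{w^{-1}}\,\mL_n^{r-1-c_n}\cdots\mL_1^{r-1-c_1},
$$
and order $\mathcal{B}$ first by the length $\ell(w)$ and then lexicographically on $(c_1,\ldots,c_n)$. The key claim is that the Gram matrix $M=\bigl(\tau_R(b\cdot b'^*)\bigr)_{b,b'\in\mathcal{B}}$ is unitriangular with respect to this ordering: the diagonal entry $\tau_R(b\cdot b^*)$ is a product of $\xi^{\ell(w)}$ (arising from the straightening of $T_wT_{w^{-1}}$) and a unit obtained from iteratively reducing $\mL_1^{r-1}\cdots\mL_n^{r-1}$ via the cyclotomic relation $(T_0-Q_1)\cdots(T_0-Q_r)=0$, while off-diagonal entries vanish by a combined length-and-degree argument ($T_wT_{v^{-1}}$ contains no $T_1$-term when $v\neq w$, and surplus Jucys--Murphy powers cannot be cancelled). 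This forces $\det M\in R^\times$, yielding non-degeneracy.

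The main obstacle is the careful verification of the triangular structure, since the Jucys--Murphy elements do not commute with the $T_i$ for $i\geq 1$ and the product $T_wT_{w^{-1}}\cdot\mL_n^{r-1-c_n}\cdots\mL_1^{r-1-c_1}$ must be straightened before the identity coefficient can be read off. A robust alternative, which circumvents this explicit bookkeeping, is to first establish non-degeneracy over the generic parameter ring $A=\Z[\xi^{\pm 1},Q_1^{\pm 1},\ldots,Q_r^{\pm 1}]$: by the Ariki--Koike theory $\HH_{n,\mathrm{Frac}(A)}$ is split semisimple, so decomposes as a direct sum $\bigoplus_{\blam}M_{d_\blam}(\mathrm{Frac}(A))$ of matrix algebras indexed by $r$-partitions, and any symmetrizing form is a $\mathrm{Frac}(A)$-linear combination of the matrix-block traces with coefficients (the reciprocals of the Schur elements) that can be shown to be non-zero. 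Non-degeneracy then descends to arbitrary $R$ via the freeness of the basis in Lemma \ref{stdBasis} and the integrality of $\tau_R$.
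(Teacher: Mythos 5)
The paper offers no proof here; it simply cites Malle--Mathas, so the only thing to assess is whether your outline would in fact close the gap.

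Your explicit--dual--basis argument has a concrete error. Take already $n=1$ and $r\geq 2$: the basis is $\{1,T_0,\dots,T_0^{r-1}\}$ and your candidate dual of $b=T_0^c$ is $b^{*}=T_0^{r-1-c}$, so the ``diagonal'' entry is $\tau_R(b\,b^{*})=\tau_R(T_0^{r-1})=0$, not a unit. Moreover off-diagonal entries of your Gram matrix $\bigl(\tau_R(b\,b'^{*})\bigr)$ need not vanish: e.g.\ for $r=3$, $\tau_R(T_0^{2}\cdot T_0^{2})=\tau_R(T_0^{4})=e_1e_3\neq 0$. So the matrix is invertible in these examples, but it is neither unitriangular in your ordering nor does its invertibility follow from the length/degree heuristic you describe; your argument as stated breaks down before any of the genuinely hard bookkeeping (the non-commutation of the $\mL_m$ with the $T_i$) even comes into play. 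The cyclotomic relation first intervenes at exponent $r$, not $r-1$, so $\tau_R$ applied to $\mL_m^{r-1}$ is simply $0$, not ``a unit obtained from iteratively reducing.''

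Your ``robust alternative'' also has a gap in the descent step. Semisimplicity over $\mathrm{Frac}(A)$ with $A=\Z[\xi^{\pm1},Q_i^{\pm1}]$ gives you that the Gram determinant $\Delta\in A$ of $\tau$ with respect to the standard basis is \emph{nonzero} in $\mathrm{Frac}(A)$, but that is far from sufficient: to conclude non-degeneracy for every $R$ with $\xi,Q_i\in R^{\times}$ you need $\Delta$ to be a \emph{unit} in $A$ (i.e.\ $\pm$ a monomial in $\xi,Q_i$), and then specialize. ``Integrality of $\tau_R$ and freeness of the basis'' do not by themselves bound $\Delta$ away from non-monomial factors --- indeed the Schur elements $s_{\blam}$, which enter the matrix-unit decomposition, involve factors such as $\xi^kQ_l-Q_{l'}$ that are not units in $A$, so one must show these cancel in $\Delta$. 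This cancellation is the actual content of Malle--Mathas's argument and your sketch does not address it. The trace property, by contrast, is plausibly outlined (reduce to commutators with generators), though the interaction of $T_i$ with the Jucys--Murphy part of the basis is more delicate than ``changes the length of the $T_w$-part by $\pm1$'' suggests.
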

Henceforth, we shall call $\tau_{R}$ {\bf the standard symmetrizing form} on $\HH_{n,R}$.

\begin{lem}[\text{\rm \cite[Main Theorem]{A1}}]\label{ss} Let $R=K$ be a field. The  cyclotomic Hecke algebra $\HH_{n,K}$ is semisimple if and only if $$
\Bigl(\prod_{k=1}^{n}(1+\xi+\cdots+\xi^{k-1})\Bigr)\Bigl(\prod_{\substack{1\leq l<l'\leq r\\ -n<k<n}}\bigl(\xi^kQ_l-Q_{l'}\bigr)\Bigr)\in K^\times .
$$
In that case, it is split semisimple.
\end{lem}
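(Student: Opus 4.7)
The plan is to exploit the fact that $\HH_{n,K}$ is a symmetric $K$-algebra (with standard form $\tau_K$) and to compute its Schur elements by first passing to a generic field; this reduces semisimplicity to the non-vanishing of a finite list of elements which one can then factor explicitly.

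First, I would work in the generic setting. Let $A := \Z[q^{\pm 1}, u_1, \ldots, u_r]$ with $q, u_1, \ldots, u_r$ algebraically independent indeterminates, and let $F := \mathrm{Frac}(A)$. Consider the generic cyclotomic Hecke algebra $\HH^{\mathrm{gen}}_{n,F}$ with Hecke parameter $q$ and cyclotomic parameters $u_1,\dots,u_r$. Young's seminormal basis $\{f_{\t\t'}\}$ indexed by pairs of standard $r$-tableaux of size $n$ is well defined over $F$ (all denominators, which are differences of residues, are nonzero in $F$), giving a Wedderburn decomposition
\begin{equation*}
\HH^{\mathrm{gen}}_{n,F}\;\cong\;\bigoplus_{\blam\in\mathscr{P}_{r,n}}\mathrm{Mat}_{d_\blam}(F).
\end{equation*}
In particular $\HH^{\mathrm{gen}}_{n,F}$ is split semisimple. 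The standard form $\tau_F$, being a symmetrizing form, then decomposes as $\tau_F=\sum_{\blam} s_\blam^{-1}\chi_\blam$ with Schur elements $s_\blam\in F^\times$ and Specht characters $\chi_\blam$. By the general theory of symmetric algebras (\cite[Ch.~7]{GP2}), the specialisation $\HH_{n,K}$ at $(q,u_1,\dots,u_r)\mapsto(\xi,Q_1,\dots,Q_r)$ is split semisimple if and only if every $s_\blam$ specialises to a nonzero element of $K$.

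Next, I would compute each $s_\blam$ explicitly. Using the seminormal idempotents one obtains a product formula expressing $s_\blam$ as a unit times
\begin{equation*}
\prod_{(i,j,l)\in[\blam]}\!\! h_\xi(i,j,l;\blam)\;\cdot\;\prod_{\substack{1\leq l<l'\leq r\\ -n<k<n}}\!\!\bigl(\xi^k Q_l-Q_{l'}\bigr)^{m(k,l,l';\blam)},
\end{equation*}
where the hook factors $h_\xi$ are products of quantum integers $[k]_\xi=1+\xi+\cdots+\xi^{k-1}$ with $1\le k\le n$, and $m(k,l,l';\blam)\in\Z^{\ge 0}$. Letting $\blam$ range over $\mathscr{P}_{r,n}$, every irreducible factor appearing in the stated discriminant arises as a factor of at least one $s_\blam$, while no extraneous irreducible factor ever occurs (the only obstructions in the seminormal construction are precisely coincidences of residues, encoded by $\xi^kQ_l=Q_{l'}$, and zero quantum integers $[k]_\xi=0$). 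Hence some specialised $s_\blam$ vanishes in $K$ if and only if the full displayed discriminant vanishes, which by the previous step gives the equivalence. Split semisimplicity then follows because we already work with absolutely irreducible Specht modules in the generic setting, whose matrix blocks survive specialisation intact.

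The main obstacle is the bookkeeping in the Schur element computation: one must verify both that \emph{every} factor $(\xi^kQ_l-Q_{l'})$ with $-n<k<n$, and every quantum integer $[k]_\xi$ with $k\le n$, appears as an irreducible factor of at least one $s_\blam$, and that no other prime factor intrudes. The ``if'' direction is then immediate from the Schur element criterion. For the ``only if'' direction, for each vanishing factor of the discriminant one exhibits an explicit $\blam$ whose Schur element acquires that factor: for a vanishing $[k]_\xi$ one chooses $\blam$ to be a single row of length $k$ in one component; for a vanishing $\xi^kQ_l-Q_{l'}$ one chooses $\blam$ placing boxes of equal residue in components $l$ and $l'$ with content difference $k$. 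The resulting $s_\blam$ then specialises to zero in $K$, forcing non-semisimplicity.
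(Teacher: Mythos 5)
The paper does not prove this lemma; it cites Ariki's original paper \cite{A1} and uses the statement as a black box, so there is no "paper's own proof" to line up against. What you have outlined is the modern route via Schur elements and the symmetric-algebra specialization theorem (Geck--Pfeiffer \cite[\S7.4]{GP2}), which is a perfectly respectable strategy for this result. The issue is that, as written, it is a plan rather than a proof: the step it identifies as "the main obstacle'' is exactly where the content lives, and you do not carry it out.

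Concretely, there are two genuine gaps. First, the product formula for $s_\blam$ --- a unit times a product of quantum hook lengths and a product of factors $\bigl(\xi^kQ_l-Q_{l'}\bigr)^{m(k,l,l';\blam)}$ --- is asserted, not derived or even cited. That formula is a nontrivial computation (carried out, e.g., in \cite{Ma} and \cite{CJ}, the latter of which the paper already references); without it, neither the "if'' nor the "only if'' direction follows, since the entire argument reduces to "some $s_\blam$ vanishes iff the displayed discriminant vanishes.'' You also need to verify both that every factor of the discriminant occurs in some $s_\blam$ and that no irreducible factor outside that list occurs, and the handwave "the only obstructions in the seminormal construction are coincidences of residues'' conflates residue coincidences within a component (which produce the $[k]_\xi$ factors) with those across components (which produce the $\xi^kQ_l-Q_{l'}$ factors) and does not rule out extraneous primes. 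Second, your witness multipartitions in the "only if'' sketch are not multipartitions of $n$: "a single row of length $k$ in one component'' has size $k<n$, so you must pad it out to an $r$-partition of $n$ and still exhibit the relevant factor in $s_\blam$, which again requires the explicit formula. Finally, the symmetric-algebra specialization theorem requires $\tau_K$ to be a nondegenerate symmetrizing form, i.e.\ $Q_1,\dots,Q_r\in K^\times$; the lemma as stated permits some $Q_i=0$ (where semisimplicity can still hold), so your argument would need a separate reduction to handle that case.
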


Let $d\in\N$. A composition of $d>0$ is a finite sequence $\rho=(\rho_1,\rho_2,\cdots,\rho_k)$ of positive integers which sums to $d$, we write $|\rho| = \sum_{j=1}^k\rho_j=d,\,\ell(\rho)=k$, and call $\ell(\rho)$ the length of $\rho$. By convention, we understand $\emptyset$ as a composition of $0$. An $r$-composition of $d$ is an ordered $r$-tuple $\blam=(\lam^{(1)},\cdots,\lam^{(r)})$ of compositions $\lam^{(k)}$ such that $\sum_{k=1}^{r}|\lam^{(k)}|=d$. A partition of $d$ is a composition $\lambda=(\lambda_1,\lambda_2,\cdots)$ of $d$ such that $\lam_1\geq\lam_2\geq\cdots$. We use $\mathcal{P}_d$ to denote the set of partitions of $d$. An $r$-partition of $d$ is an $r$-composition $\bm{\lambda}=\left(\lambda^{(1)}, \cdots, \lambda^{(r)}\right)$ of $d$ such that each $\lam^{(k)}$ is a partition.
Given a composition $\lambda = (\lambda_1 ,\lambda_2,\cdots )$ of $d$, we define its conjugate $\lambda' = (\lambda'_1 , \lambda'_2 ,\cdots)$ by $\lambda'_k = \# \{ j\geq 1 \mid \lambda_j \geq k \}$, which is a partition of $d$. For any $r$-composition $\blam=(\lam^{(1)},\cdots,\lam^{(r)})$ of $d$, we define its conjugate $\blam':=(\lam^{(r)'},\cdots,\lam^{(1)'})$, which is an $r$-partition of $d$.

We identify the $r$-partition $\bm{\lambda}$ with its Young diagram that is the set of boxes
$$[{\bm{\lambda}}]=\left\{(l, a, c) \mid 1 \leq c \leq \lambda_{a}^{(l)}, 1 \leq l \leq r\right\}.$$
For example, if $\bm{\lambda}=\left((2,1,1),(1,1),(2,1) \right)$ then
\begin{equation*}
[\bm{\lambda}] = \Bigg( \ydiagram{2,1,1},~ \ydiagram{1,1},~ \ydiagram{2,1} \Bigg).
\end{equation*}
The elements of $[\blam]$ are called nodes. Given two nodes $\alpha=(l,a,c), \alpha'=(l',a',c')$, we say that $\alpha'$ is below $\alpha$, or $\alpha$ is above $\alpha'$, if either $l'>l$ or $l'=l$ and $a'>a$.
A node $\alpha$ is called an addable node of an $r$-partition $\blam$ if $[\blam]\cup\{\alpha\}$ is again the Young diagram of an $r$-partition $\bmu$. In this case, we say that $\alpha$ is a removable node of $\bmu$.

We use $\P_{r,n}$ to denote the set of $r$-partitions of $n$. Then $\P_{r,n}$ becomes a poset ordered by dominance ``$\unrhd$'', where $\blam\unrhd\bmu$ if and only if
$$
\sum_{k=1}^{l-1}\left|\lambda^{(k)}\right|+\sum_{j=1}^{i} \lambda_{j}^{(l)} \geq \sum_{k=1}^{l-1}\left|\mu^{(k)}\right|+\sum_{j=1}^{i} \mu_{j}^{(l)},
$$
for any $1 \leq l \leq r$ and any $i \geq 1$. If $\bm{\lambda} \unrhd \bm{\mu}$ and $\bm{\lambda} \neq \bm{\mu}$, then we write $\bm{\lambda} \rhd \bm{\mu}$.

Let $\bm{\lambda}\in\Parts[r,n]$. A $\bm{\lambda}$-tableau is a bijective map $\t: [\bm{\lambda}] \mapsto \{ 1, 2,...,n\}$, for example,

\begin{equation*}
\t = \Bigg( \begin{ytableau}
            1 & 2 \\
            3  \\
            4
            \end{ytableau},~~
            \begin{ytableau}
            5\\
            6
            \end{ytableau},~~
            \begin{ytableau}
            7 & 8 \\
            9
            \end{ytableau} \Bigg)
\end{equation*}
is a $\bm{\lambda}$-tableau, where $\bm{\lambda}:=\left((2,1,1),(1,1),(2,1) \right)$. If $\t$ is a $\bm{\lambda}$-tableau, then we set $\Shape(\t):=\bm{\lambda}$, and we define $\mathfrak{t}'\in\Std(\blam')$ by
$\t'(l,a,c):=\t(r+1-\ell,c,a)$ and call $\t'$ the conjugate of $\t$.

A $\bm{\lambda}$-tableau is standard if its entries increase along each row and each column in each component.
Let $\Std(\bm{\lambda})$ be the set of standard $\blam$-tableaux and $\Std^2(\bm{\lambda}):=\{(\s, \t) \mid \s, \t \in \Std(\bm{\lambda})\}$.
We set $\Std^2(n):=\{(\s,\t)\mid (\s,\t)\in\Std^2(\blam),\blam\in\P_{r,n}\}$.

Let $\blam\in\Parts[r,n], \t\in\Std(\blam)$ and $1\leq m\leq n$. We use $\t_{\downarrow m}$ to denote the subtableau of $\t$ that contains the numbers $\{1, 2,...,m\}$. If $\t$ is a standard $\bm{\lambda}$-tableau then $\Shape(\t_{\downarrow m})$ is an $r$-partition for all $m \geq 0$.
We define $\s \unrhd \t$ if
and only if $$
\Shape( \s\!\downarrow_{m} ) \unrhd \Shape (\t\downarrow_m),\quad\forall\,1\leq m\leq n.
$$
If $\s \unrhd \t$ and $\s \neq \t$, then write $\s \rhd \t$. For any $(\u,\v), (\s,\t)\in\Std^2(n)$, we define $(\u,\v)\unrhd(\s,\t)$ if either $\Shape(\u)=\Shape(\v)\rhd\Shape(\s)=\Shape(\t)$, or $\Shape(\u)=\Shape(\v)=\Shape(\s)=\Shape(\t)$, $\u\unrhd\s$ and $\v\unrhd\t$. If $(\u,\v)\unrhd(\s,\t)$ and $(\u,\v)\neq(\s,\t)$, then we write $(\u,\v)\rhd(\s,\t)$.

Let $\t^{\bm{\lambda}}$ be the standard $\blam$-tableau which has the numbers $1, 2,\cdots,n$ entered in order from left to right along the rows of $\lambda^{(1)}$ and then $\lambda^{(2)}, \cdots, \lambda^{(r)}$. Similarly, let $\t_{\bm{\lambda}}$ be the standard $\blam$-tableau which has the numbers $1, 2, \cdots, n$ entered in order down the columns of $\lambda^{(r)}, \cdots, \lambda^{(1)}$. There is a natural right action of the symmetric group $\Sym_n$ on the set of $\lam$-tableaux. Given a standard $\bm{\lambda}$-tableau $\t$, we define $d(\t), d'(\t)\in \Sym_n$ such that $\t = \t^{\bm{\lambda}} d(\t)$ and $\t_\blam d'(\t)=\t$, and set $w_{\bm{\lambda}}:=d(\t_{\bm{\lambda}})$. For any
$\t\in\Std(\bm{\lambda})$, we have $\t^{\bm{\lambda}} \unrhd \t \unrhd \t_{\bm{\lambda}}$.  The Young subgroup $\Sym_\blam$ is defined
to be the subgroup of $\Sym_n$ consisting of elements which permute numbers in each row of $\tlam$.

Recall that the cyclotomic Hecke algebra $\HH_{n,R}$ is generated by $T_0,T_1,\cdots,T_{n-1}$ with Jucys-Murphy elements $\mL_1,\cdots,\mL_n$.

\begin{dfn}[\text{\rm cf. \cite{DJM}, \cite{Ma}}]\label{mnlam}
Let $\bm{\mu}\in \P_{r,n} $. We define
$$\begin{aligned}
\fm_{\tmu\tmu}&:=\left(\sum_{w \in \Sym_{\mu}} T_{w}\right)\left(\prod_{k=2}^{r} \prod_{m=1}^{\left|\mu^{(1)}\right|+\cdots+\left|\mu^{(k-1)}\right|}\left(\mL_{m}-Q_{k}\right)\right).
\end{aligned}$$
\end{dfn}

Let $\ast$ be the unique anti-involution of $\HH_{n,R}$ which fixes all its defining generators $T_0,T_1,\cdots,T_{n-1}$.

\begin{dfn}[{\cite{DJM}, \cite{Ma}, \cite[(3.3)]{HW}}]\label{nst1}
Let $\bm{\lambda}\in\P_{r,n}$. For any $\s,\t\in \Std(\bm{\lambda})$, we define
$$
\fm_{\s\t}:=\left(T_{d(\s)}\right)^{*} \fm_{\tlam\tlam} T_{d(\t)}.
$$
\end{dfn}

\begin{lem}[\cite{DJM, Ma}]\label{cellular1} The set $\{\fm_{\s \t} \mid \s, \t \in \Std(\bm{\lambda}), \bm{\lambda} \in \P_{r,n}\}$, together with the poset $(\P_{r,n},\unrhd)$ and the anti-involution ``$\ast$'', form a cellular basis of $\HH_{n,R}$ in the sense of \cite{GL}. 
\end{lem}

Sometimes in order to emphasize the ground ring $R$ we shall use the notation $\fm_{\s\t}^R$ instead of $\fm_{\s\t}$.


We now recall some basic results on the semisimple representation theory of the cyclotomic Hecke algebra of type $G(r,1,n)$. Let $x$ be an indeterminate over $R$ and $\K$ be a field which contains both $x$ and $R$. We define
$$\hat{\xi}:=x+\xi\in \K^\times,\quad \hat{Q}_k:=x^{kn}+Q_k,\,\,k=1,2,\cdots,r. $$
Applying (\ref{ss}), we can deduce that the non-degenerate cyclotomic Hecke algebra $\HH_{n,\K}:=\HH_{n,\K}(\hat{\xi};\hat{Q}_1,\cdots,\hat{Q}_r)$ is semisimple.

Let $\blam\in\P_{r,n}$. For any $\gamma=(l,a,b)\in[\blam]$, we define $$
\cont(\gamma):=\hat{Q}_{l}\hat{\xi}^{b-a}\in\K .
$$
For any $\t=(\t^{(1)},\cdots,\t^{(r)}) \in \Std(\blam)$ and $1\leq k\leq n$, if $\t^{-1}(k)=\gamma$ then we define
\begin{equation}\label{content}
\cont(\t)=(\cont(\t^{-1}(1)),\cdots,\cont(\t^{-1}(n))) .
\end{equation}

\begin{lem}\text{\rm (\cite[2.5]{Ma})}\label{sscont} Suppose that $\HH_{n,\K}=\HH_{n,\K}(\hat{\xi};\hat{Q}_1,\cdots,\hat{Q}_r)$ is semisimple. Let $\s\in\Std(\blam), \t\in\Std(\bmu)$, where $\blam,\bmu\in\P_{r,n}$. Then $\s=\t$ if and only if $\cont(\s)=\cont(\t)$.
\end{lem}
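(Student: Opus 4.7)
The \emph{only if} direction is trivial from the definition. For \emph{if}, I would induct on $n$. The base case $n=1$ is immediate from the semisimplicity hypothesis: if $\s$ and $\t$ each consist of a single box in components $c$ and $c'$ respectively, then $\cont(\s)=\cont(\t)$ gives $\hat Q_c=\hat Q_{c'}$, which by the factor $\hat\xi^0\hat Q_l-\hat Q_{l'}$ (the $k=0$ term) in Lemma \ref{ss} forces $c=c'$.

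For the inductive step, given $\cont(\s)=\cont(\t)$, the subtableaux $\s\!\downarrow_{n-1}$ and $\t\!\downarrow_{n-1}$ have matching content sequences, so by induction they coincide with a common standard tableau $\u$ of shape $\bnu\in\Parts[r,n-1]$. Writing $\alpha=(c,i,j)$ and $\alpha'=(c',i',j')$ for the addable nodes of $\bnu$ at which $n$ is placed in $\s$ and $\t$ respectively, the equality $c_n(\s)=c_n(\t)$ amounts to
$$
\hat\xi^{j-i}\hat Q_c=\hat\xi^{j'-i'}\hat Q_{c'}.
$$
The crux is to show that this equation forces $\alpha=\alpha'$, after which $\s=\t$ follows at once.

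Two cases arise. If $c\neq c'$, say $c<c'$, the relation rewrites as $\hat\xi^k\hat Q_c=\hat Q_{c'}$ with $k:=(j-i)-(j'-i')$. For an addable node of a component of size $m$, the content exponent lies in $[-m,m]$; since $|\nu^{(c)}|+|\nu^{(c')}|\leq n-1$, we obtain $|k|\leq n-1$, i.e.\ $-n<k<n$. Lemma \ref{ss} then renders $\hat\xi^k\hat Q_c-\hat Q_{c'}$ a unit in $\K$, contradicting the equation. If $c=c'$, then $\hat\xi^k=1$ with $k\neq 0$, because distinct addable nodes of a single partition have strictly distinct integer contents $j-i$ (they decrease strictly from top to bottom). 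The content spread within one component of size $m$ is bounded by the hook bound $\nu^{(c)}_1+\ell(\nu^{(c)})\leq m+1$, hence $1\leq |k|\leq n$; the factor $[|k|]_{\hat\xi}=1+\hat\xi+\cdots+\hat\xi^{|k|-1}$ in Lemma \ref{ss} then forces $\hat\xi^k\neq 1$, a contradiction.

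The only delicate point is calibrating the two bounds on $|k|$ in the two cases so that the ranges $-n<k<n$ (resp.\ $1\leq |k|\leq n$) prescribed in the semisimplicity condition of Lemma \ref{ss} are genuinely sufficient; once these bounds are in place, the semisimplicity hypothesis forces $\alpha=\alpha'$ and the induction closes.
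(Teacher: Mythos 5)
The paper cites this result without proof, so there is no in-paper argument to compare against; you are supplying a proof from scratch, and your inductive strategy and the calibration of the bounds on $|k|$ against Ariki's criterion are sound. The reduction to the placement of the $n$-th box, the case split on whether the added box lies in the same or a different component, the bound $|k|\leq|\nu^{(c)}|+|\nu^{(c')}|\leq n-1$ in the mixed case, and the hook bound $\nu^{(c)}_1+\ell(\nu^{(c)})\leq m+1$ giving $1\leq|k|\leq n$ in the same-component case, all check out.

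However, the step in the same-component case where you deduce $\hat\xi^k\neq 1$ from $[|k|]_{\hat\xi}\in\K^\times$ is not valid as stated: since $\hat\xi^{|k|}-1=(\hat\xi-1)[|k|]_{\hat\xi}$, that implication requires $\hat\xi\neq 1$. This is not merely cosmetic, because the lemma genuinely fails when $\hat\xi=1$. For instance take $r=1$, $n=2$, $\cha K=0$, $\hat\xi=1$: Ariki's product in Lemma \ref{ss} is $1\cdot 2\neq 0$, so $\HH_{2,\K}$ is semisimple, yet the unique standard tableau of shape $((2))$ and the unique standard tableau of shape $((1,1))$ both have content sequence $(\hat Q_1,\hat Q_1)$. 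So $\hat\xi\neq 1$ is a hidden hypothesis needed both for the statement and for your argument. In the context of this paper it is always satisfied ($\hat\xi=x+\xi$ with $1\neq\xi\in K^\times$), so the usage is safe, but you should state the assumption explicitly. You should also record, as you implicitly use, that the semisimplicity of $\HH_{n-1,\K}$ needed to run the induction is inherited from that of $\HH_{n,\K}$, since the defining product in Lemma \ref{ss} at level $n-1$ divides that at level $n$. With these two points made explicit, your proof is complete.
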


For each $1\leq k\leq n$, we define $C(k):=\left\{\cont(\t^{-1}(k)) \mid \t \in \Std(\bm{\lambda}), \bm{\lambda} \in \P_{r, n}\right\}$.

\begin{dfn}\text{\rm (\cite[Definition 2.4]{Ma})}\label{dfn:Ft}
Let $\bm{\lam} \in \P_{r, n}$ and $\t\in\Std(\bm{\lam})$. We define
$$
{F}_{\t}=\prod\limits^n\limits_{k=1}\prod\limits_{\substack{c\in C(k)\\c\neq\cont(\t^{-1}(k))}}\frac{\mL_k-c}{\cont(\t^{-1}(k))-c}.
$$
For any $\bm{\lam} \in \P_{r, n}$ and $\s,\t\in\Std(\bm{\lam})$, we define
$$
\mathfrak{f}_{\s\t}:={F}_\s \fm^{\K}_{\s\t}{F}_\t.
$$
\end{dfn}

\begin{lem}\text{\rm (\cite[2.6, 2.11]{Ma})}\label{fsemi} 1) For any $\s,\t\in\Std(\blam), \u,\v\in\Std(\bmu)$, where $\blam,\bmu\in\P_{r,n}$, we have $$\begin{aligned}
\mathfrak{f}_{\s\t}\mathfrak{f}_{\u\v}=\delta_{\t\u}\gamma_\t \mathfrak{f}_{\s\v},
\end{aligned}
$$
for some $\gamma_\t\in\K^\times$. Moreover, $F_\s=\mathfrak{f}_{\s\s}/\gamma_\s$.

2) For each $\blam\in\P_{r,n}$, ${F}_\blam:=\sum_{\u\in\Std(\blam)}{F}_\u$ is a central primitive idempotent of $\HH_{n,\K}$. Moreover, the set $\{{F}_\bmu|\bmu\in\P_{r,n}\}$ is a complete set of
pairwise orthogonal central primitive idempotents in $\HH_{n,\K}$.
\end{lem}
We shall call $\bigl\{\mathfrak{f}_{\s\t}\bigm|\s,\t\in\Std(\blam),\blam\in\Parts[r,n]\bigr\}$ {\it the seminormal basis} of $\HH_{n,\K}$. The following result was proved in \cite[Theorem 2.19]{Ma}. Here we give a second elementary proof.

For any two $n$-tuples $(a_1,\cdots,a_n), (b_1,\cdots,b_n)\in\K^n$, we define $$
(a_1,\cdots,a_n)\sim (b_1,\cdots,b_n)\Longleftrightarrow (a_1,\cdots,a_n)=\sigma(b_1,\cdots,b_n),\,\,\text{for some $\sigma\in\Sym_n$.}
$$

\begin{lem}\label{sscont2} Let $\blam,\bmu\in\P_{r,n}$. Then $\blam=\bmu$ if and only if
$\cont(\tlam)\sim \cont(\tmu)$.
\end{lem}

\begin{proof} Suppose that $\cont(\tlam)\sim \cont(\tmu)$. By Lemma \ref{ss}, we see that for any $1\leq i\neq j\leq r$, none of the nodes in $[\lam^{(i)}]$ has the same content with a node in $[\lam^{(j)}]$. Thus the assumption $\cont(\tlam)\sim \cont(\tmu)$ implies that for each $1\leq j\leq r$, $\cont(\t^{\lam^{(j)}})\sim \cont(\t^{\mu^{(j)}})$. Now let $1\leq j\leq r$. Lemma  \ref{ss} implies that two nodes in $[\lam^{(j)}]$ have the same contents if and only if they lie in the same diagonal. The same is true for $[\t^{\mu^{(j)}}]$. Note that the lengths of these diagonals uniquely determine the partitions $\lam^{(j)}$ and $\mu^{(j)}$. Thus we can conclude that $\lam^{(j)}=\mu^{(j)}$ for each $1\leq j\leq r$. Hence $\blam=\bmu$.
\end{proof}

\begin{lem}\text{\rm (\cite[Theorem 2.19]{Ma})}\label{Ksymme} For each $\blam\in\P_{r,n}$, $\mathcal{F}_\blam$ is equal to a symmetric $\K$-polynomial in $\mL_1,\cdots,\mL_n$. In particular, the center of $\HH_{n,\K}$ is the set of symmetric $\K$-polynomials in $\mL_1,\cdots,\mL_n$.
\end{lem}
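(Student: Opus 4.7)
The plan is to reduce to a Lagrange interpolation argument via the seminormal spectral decomposition. By Lemma \ref{fsemi}(5), $Z(\HH_{n,\K}) = \bigoplus_{\bmu \in \P_{r,n}} \K F_\bmu$, so it suffices to prove that each central primitive idempotent $F_\blam$ is a symmetric polynomial in $\mL_1, \ldots, \mL_n$. By Lemma \ref{fsemi}(2), for any $p \in \K[x_1, \ldots, x_n]$ we have $p(\mL_1, \ldots, \mL_n) f_{\s\t} = p(c_1(\s), \ldots, c_n(\s)) f_{\s\t}$; if $p$ is symmetric, this scalar depends only on the multiset $M_\bnu := \{c_1(\s), \ldots, c_n(\s)\}$, which in turn depends only on the shape $\bnu = \Shape(\s)$. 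Hence for symmetric $p$,
\[
p(\mL_1, \ldots, \mL_n) = \sum_{\bnu \in \P_{r,n}} p(M_\bnu) F_\bnu.
\]

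The key step is to prove that the content multisets $M_\blam$, $\blam \in \P_{r,n}$, are pairwise distinct. Granting this, by Newton's identities the images $(e_1(M_\blam), \ldots, e_n(M_\blam)) \in \K^n$ are pairwise distinct, so standard Lagrange interpolation produces for each $\blam$ a symmetric polynomial $p_\blam \in \K[x_1, \ldots, x_n]^{\Sym_n}$ with $p_\blam(M_\bnu) = \delta_{\blam, \bnu}$. Combined with the displayed formula this yields $p_\blam(\mL_1, \ldots, \mL_n) = F_\blam$, proving the first claim; the ``in particular'' statement then follows, since symmetric polynomials in the $\mL_i$'s are already known to lie in $Z(\HH_{n,\K})$ and we have just shown the $F_\blam$'s span the center.

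The main obstacle is therefore establishing the distinctness of the content multisets under the semisimplicity hypothesis of Lemma \ref{ss}. My approach is to use the conditions $\hat{\xi}^k \neq 1$ for $1 \leq k \leq n - 1$ and $\hat{\xi}^k \hat{Q}_l \neq \hat{Q}_{l'}$ for $-n < k < n$, $l \neq l'$, together with the fact that the residues $j - i$ over the nodes of any partition of size at most $n$ span a range of length at most $n - 1$, to first show that contents arising from different components of a single $\blam$ are distinct. This decomposes $M_\blam$ canonically into pieces $M_\blam^{(l)} = \hat{Q}_l \cdot \{\hat{\xi}^{j-i} : (i,j) \in \lam^{(l)}\}$ (grouped by $\hat{\xi}$-orbit of $\hat{Q}_l$), reducing the claim to the classical single-partition fact that $\{\hat{\xi}^{j-i} : (i,j) \in \lam\}$ determines $\lam$ in the semisimple regime. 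The delicate subcase is when several $\hat{Q}_l$ lie in the same $\hat{\xi}$-orbit, where one must carefully analyse the generating function $\sum_l \hat{Q}_l \sum_{(i,j) \in \lam^{(l)}} \hat{\xi}^{j-i}$ and exploit the gap condition from semisimplicity to prevent spurious cancellations across components.
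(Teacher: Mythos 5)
Your proposal is correct and follows essentially the same route as the paper's proof: both reduce the claim to the distinctness of the content multisets $\bigl\{c_1(\tlam),\dots,c_n(\tlam)\bigr\}$ for distinct $\blam\in\P_{r,n}$ (which the paper likewise leaves as ``easy to deduce'' from semisimplicity) and then recover each $F_\blam$ by Lagrange-style interpolation in the elementary symmetric polynomials of $\mL_1,\dots,\mL_n$. One small slip: the passage from distinct multisets $M_\blam$ to distinct tuples $\bigl(e_1(M_\blam),\dots,e_n(M_\blam)\bigr)$ is not ``Newton's identities'' (which relate power sums to elementary symmetrics and degenerate in small positive characteristic) but simply the observation that $e_1(M),\dots,e_n(M)$ are, up to sign, the coefficients of $\prod_{a\in M}(t-a)$ and hence determine $M$.
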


\begin{proof} Note that $\HH_{n,\K}$ is split semisimple. By Lemma \ref{sscont2}, for any $\blam\neq\bmu\in\P_{r,n}$, $$
\cont(\tlam)\not\sim \cont(\tmu).
$$
It follows that there exists an elementary symmetric polynomial $e_{m_{\blam,\bmu}}(X_1,\cdots,X_n)\in\K[X_1,\cdots,X_n]$, where $1\leq m_{\blam,\bmu}\leq n$,  such that $$
e_{m_{\blam,\bmu}}(\cont(\tlam))-e_{m_{\blam,\bmu}}(\cont(\tmu))\in\K^\times.
$$
Now we define a polynomial $g_\blam(X_1,\cdots,X_n)\in\K[X_1,\cdots,X_n]$ as follows: $$
g_\blam(X_1,\cdots,X_n):=\prod_{\substack{\bmu\in\P_{r,n}\\ \bmu\neq\blam}}\frac{e_{m_{\blam,\bmu}}(X_1,\cdots,X_n)-e_{m_{\blam,\bmu}}(\cont(\tmu))}{e_{m_{\blam,\bmu}}(\cont(\tlam))-
e_{m_{\blam,\bmu}}(\cont(\tmu))}.
$$
It is clear that $g_\blam(X_1,\cdots,X_n)$ is a symmetric polynomial in $X_1,\cdots,X_n$. Hence $g_\blam(\mL_1,\cdots,\mL_n)$ is central in $\HH_{n,\K}$. Moreover, by construction and Lemma \ref{fsemi}, $g_\blam(\mL_1,\cdots,\mL_n)$ acts as the identity on the simple module $S_\K^\blam$, and acts as zero on the simple module $S_\K^\bmu$ whenever $\bmu\neq\blam$. Hence we can deduce that $g_\blam(\mL_1,\cdots,\mL_n)=\mathcal{F}_\blam$. Since $\{\mathcal{F}_\blam|\blam\in\P_{r,n}\}$ is a $\K$-basis of the center $Z(\HH_{n,\K})$, we complete the proof of the lemma.
\end{proof}

\bigskip
\section{Minimal length elements in each conjugacy class of $W_n$}

The purpose of this section is to generalize a fundamental result of Geck and Pfeiffer on the minimal length elements in each conjugacy class of finite Weyl groups to the complex reflection group $W_n$ case. The generalization is quite subtle and nontrivial, mainly due to the fact that when $W_n$ is not a Weyl group, it does not have a good length function which behaves well with respect to its action on a suitable generalized root system.

There are actually two versions of length functions for $W_n$: the first one is the naive length function $\ell(-)$ for $W_n$ defined by the length of reduced expression in terms of its defining generators; the second one is the length function defined by the action of $W_n$ on the generalized root system \cite[\S3]{BM}. When $W_n$ is a Weyl group, these two length functions coincide. Bremke and Malle \cite{BM} studied in details the second length function, while we shall use the first naive length function for $W_n$ throughout this paper. Recall that for $w\in W_n$, a word $x_1\cdots x_k$ on $S=\{t,s_1,\cdots,s_{n-1}\}$ is an expression of $w$ if $x_i\in S, \forall\,1\leq i\leq k$, and $w=x_1\cdots x_k$. If $x_1\cdots x_k$ is an expression of $w$ with $k$ minimal, then we call it a reduced expression of $w$ and write $\ell(w):=k$. Note that if $r\in\{1,2\}$, the Matsumoto theory for Weyl groups ensures that the product $T_{x_1}\cdots T_{x_k}$ depends only on $w$ but not on the choice of the reduced expression $x_1\cdots x_k$ of $w$, and thus one can define $T_w:=T_{x_1}\cdots T_{x_k}$ without causing any ambiguity; while if $r>2$, Matsumoto theory is not applicable anymore and thus the product $T_{x_1}\cdots T_{x_k}$ usually does depend on the choice of the reduced expression $x_1\cdots x_k$ instead of only on $w$.

\subsection{Normal forms and Double coset decomposition}

Recall the presentation for the complex reflection group $W_n$ given in Definition \ref{crg}, where the last four relations are usually called braid relations. By definition, we have $(s_1ts_1)t=t(s_1ts_1)$. It follows that for any $a,b\in\N$, \begin{equation}\label{wbrela}
s_1t^as_1t^b=(s_1ts_1)^at^b=t^b(s_1ts_1)^a=t^bs_1t^as_1 .
\end{equation}

\begin{dfn} For each $0\leq k\leq n-1,\,a\in \N, l\in\Z_{\geq 1}$, we define
$$t_{k,a}:=\begin{cases}s_ks_{k-1}\cdots s_1 t^a, &\text{if $a\neq 0$;}\\
1, &\text{if $a=0$,}
\end{cases}
$$ and $$s'_{k,l}:=s_ks_{k-1}\cdots s_1 t^ls_1\cdots s_{k-1}s_k.$$
By convention, $t_{0,a}$ is understood as $t^a$, $s'_{0,l}$ is understood as $t^l$.
\end{dfn}

\begin{dfn} For any two expressions $x_{i_1}\cdots x_{i_k}$ and $x_{j_1}\cdots x_{j_l}$ of $w\in W_n$, where $x_{i_a},\, x_{j_b}\in S, \forall\,a,b$, we say they are weakly braid-equivalent if one can use braid relations together with the relation (\ref{wbrela}) to transform one to another.
\end{dfn}

Since braid relations and the relation (\ref{wbrela}) keep the length invariant, it is clear that if two expressions are weakly braid-equivalent, then one of them is reduced if and only if the other one is reduced.

\begin{lem}[\text{\cite[Lemma 1.5]{BM}}]\label{BM2}
Any reduced expression of $w\in W_n$ is uniquely weakly braid-equivalent to a word of the form \begin{equation}\label{canonical}
t_{0,a_0}\cdots t_{n-1,a_{n-1}}v, \,\,\,\text {where} \,\,0\leq a_i\leq r-1,\,\, v\in \Sym_n\,\, \text{reduced}.
\end{equation} Moreover, the words of the shape \eqref{canonical} are all reduced and form a system of representatives of all elements of $W_n$.
\end{lem}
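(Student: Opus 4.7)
The plan is to break the lemma into three parts: (a) the tuples $(a_0,\ldots,a_{n-1},v)$ parametrize $W_n$ bijectively via $(a_0,\ldots,a_{n-1},v)\mapsto t_{0,a_0}\cdots t_{n-1,a_{n-1}}\,v$; (b) each such canonical word is a reduced expression; and (c) any reduced expression of $w\in W_n$ is weakly braid-equivalent to the canonical word representing $w$. For (a), I would exploit the wreath product decomposition $W_n\cong(\Z/r\Z)^n\rtimes\Sym_n$. Setting $u_1:=t$ and $u_i:=s_{i-1}s_{i-2}\cdots s_1\,t\,s_1\cdots s_{i-1}$ for $2\le i\le n$, the elements $u_1,\ldots,u_n$ generate the base subgroup $(\Z/r\Z)^n$, pairwise commute, and satisfy $s_j u_i = u_i s_j$ whenever $j\notin\{i-1,i\}$. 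Sliding every $u_i^{a_{i-1}}$ leftward past the $s_j$'s with which it commutes, one computes
\[
  t_{0,a_0}\,t_{1,a_1}\cdots t_{n-1,a_{n-1}} = u_1^{a_0} u_2^{a_1}\cdots u_n^{a_{n-1}}\cdot w_0,
\]
where $w_0 := s_1\,(s_2 s_1)\,(s_3 s_2 s_1)\cdots(s_{n-1}s_{n-2}\cdots s_1)$ is the longest element of $\Sym_n$. Since both $(a_0,\ldots,a_{n-1})\mapsto u_1^{a_0}\cdots u_n^{a_{n-1}}$ and $v\mapsto w_0 v$ are bijections, the map in (a) is a bijection from $\{0,\ldots,r-1\}^n\times\Sym_n$ onto $W_n$, yielding simultaneous existence and element-uniqueness of the canonical form.

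For (b), expanding each $t_{k,a_k}$ as $s_k s_{k-1}\cdots s_1 t^{a_k}$ (and omitting it entirely when $a_k=0$) and writing $v$ via a reduced $\Sym_n$-expression, the canonical word has length $\sum_{a_i>0}(i+a_i)+\ell(v)$. I would verify that this equals $\ell(w)$ by induction on $n$. The subgroup $W_{n-1}:=\langle t,s_1,\ldots,s_{n-2}\rangle$ is a copy of $(\Z/r\Z)\wr\Sym_{n-1}$, and since $u_n$ commutes with every element of $\Sym_{n-1}$, the canonical $W_n$-word factors as a canonical $W_{n-1}$-word followed by a representative of the coset $W_{n-1}\cdot u_n^{a_{n-1}}\cdot d$, with $d$ running over the minimal coset representatives of $\Sym_{n-1}\backslash\Sym_n$. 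The inductive hypothesis takes care of the $W_{n-1}$-factor, and a direct length computation for each of the $rn$ coset representatives delivers the desired lower bound on $\ell(w)$.

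Part (c) is where I expect the main difficulty to lie. I would argue by induction on the number $m$ of $t$-letters appearing in a reduced expression $e$ of $w$. If $m=0$ then $w\in\Sym_n$ and Matsumoto's theorem for the Coxeter group $\Sym_n$ provides a braid-equivalence of $e$ with the canonical word (which has $a_i=0$ for all $i$). If $m\ge 1$, the goal is to transport the leftmost $t$-letter of $e$ into the initial $t^{a_0}$ slot of the canonical form: the commutations $s_j t = t s_j$ for $j\ge 2$ allow any $t$ to slide past any $s_j$ with $j\ne 1$, and the relation (\ref{wbrela}) lets a factor $t^b$ leapfrog a complete block of the form $s_1 t^a s_1$. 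The delicate step will be confluence: one must show that whenever a $t$-letter is blocked by a preceding $s_1$, the surrounding $\Sym_n$-letters can always be rearranged by braid relations so as to produce an adjacent $s_1 t^a s_1$-block to which (\ref{wbrela}) applies. Once the leftmost $t^{a_0}$ has been isolated at the left end, one strips it off and applies the inductive hypothesis to the shorter reduced word that remains. Uniqueness of the canonical form from (a) then forces the canonical word reached in this way to be the one predicted, completing the argument.
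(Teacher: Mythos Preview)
The paper does not prove this lemma; it simply cites \cite[Lemma~1.5]{BM}. So there is no ``paper's proof'' to compare against, but your proposal has a concrete error worth flagging.

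Your identity in part~(a),
\[
  t_{0,a_0}\,t_{1,a_1}\cdots t_{n-1,a_{n-1}} \;=\; u_1^{a_0}u_2^{a_1}\cdots u_n^{a_{n-1}}\cdot w_0,
\]
is false whenever some $a_k=0$. By definition $t_{k,0}=1$, so the descending chain $s_k s_{k-1}\cdots s_1$ is \emph{omitted} from the left-hand side for those $k$, whereas your right-hand side always contains the full longest element $w_0=\prod_{k=1}^{n-1}(s_k\cdots s_1)$. For instance, with $n=2$, $a_0>0$, $a_1=0$ one gets $t_{0,a_0}t_{1,0}=t^{a_0}$ on the left but $t^{a_0}s_1$ on the right. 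The corrected identity is
\[
  t_{0,a_0}\cdots t_{n-1,a_{n-1}}
  \;=\; u_1^{a_0}\cdots u_n^{a_{n-1}}\cdot\!\!\prod_{\substack{0\le k\le n-1\\ a_k>0}}\!(s_k\cdots s_1),
\]
where the $\Sym_n$-factor on the right now depends on the support $\{k:a_k>0\}$. The bijectivity claim survives, but you must argue it stratum by stratum (one stratum for each support set), or else simply note that both sides have cardinality $r^n n!$ and prove injectivity directly.

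Your sketch of~(c) also needs rethinking. The induction you propose --- isolate the leftmost $t$-block $t^{a_0}$ at the far left, strip it off, and recurse --- does not match the target shape: in the canonical word $t_{0,a_0}t_{1,a_1}\cdots t_{n-1,a_{n-1}}v$ the $t$-letters are interleaved with $s_j$'s rather than all sitting at the front, and $a_0$ may well be $0$ (e.g.\ $s_1t=t_{1,1}$ is already canonical with $a_0=0$). So ``transport the leftmost $t$ into the $t^{a_0}$ slot'' is not a well-posed step in general. The argument in \cite{BM} proceeds instead by induction on the word length, handling the rightmost letter via the rewriting rules recorded here as Lemma~\ref{BM1}; that is the natural scheme to adopt.
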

We call (\ref{canonical}) the \textbf{BM normal forms} of elements in $W_n$. By convention, a consecutive sequence of the form $s_as_{a+1}\cdots s_k$ or $s_{a}s_{a-1}\cdots s_k$ is understood as identity whenever $k=0$.

\begin{lem}\text{(\cite[(3.14),(3.15)]{BM})}\label{addone} Let $w\in W_n$ and $s\in S=\{t,s_1,\cdots,s_{n-1}\}$. Then $$
\ell(ws)\leq\ell(w)+1,\quad \ell(sw)\leq\ell(w)+1 .
$$
\end{lem}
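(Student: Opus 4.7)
The statement is an elementary consequence of how the naive length function $\ell$ on $W_n$ is defined, namely as the minimal length of an expression in the generating set $S=\{t,s_1,\dots,s_{n-1}\}$. So the plan is essentially a one-step argument: if we take any reduced expression of $w$ and append (or prepend) $s$, we get an expression (no longer necessarily reduced) of $ws$ (or $sw$) whose length is exactly $\ell(w)+1$, and the length of an element is the minimum over all expressions.

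More explicitly, I would proceed as follows. Fix any reduced expression $x_1\cdots x_k$ of $w$, where $x_i\in S$ and $k=\ell(w)$. Then the word $x_1\cdots x_k\, s$ is an expression (in the sense of Definition in the excerpt, since $s\in S$) of the element $ws$, and it has length $k+1$. By the definition of $\ell(ws)$ as the minimum length over all expressions of $ws$, we obtain
\[
\ell(ws)\leq k+1=\ell(w)+1.
\]
The same argument applied to the word $s\,x_1\cdots x_k$ yields $\ell(sw)\leq \ell(w)+1$.

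I expect no real obstacle here; the only subtle point worth highlighting is that, unlike in the Weyl group case, one cannot in general sharpen this to $\ell(ws)\in\{\ell(w)\pm 1\}$, because $W_n$ (for $r\geq 3$) does not satisfy the Exchange or Deletion Conditions with respect to the naive length function (as the introduction of the paper explicitly emphasizes). Thus the statement as given, which only asserts the inequality $\leq \ell(w)+1$, is the correct weak form that the naive length function does satisfy, and its proof is the trivial appending/prepending argument above.
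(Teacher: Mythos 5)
Your proof is correct, and it is the natural (indeed essentially the only) argument for the naive length function used in the paper: since $\ell(u)$ is by definition the minimum length of a word in $S$ representing $u$, appending (resp.\ prepending) $s\in S$ to a reduced word for $w$ exhibits a word of length $\ell(w)+1$ for $ws$ (resp.\ $sw$), whence the inequality. The paper does not give its own proof of this lemma --- it simply cites Bremke--Malle \cite[(3.14),(3.15)]{BM} --- so there is no in-paper argument to compare against; your one-line reasoning is exactly what is needed for the naive length function. Your closing remark is also apt: for $r>2$ the Exchange/Deletion conditions fail, so the Coxeter-style dichotomy $\ell(ws)\in\{\ell(w)-1,\ell(w)+1\}$ is genuinely unavailable here; for instance $\ell(t^{r-1})=r-1$ while $\ell(t^{r-1}\cdot t)=\ell(1)=0$, so multiplying by $t$ can decrease length by as much as $r-1$ in a single step. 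The only inequality that survives is the one stated, and this is precisely what later arguments (e.g.\ the remark after \eqref{2possibi} that $\ell(sw)<\ell(w)$ or $\ell(ws^{-1})<\ell(w)$ already forces $\ell(sws^{-1})\le\ell(w)$ when $s\in\{s_1,\dots,s_{n-1}\}$) rely on.
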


\begin{prop}\label{canonicalform}
Any reduced expression of $w\in W_n$ is uniquely weakly braid-equivalent to a reduced word of one of the following forms: $$\begin{aligned}\label{double coset}
(1)\,\,&t_{0,a_0}\cdots t_{n-2,a_{n-2}}\sigma s_{n-1}\cdots s_k,\,\,0\leq k\leq n-1,\\
(2)\,\,&t_{0,a_0}\cdots t_{n-2,a_{n-2}}\sigma s_{n-1}\cdots s_1 t^l s_1\cdots s_k,\,\,0\leq k\leq n-2,\,1\leq l\leq r-1,\\
(3)\,\,&t_{0,a_0}\cdots t_{n-2,a_{n-2}}\sigma s'_{n-1,l},\,\,1\leq l\leq r-1,
\end{aligned}$$
where in each expression, $\sigma\in \Sym_{n-1}$ is a reduced expression. Moreover, these words (1), (2) and (3) form a system of representatives of all elements of $W_n$.
\end{prop}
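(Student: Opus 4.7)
The strategy is to refine the BM normal form of Lemma~\ref{BM2} by further decomposing the $\Sym_n$-factor $v$ via the coset structure of $\Sym_{n-1}\le\Sym_n$, splitting into cases according to whether the exponent $a_{n-1}$ is zero. To this end I introduce
\[
\mathcal{D}:=\bigl\{s_{n-1}\cdots s_k:0\le k\le n-1\bigr\}\cup\bigl\{s_{n-1}\cdots s_1t^ls_1\cdots s_k:1\le l\le r-1,\,0\le k\le n-2\bigr\}\cup\bigl\{s'_{n-1,l}:1\le l\le r-1\bigr\},
\]
and aim to show that $\mathcal{D}$ is a complete set of minimum-length right coset representatives of $W_{n-1}:=\langle t,s_1,\ldots,s_{n-2}\rangle\cong G(r,1,n-1)$ in $W_n$, with each $d\in\mathcal{D}$ having naive length equal to the number of letters in its given expression.

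To establish distinctness of cosets, I use the natural faithful action of $W_n$ on $\Omega:=\{\zeta^ce_j:1\le j\le n,\,0\le c\le r-1\}$ (where $\zeta$ is a primitive $r$-th root of unity, $t\cdot e_1=\zeta e_1$ and $t\cdot e_j=e_j$ for $j\ge 2$, and $s_i$ swaps $e_i$ with $e_{i+1}$), in which $W_{n-1}$ is exactly the stabilizer of $e_n$. Then $W_{n-1}d=W_{n-1}d'$ iff $d^{-1}(e_n)=d'^{-1}(e_n)$, and a direct computation on each of the three types of elements gives $d^{-1}(e_n)=e_k$ for $d=s_{n-1}\cdots s_k$ (with $e_n$ when $k=0$), $d^{-1}(e_n)=\zeta^{-l}e_{k+1}$ for $d=s_{n-1}\cdots s_1t^ls_1\cdots s_k$, and $d^{-1}(e_n)=\zeta^{-l}e_n$ for $d=s'_{n-1,l}$; these exhaust the orbit $\Omega$ of size $rn=[W_n:W_{n-1}]$ without repetition. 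To verify that each given expression of $d\in\mathcal{D}$ is reduced and that length additivity $\ell(ud)=\ell(u)+\ell(d)$ holds for any $u\in W_{n-1}$, I will compute the BM normal form of $ud$ in the semidirect-product model $W_n=(\Z/r\Z)^n\rtimes\Sym_n$: the coordinate vector and permutation part of $ud$ are read off directly, and the BM length of the resulting normal form is shown to match the letter count of the form (1), (2) or (3) expression.

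Given these preparations, existence follows by writing $w=ud$ with $u\in W_{n-1},\,d\in\mathcal{D}$ and applying Lemma~\ref{BM2} to $u$ to obtain its unique BM normal form $u=t_{0,a_0}\cdots t_{n-2,a_{n-2}}\sigma$ with $\sigma\in\Sym_{n-1}$ reduced; concatenating with $d$ gives an expression of the form (1), (2) or (3) according to the piece of $\mathcal{D}$ containing $d$. Uniqueness is immediate from the uniqueness of both the coset decomposition and the BM normal form in $W_{n-1}$. Finally, since each such expression is a reduced expression of $w$, Lemma~\ref{BM2} forces it to be weakly braid-equivalent to the BM normal form of $w$, and hence to every reduced expression of $w$.

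The principal technical obstacle lies in the length-additivity verification, most delicately in case (3): the BM normal form of $t_{0,a_0}\cdots t_{n-2,a_{n-2}}\sigma\cdot s'_{n-1,l}$ works out to $t_{0,a_0}\cdots t_{n-1,l}\cdot(s_1s_2\cdots s_{n-1}\sigma)$, and one must invoke the fact that $s_1s_2\cdots s_{n-1}$ is the distinguished minimum-length left coset representative of $\Sym_{n-1}$ in $\Sym_n$ sending $n\mapsto 1$ to conclude $\ell(s_1\cdots s_{n-1}\sigma)=(n-1)+\ell(\sigma)$, which together with $\binom{n}{2}=\binom{n-1}{2}+(n-1)$ produces the required total length $\binom{n-1}{2}+\sum_i a_i+\ell(\sigma)+2(n-1)+l$ matching the letter count of the form (3) expression.
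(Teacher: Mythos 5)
Your proposal is correct and takes a genuinely different route from the paper's, though both rest on the same foundation (Lemma~\ref{BM2}). The paper starts from the BM normal form $t_{0,a_0}\cdots t_{n-1,a_{n-1}}v$ of $w$, splits into two cases according to whether $a_{n-1}=0$, and in each case rewrites $v\in\Sym_n$ via the appropriate right coset decomposition of $\Sym_n$ (over $\Sym_{n-1}$ when $a_{n-1}=0$, over $\Sym_{\{2,\ldots,n\}}$ when $a_{n-1}\neq 0$) and the braid/shift relations of Lemma~\ref{BM1}; because weak braid-equivalence preserves letter count, the output is automatically reduced and no separate length verification is needed. Uniqueness is handled by counting the number of expressions of the three forms. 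You instead establish first that $\mathcal{D}$ is a complete set of distinguished right coset representatives of $W_{n-1}$ in $W_n$ (with length additivity $\ell(ud)=\ell(u)+\ell(d)$), using the permutation action on $\Omega$ to separate the cosets and the semidirect-product model to compute BM normal forms of the products $ud$; you then apply the BM normal form inside $W_{n-1}$. Your route makes the coset structure — which the paper only formalizes afterward in Corollary~\ref{cancel1} — visible from the outset, at the cost of an explicit length-additivity verification that the paper sidesteps. Two small points worth tightening: (i) your length formula $\binom{n-1}{2}+\sum_i a_i+\ell(\sigma)+2(n-1)+l$ implicitly assumes all $a_i\neq 0$; the correct contribution of the $t$-part is $\sum_{i:\,a_i\neq 0}(i+a_i)$, though this cancels on both sides of the additivity identity and so does not affect the argument; (ii) you only carry out the BM-normal-form calculation for form~(3), so forms~(1) and~(2) still need the analogous (easier) checks to complete the proof.
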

Later in Corollary \ref{cancel1} we shall see that (1), (2), (3) give rise to a nice $(W_{n-1},W_{n-1})$-double coset decomposition of all elements in $W_n$. Therefore, we shall refer the above three kinds of words (1), (2), (3) as \textbf{double coset normal forms} (or \textbf{DC normal forms} for short)  of elements in $W_n$.

\begin{proof}
By Lemma \ref{BM2}, each reduced expression of $x\in W_n$ is uniquely weakly braid-equivalent to a word of the form \eqref{canonical}.

\medskip
{\it Case 1.} $a_{n-1}=0$. Then the expression (\ref{canonical}) is of the form $$t_{0,a_0}\cdots t_{n-2,a_{n-2}}v,
$$ where  $v\in \Sym_{n}$ is a reduced expression. But we have the canonical right coset decomposition $$v=\sigma s_{n-1}\cdots s_k,
$$ where $\sigma\in\Sym_{n-1}$ and $0\leq k \leq n-1$. Hence, it is weakly braid-equivalent to one of the elements in (1).

\medskip
{\it Case 2.} $a_{n-1}\neq 0$. We also have the canonical right coset decomposition of $v$:$$ v=\sigma's_1\cdots s_k,
$$
where $\sigma'\in \Sym_{\{2,3,\cdots n\}}$ and $0\leq k \leq n-1$. Using the braid relations for $W_n$ we see that (\ref{canonical}) is weakly braid-equivalent to the form of $$t_{0,a_0}\cdots t_{n-2,a_{n-2}}\sigma t_{n-1, a_{n-1}}s_1\cdots s_k,
$$ where $\sigma\in \Sym_{n-1}$ and $0\leq k \leq n-1$. This is exactly an element of either the form (2) or the form (3) in this proposition.

Finally, one can check that the numbers of the expressions (1), (2), (3) above is exactly $|W_n|$. It follows that these elements are distinct and hence the last statement of the proposition holds.
\end{proof}

\begin{dfn} For each $n\in\Z_{\geq 1}$, we define $$\D_n=\{1,\,s_{n-1},\,s'_{n-1,1},\cdots, s'_{n-1,r-1}\}.
$$
By convention, $\D_1:=\{1,t,t^2,\cdots,t^{r-1}\}$.
\end{dfn}

\begin{cor}\label{cancel1}
For any $w\in W_n$, there is a unique element $d_n\in\D_n$, such that Proposition \ref{double coset} gives the following decomposition:
\begin{equation}\label{xadnb} w=ad_nb,
\end{equation} with the property that $\ell(w)=\ell(a)+\ell(d_n)+\ell(b)$ and $a,b\in W_{n-1}$. Moreover, if $b$ ends with $s\in S\setminus\{s_{n-1}\}$, then $$ sws^{-1}=(sa)d_n(bs^{-1})
$$ can become a DC normal form (\ref{canonicalform}) if we rewrite $sa$ to be the form of \eqref{canonical}. Moreover, $\ell(sws^{-1})\leq \ell(w)$.
\end{cor}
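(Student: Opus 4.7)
The plan is to read off the decomposition $w = a d_n b$ directly from the DC normal form of $w$ provided by Proposition~\ref{canonicalform}, and then perform a case analysis to handle the conjugation by $s$ on both sides.

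First, observe that in each of the three types (1), (2), (3) of Proposition~\ref{canonicalform}, the leading block $t_{0,a_0}\cdots t_{n-2,a_{n-2}}\sigma$ already lies in $W_{n-1}$, and the remaining factor can be written as $d_n \cdot b$ for a unique $d_n \in \D_n$. Namely, $d_n = 1$ in type (1) when the tail $s_{n-1}\cdots s_k$ is empty; $d_n = s_{n-1}$ in type (1) with nonempty tail and in type (2), with the remainder $b$ consisting of the letters to the right of the leftmost $s_{n-1}$; and $d_n = s'_{n-1,l}$ in type (3), with $b = 1$. Setting $a = t_{0,a_0}\cdots t_{n-2,a_{n-2}}\sigma \in W_{n-1}$ in every case, the additivity $\ell(w) = \ell(a) + \ell(d_n) + \ell(b)$ is immediate because the DC normal form itself is reduced by Proposition~\ref{canonicalform}. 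Uniqueness of $d_n$ follows because different types of DC normal form represent distinct elements of $W_n$, so the type of $w$, and hence $d_n$, is determined by $w$ alone.

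For the second claim, the identity $sws^{-1} = (sa)d_n(bs^{-1})$ is purely algebraic. Since $s \in S\setminus\{s_{n-1}\}$, both $sa$ and $bs^{-1}$ lie in $W_{n-1}$. A case analysis shows that $bs^{-1}$ is again a legitimate tail of a DC normal form: in type (1) with $b = s_{n-2}\cdots s_k$ and $s = s_k$, we obtain $bs^{-1} = s_{n-2}\cdots s_{k+1}$; in type (2) with $b = s_{n-2}\cdots s_1 t^l s_1\cdots s_k$, erasing the last letter produces either a shorter type (2) tail (when $s = s_k$ with $k \geq 2$), a type (2) tail with $k=0$ (when $s = s_1$), a type (2) tail with $l$ reduced by one (when $s = t$ with $l \geq 2$), or the type (1) tail $s_{n-2}\cdots s_1$ (when $s = t$ with $l = 1$). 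Once we rewrite $sa \in W_{n-1}$ in the BM normal form \eqref{canonical}, the concatenation $(sa)d_n(bs^{-1})$ matches exactly one of the shapes (1), (2), (3) of Proposition~\ref{canonicalform}, which identifies it as the DC normal form of $sws^{-1}$.

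For the length bound, subadditivity of $\ell$ together with Lemma~\ref{addone} yields $\ell(sa) \leq \ell(a) + 1$, while the fact that $s$ is the last letter of the reduced expression for $b$ read off from the DC normal form gives $\ell(bs^{-1}) \leq \ell(b) - 1$. Combining these with $\ell(d_n)$ produces
\[
\ell(sws^{-1}) \leq \ell(sa) + \ell(d_n) + \ell(bs^{-1}) \leq (\ell(a)+1) + \ell(d_n) + (\ell(b)-1) = \ell(w),
\]
as required. The only subtle point is the case-by-case verification in the third paragraph that every admissible combination of (type, $k$, $l$, $s$) produces a tail fitting one of the three DC normal form patterns; no individual case is difficult, but an exhaustive enumeration is needed to see that no pathological cancellation can occur between the three blocks.
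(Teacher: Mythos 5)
Your proof is correct and follows essentially the same approach as the paper's: both read the decomposition $w=ad_nb$ directly off the DC normal form of Proposition~\ref{canonicalform}, verify by case analysis that conjugation by $s$ again produces a DC normal form once $sa$ is rewritten in BM form, and combine $\ell(sa)\leq\ell(a)+1$ (Lemma~\ref{addone}) with $\ell(bs^{-1})=\ell(b)-1$ to obtain the length bound. The paper's treatment of the case $s=t$ is a bit more explicit (it identifies the DC normal form of $twt^{-1}$ concretely and computes $\ell(ta)$ separately for $a_0<r-1$ and $a_0=r-1$), whereas you apply Lemma~\ref{addone} uniformly, but this is a cosmetic difference and the underlying argument is identical.
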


\begin{proof} The first statement is clear. Let's consider the second statement. Suppose $b$ ends with $s\in S\setminus\{s_{n-1}\}$, we can rewrite $sa$ to be the form of \eqref{canonical}.

\medskip
{\it Case 1.} $s=t$. Then the double coset decomposition (\ref{xadnb}) must be a DC normal form (2) in Proposition  \ref{canonicalform} (with $k=0$, $d_n=s_{n-1}$ and $a=t_{0,a_0}\cdots t_{n-2,a_{n-2}}\sigma$). That is,
$$t_{0,a_0}\cdots t_{n-2,a_{n-2}}\sigma s_{n-1}\cdots s_1 t^l,
$$where $1\leq l\leq r-1$ and $\sigma\in \Sym_{n-1}$ is a reduced expression. Then $$
twt^{-1}=tas_{n-1}s_{n-2}\cdots s_1 t^{l-1}=t_{0,a_0+1}\cdots t_{n-2,a_{n-2}}\sigma s_{n-1}\cdots s_1 t^{l-1}$$ and $\ell(ta)=\ell(a)+1$ if $a_0<r-1$; while $\ell(ta)=\ell(a)-(r-1)$ when $a_0=r-1$. This proves $\ell(sws^{-1})\leq \ell(w)$ in this case.

\medskip
{\it Case 2.} $s=s_i$, where $1\leq i<n-1$. Then by Lemma \ref{addone}, $\ell(sa)\leq \ell(a)+1$.

Hence in both two cases, we have$$\begin{aligned}
\ell(sws^{-1})=\ell(sad_nbs^{-1})&=\ell(sa)+\ell(d_n)+\ell(bs^{-1})\leq \ell(a)+1+\ell(d_n)+\ell(bs^{-1})\\
&=\ell(a)+\ell(d_n)+\ell(b)=\ell(ad_nb).\end{aligned}
$$
\end{proof}

\begin{cor}\label{cancel12}
For any $d_n\in\D_n$ and $w\in W_{n-1}$, we have $\ell(wd_n)=\ell(w)+\ell(d_n)$.
\end{cor}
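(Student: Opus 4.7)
The plan is to read $wd_n$ directly off as a DC normal form from Proposition \ref{canonicalform} and then invoke the reducedness statement there. Since $w\in W_{n-1}$, applying Lemma \ref{BM2} inside $W_{n-1}$ furnishes a reduced BM normal form
$$ w = t_{0,a_0}\cdots t_{n-2,a_{n-2}}\sigma,\qquad 0\le a_i\le r-1,\ \sigma\in\Sym_{n-1}\text{ reduced}, $$
in which no factor $t_{n-1,\,\cdot}$ and no letter $s_{n-1}$ appears. This observation is the only input needed beyond Proposition \ref{canonicalform}.

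I would then split on the possibilities for $d_n\in\D_n=\{1,s_{n-1},s'_{n-1,1},\dots,s'_{n-1,r-1}\}$. The case $d_n=1$ is trivial (using the convention that the trailing string $s_{n-1}\cdots s_k$ collapses to the identity when $k=0$, so $w$ is itself template (1) with $k=0$). If $d_n=s_{n-1}$, then
$$ wd_n = t_{0,a_0}\cdots t_{n-2,a_{n-2}}\sigma\, s_{n-1} $$
is precisely template (1) of Proposition \ref{canonicalform} with $k=n-1$. If $d_n=s'_{n-1,l}$ with $1\le l\le r-1$, then
$$ wd_n = t_{0,a_0}\cdots t_{n-2,a_{n-2}}\sigma\, s_{n-1}s_{n-2}\cdots s_1 t^l s_1\cdots s_{n-1} $$
matches template (3) verbatim. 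In each case, Proposition \ref{canonicalform} guarantees that the displayed word is a reduced expression for $wd_n$, so $\ell(wd_n)$ equals its total letter count, which is visibly $\ell(w)+\ell(d_n)$ once one knows $\ell(s_{n-1})=1$ and $\ell(s'_{n-1,l})=2(n-1)+l$.

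These two length identities themselves come from the same Proposition \ref{canonicalform} applied to $d_n$ alone (take $w=1$), so no independent argument is required. I do not anticipate any genuine obstacle: the corollary is essentially a book-keeping consequence of the classification of reduced DC normal forms already in Proposition \ref{canonicalform}. The only point that deserves explicit verification in the write-up is that $\sigma\in\Sym_{n-1}$ really may be used as the middle factor of templates (1) and (3) in Proposition \ref{canonicalform}; this is automatic because $\Sym_{n-1}\subseteq\Sym_{n-1}$ and the required reducedness of $\sigma$ is inherited from its chosen reduced expression in $w$.
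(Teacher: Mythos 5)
Your proposal is correct and essentially matches the paper's proof: the paper also writes $w$ in BM normal form and observes that $wd_n$ is then already one of the reduced DC normal forms guaranteed by Proposition \ref{canonicalform}, only it phrases the final length count via Corollary \ref{cancel1} (with $a=w$, $b=1$) instead of reading the letter count directly off templates (1) and (3) as you do. The two routes are the same argument in very slightly different clothing.
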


\begin{proof} We express $w$ in the form (\ref{canonicalform}). Then the corollary follows from Corollary \ref{cancel1}.
\end{proof}

\subsection{Some minimal length elements in conjugacy class}

Let $\lambda=(\lambda_1,\cdots,\lambda_k)$ be a composition of $n$. We set $r_1:=0$, $r_{k+1}:=n$, and $$
r_i:=\lam_1+\lam_2+\cdots+\lam_{i-1},\quad\forall\,2\leq i\leq k .
$$
Let $J:=\{0,1,\cdots,r-1\}$ and $\epsilon=(\epsilon_1,\cdots,\epsilon_k)\in J^k$. For each $1\leq i\leq k$, we define \begin{equation}\label{wlam}
w_{\lambda,\epsilon,i}:=\begin{cases}s'_{r_i,\epsilon_i}s_{r_i+1}s_{r_i+2}\cdots s_{r_{i+1}-1}, &\text{if $\epsilon_i\neq 0$;}\\
s_{r_i+1}s_{r_i+2}\cdots s_{r_{i+1}-1}, &\text{if $\epsilon_i=0$,}
\end{cases},\quad w_{\lambda,\epsilon}=\prod_{i=1}^k w_{\lambda,\epsilon,i}.
\end{equation}

Recall that for each $m\in\N$, $\mathcal{P}_m$ denotes the set of partitions of $m$.

\begin{dfn} A composition $\lam=(\lam_1,\cdots,\lam_k)$ of $n$ is called an opposite partition if $\lam_1\leq\lam_2\leq\cdots\leq\lam_k$. We use $\OP_{m,-}$ to denote the set of opposite partitions of $m$. Given $\lam=(\lam_1,\cdots,\lam_k)\in\OP_{m,-}$, we color each row $i$ of $\lam$ with an integer $c(i)\in\{1,\cdots,r-1\}$ such that $c(i)\geq c(i+1)$ whenever $\lam_i=\lam_{i+1}$.
\end{dfn}

\begin{dfn}\label{colorsemi} If $\lam$ is an opposite partition of $m$ with a color data $\{c(i)|1\leq i\leq\ell(\lam)\}$, $\mu$ is a composition of $n-m$, then we call the bicomposition $(\lam,\mu)$ a colored semi-bicomposition of $n$. We use $\CC_{n}$ to denote the set of colored semi-bicompositions of $n$. If $(\lam,\mu)\in\CC_n$ and $\mu$ is a partition, then we say $(\lam,\mu)$ is a colored semi-bipartition. We use $\CP_n$ to denote the set of colored semi-bipartitions of $n$.
\end{dfn}

For each colored semi-bicomposition ${\alpha}=(\lam,\mu)\in\CC_n$, where $\lam=(\lam_1,\cdots,\lam_k)$ and  $\mu=(\mu_1,\cdots,\mu_l)$,  we associate it with a composition $\overline{{\alpha}}:=(\lam_1,\cdots,\lam_k,\mu_1,\cdots,\mu_l)$ of $n$ and a sequence $\epsilon=(c(1),\cdots,c(k),\underbrace{0,\cdots,0}_{\text{$l$ copies}})\in J^{k+l}$. We define \begin{equation}\label{walpha}
w_{\alpha}:=w_{\overline{\alpha},\epsilon}.
\end{equation}

The following combinatorial result follows directly from the definition of colored semi-bipartitions.

\begin{lem}\label{bije12} There is bijection $\theta$ from the set $\CP_n$ onto the set $\mathscr{P}_{r,n}$ of $r$-partitions of $n$ such that \begin{enumerate}
\item the $1$-st component of $\theta(\lam,\mu)$ is $\mu$; and
\item for each $2\leq i\leq r$, the $i$-th component of $\theta(\lam,\mu)$ is the unique partition obtained by reordering the order of all the rows of $\lam$ colored by $i-1$.
\end{enumerate}
\end{lem}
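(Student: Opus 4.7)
The plan is to define the map $\phi_n$ and its inverse explicitly, then verify that they are mutually inverse by tracking how the coloring data on $\lam$ corresponds to the distribution of parts among the components $\mu^{(2)}, \dots, \mu^{(r)}$ of an $r$-partition.

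First I would define $\phi_n$ as the map sending a colored semi-bipartition $(\lam, \mu)$ (with coloring $\{c(i)\}_{1\leq i\leq \ell(\lam)}$) to the $r$-tuple $(\mu^{(1)}, \dots, \mu^{(r)})$, where $\mu^{(1)} := \mu$ and, for $2 \leq i \leq r$, $\mu^{(i)}$ is the partition whose parts form the multiset $\{\lam_j \mid c(j) = i-1\}$ arranged in weakly decreasing order. Since $\sum_i |\mu^{(i)}| = |\mu| + \sum_j \lam_j = (n-m) + m = n$, the image lies in $\mathscr{P}_{r,n}$, and conditions (1), (2) of the lemma hold by construction.

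Next I would construct a candidate inverse $\psi_n$. Given $\blam = (\mu^{(1)}, \dots, \mu^{(r)}) \in \mathscr{P}_{r,n}$, set $\mu := \mu^{(1)}$ and consider the multiset of colored parts
\[
M := \bigcup_{i=2}^{r} \bigl\{(\mu^{(i)}_j, \, i-1) \bigm| 1 \leq j \leq \ell(\mu^{(i)}) \bigr\}.
\]
Order the elements of $M$ as $(\lam_1, c(1)), \dots, (\lam_k, c(k))$ such that $\lam_1 \leq \lam_2 \leq \cdots \leq \lam_k$, breaking ties by requiring $c(i) \geq c(i+1)$ whenever $\lam_i = \lam_{i+1}$. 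This ordering is uniquely determined, producing a well-defined opposite partition $\lam$ with a valid coloring in the sense of Definition \ref{colorsemi}. Define $\psi_n(\blam) := (\lam, \mu)$.

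Finally I would check $\psi_n \circ \phi_n = \mathrm{id}$ and $\phi_n \circ \psi_n = \mathrm{id}$. For the first direction, starting from $(\lam, \mu) \in \CP_n$, the multiset $M$ produced from $\phi_n(\lam, \mu)$ is exactly $\{(\lam_j, c(j))\}_{j=1}^{\ell(\lam)}$, and the ordering rules in the definition of $\psi_n$ coincide with the defining constraints of a colored semi-bipartition, so we recover $(\lam, \mu)$. For the reverse direction, the parts of $\psi_n(\blam)$ colored by $i-1$ are precisely the parts of $\mu^{(i)}$, so reordering them weakly decreasingly yields $\mu^{(i)}$, giving back $\blam$.

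The only subtle point --- and the only place any argument is needed --- is the tie-breaking rule: because two distinct $r$-partitions may contribute equal part sizes to different components, the condition ``$c(i) \geq c(i+1)$ whenever $\lam_i = \lam_{i+1}$'' is exactly what is required to make the ordering on $M$ unique and to match bijectively the freedom of distributing equal-size parts across components $\mu^{(2)}, \dots, \mu^{(r)}$. Once this is observed, the verification is routine.
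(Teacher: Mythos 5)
Your proof is correct and is precisely the explicit verification the paper leaves implicit (the paper only remarks that the lemma ``follows directly from the definition'' and gives no argument). Your construction of $\phi_n$ and its inverse $\psi_n$, together with the observation that the tie-breaking condition $c(i)\geq c(i+1)$ when $\lam_i=\lam_{i+1}$ in Definition~\ref{colorsemi} is exactly what makes the reordering of the multiset $M$ unique, supplies the routine details correctly.
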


We set $$\begin{aligned}
\Sigma_n:&=\bigl\{(d_1,\cdots,d_n)\bigm|d_i\in\mathcal{D}_i, \forall\,1\leq i\leq n\bigr\},\\
\mathcal{C}_n:&=\Bigl\{(\lam,\epsilon)\Bigm|\begin{matrix}\text{$\lam=(\lam_1,\cdots,\lam_k)$ is a composition of $n$,}\\
\text{$\epsilon=(\epsilon_1,\cdots,\epsilon_k)\in J^k$.}
\end{matrix}\Bigr\}.
\end{aligned}
$$

\begin{lem}\label{bije1} With the notations as above,  there is a natural bijection $\theta_n$ from the set $\Sigma_n$ onto the set $\mathcal{C}_n$.
\end{lem}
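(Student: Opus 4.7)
The plan is to write down the bijection $\theta_n$ explicitly by reading each sequence $(d_1,\dots,d_n)\in\Sigma_n$ as the concatenation of ``blocks'' of a composition of $n$, with each block labelled by an element of $J=\{0,1,\dots,r-1\}$. First I will observe that $|\mathcal{D}_1|=r$ while $|\mathcal{D}_i|=r+1$ for $i\geq 2$, so $|\Sigma_n|=r(r+1)^{n-1}$, and on the other hand, grouping compositions of $n$ by their number of parts, $|\mathcal{C}_n|=\sum_{k=1}^{n}\binom{n-1}{k-1}r^k=r(r+1)^{n-1}$. Hence the two sets have the same cardinality, which will help serve as a sanity check.

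Next I will define the map $\theta_n\colon\Sigma_n\to\mathcal{C}_n$. Given $(d_1,\dots,d_n)\in\Sigma_n$, I declare an index $i\in\{1,\dots,n\}$ to be a \emph{break point} if $d_i\neq s_{i-1}$ (with the convention that $s_0$ is not among the elements of $\mathcal{D}_1$, so that $i=1$ is automatically a break point). Let $1=i_1<i_2<\cdots<i_k\le n$ be all the break points, set $i_{k+1}:=n+1$, and define
\begin{equation*}
\lambda:=(i_2-i_1,\,i_3-i_2,\,\dots,\,i_{k+1}-i_k),\qquad r_j:=i_j-1.
\end{equation*}
For each $1\le j\le k$, the element $d_{i_j}$ lies in $\mathcal{D}_{r_j+1}=\{1,s_{r_j},s'_{r_j,1},\dots,s'_{r_j,r-1}\}$ (using the convention $s'_{0,l}=t^l$ to handle $j=1$); since $d_{i_j}\neq s_{r_j}$ by choice of break point, I set
\begin{equation*}
\epsilon_j:=\begin{cases}0,&\text{if }d_{i_j}=1,\\ l,&\text{if }d_{i_j}=s'_{r_j,l}\text{ with }1\leq l\leq r-1.\end{cases}
\end{equation*}
Then I will put $\theta_n(d_1,\dots,d_n):=(\lambda,\epsilon)\in\mathcal{C}_n$.

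Finally I will construct an inverse map $\theta_n^{-1}$ directly: given $(\lambda,\epsilon)\in\mathcal{C}_n$ with $\lambda=(\lambda_1,\dots,\lambda_k)$ and $\epsilon=(\epsilon_1,\dots,\epsilon_k)$, put $r_j:=\lambda_1+\cdots+\lambda_{j-1}$ and define a sequence $(d_1,\dots,d_n)$ by
\begin{equation*}
d_i:=\begin{cases}s'_{r_j,\epsilon_j}, & \text{if }i=r_j+1\text{ and }\epsilon_j\neq 0\text{ (reading }s'_{0,l}=t^l\text{ when }j=1),\\ 1, & \text{if }i=r_j+1\text{ and }\epsilon_j=0,\\ s_{i-1}, & \text{otherwise.}\end{cases}
\end{equation*}
A routine verification will show that $d_i\in\mathcal{D}_i$ in each case and that $\theta_n$ and this map are mutually inverse, because the break points of the reconstructed sequence coincide with $r_1+1,\dots,r_k+1$ and the values of $d_{r_j+1}$ recover the chosen $\epsilon_j$.

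There is no real obstacle in this lemma; the content is purely combinatorial bookkeeping. The only point requiring care is the boundary case $i=1$, where $\mathcal{D}_1=\{1,t,\dots,t^{r-1}\}$ has a different shape from $\mathcal{D}_i$ for $i\ge 2$, and is handled uniformly via the convention $s'_{0,l}=t^l$. Once the two maps are written down, checking that they are mutually inverse is immediate from the definitions, and the cardinality count above provides an independent confirmation.
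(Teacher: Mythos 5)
Your proposal is correct and constructs exactly the same bijection as the paper, just packaged more cleanly: the paper builds $\theta_n$ inductively by scanning for the maximal run $d_m=s_{m-1}?$ of ``consecutive'' generators ending at $d_n$ and attaching a new row of that length to $\lambda(m-1)$, whereas you identify all the break points at once and read off the composition and the coloring in a single pass, then exhibit the inverse explicitly and add a cardinality sanity check. The underlying combinatorial idea — parts of $\lambda$ correspond to maximal runs of consecutive $s_i$'s, and the color of a part is read from the first element $d_{i_j}\in\{1, s'_{r_j,1},\dots,s'_{r_j,r-1}\}$ of the run — is identical; the convention $s'_{0,l}=t^l$ handles the irregular set $\mathcal{D}_1$ in the same spirit as the paper's separate base case.
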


\begin{proof} We construct inductively a bijection $\theta_n$ from the set $\Sigma_n$ onto the set $\mathcal{C}_n$ as follows.  For any $1<m<n$, if $$
d_{m+1}=s_m, $$
then we say that $\{d_m,d_{m+1}\}$ are consecutive, otherwise we say $\{d_m,d_{m+1}\}$ are not consecutive. For example,  $\{d_1,d_{2}\}$ are consecutive if and only if $(d_1,d_{2})\in\{(t^a,s_{1})|0\leq a\leq r-1\}.$

If $n=1$, then we define $\theta_1(d_1)=((1),a)$, where $0\leq a\leq r-1$ satisfying $d_1=t^a$, $(1)$ denotes the one box composition of $1$. In general, assume that for each $1\leq m\leq n-1$, the bijection map $\theta_m$ is already constructed. Suppose that $d_{n-1}, d_n$ are not consecutive. If $d_n=1$ (resp., $d_n=s'_{n-1,a}$ for some $1\leq a\leq r-1$), then we define $\lam(n)$ to be the composition of $n$ which is obtained by adding a one box row to the bottom of $\lam(n-1)$ and define $\epsilon(n)$ to be tuple obtained by adding one more component with entry $0$ (resp., $a$) to the right end of $\epsilon(n-1)$;

Suppose that $d_{n-1}, d_n$ are consecutive. Let $m$ be the minimal integer such that for any $0\leq i\leq n-m-1$, $d_{m+i}, d_{m+i+1}$ are consecutive. In particular, $d_{m-1},d_m$ are not consecutive. If $d_m=1$, then we define $\lam(n)$ to be the composition of $n$ which is obtained by adding an $n-m+1$ boxes row to the bottom of $\lam(m-1)$ and define $\epsilon(n)$ to be tuple obtained by adding one more component with entry $0$ to the right end of $\epsilon(m-1)$; If $d_m=s'_{m-1,a}$ for some $1\leq a\leq r-1$, then we define $\lam(n)$ to be the composition of $n$ which is obtained by adding an $n-m+1$ boxes row to the bottom of $\lam(m-1)$ and define $\epsilon(n)$ to be tuple obtained by adding one more component with entry $a$ to the right end of $\epsilon(m-1)$. As a result, we get a composition $\lam=(\lam_1,\cdots,\lam_k)$ of $n$ and a sequence $\epsilon=(\epsilon_1,\cdots,\epsilon_k)\in J^k$ which satisfies $d_1\cdots d_n= w_{\lambda,\epsilon}$. In other words, we have defined the map $\theta_n$. Conversely, as any element $w_{\lam,\epsilon}$ can be uniquely decomposed as $d_1\cdots d_n$ with $d_i\in\D_i$ for each $i$, we see there is a natural map $\theta'_n$ from the set $\mathcal{C}_n$ to the set $\Sigma_n$. It is easy to check that $\theta'_n\circ\theta_n=\id$ and $\theta_n\circ\theta'_n=\id$. Hence $\theta_n$ is a bijection.
\end{proof}

\begin{dfn}
Given $w,\,w'\in W_n$ and $s\in S$, we write $w \overset{s}{\rightarrow} w'$ if $w'=sws^{-1}$, $\ell(w')\leq \ell(w)$ and \begin{equation}\label{2possibi}
\text{either $\ell(sw)<\ell(w)$ or $\ell(ws^{-1})<\ell(w)$.}
\end{equation}
If $w=w_1,\,w_2,\cdots,\,w_m=w'$ is a sequence of elements such that for each $1\leq i<m$, $w_{i} \overset{x_i}{\rightarrow} w_{i+1}$ for some $x_i\in S$, we write $w \overset{(x_1,\cdots,x_{m-1})}{\longrightarrow} w'$ or $w\rightarrow w'$.
\end{dfn}

Note that if $s\in\{s_1,\cdots,s_{n-1}\}$, then using Lemma \ref{addone} we can deduce that the condition (\ref{2possibi}) implies that $\ell(w')=\ell(sws)\leq \ell(w)$.

\begin{prop}\label{reduce to composition}
For each $w\in W_n$, there exists a composition $\lambda=(\lambda_1,\cdots,\lambda_k)$ of $n$, a sequence $\epsilon\in J^k$ and a sequence $x_1,\cdots,x_m$ of defining generators in $W_{n-1}$, such that $w \overset{(x_1,\cdots,x_m)}{\longrightarrow} w_{\lambda,\epsilon}$.
\end{prop}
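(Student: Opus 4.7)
The plan is to induct on $n$. The base case $n=1$ is immediate: every element of $W_1$ equals $t^a=w_{(1),(a)}$ for some $0\le a\le r-1$, so the empty chain suffices. For the inductive step, given $w\in W_n$, I would first invoke Corollary \ref{cancel1} to write $w=ad_nb$ with $a,b\in W_{n-1}$, $d_n\in\D_n$, and $\ell(w)=\ell(a)+\ell(d_n)+\ell(b)$, and then proceed in three stages.

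The first stage is to reduce to an element of $W_{n-1}d_n$ by stripping letters from $b$ one at a time. I would run a nested induction on $\ell(b)$: if $\ell(b)>0$, then in the DC normal form of $w$ (one of the patterns (1), (2), (3) from Proposition \ref{canonicalform}) the $b$-part has a reduced expression ending in some letter $s\in S\setminus\{s_{n-1}\}$, and Corollary \ref{cancel1} guarantees the valid arrow $w\overset{s}{\rightarrow}sws^{-1}$. A direct case-by-case inspection (extending Cases 1 and 2 in the proof of Corollary \ref{cancel1} to cover each of the three forms) shows that the DC normal form of $sws^{-1}$ has $b$-part of length exactly $\ell(b)-1$. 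Iterating yields a chain $w\rightarrow w_1^{*}d_n$ with $w_1^{*}\in W_{n-1}$, all conjugations being by standard generators of $W_{n-1}$.

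The second stage applies the inductive hypothesis to $w_1^{*}\in W_{n-1}$ to obtain a chain $w_1^{*}\overset{(y_1,\ldots,y_p)}{\longrightarrow}w_{\lambda',\epsilon'}$ in $W_{n-1}$, with each $y_i$ a standard generator of $W_{n-2}$ and $(\lambda',\epsilon')\in\mathcal{C}_{n-1}$. The key observation that allows me to lift this to $W_n$ is that every $y\in W_{n-2}$ commutes with every $d\in\D_n$: viewing $W_n$ as the group of monomial matrices of size $n$, each element of $\D_n=\{1,s_{n-1},s'_{n-1,1},\ldots,s'_{n-1,r-1}\}$ acts as the identity on the first $n-2$ basis vectors, while $W_{n-2}$ fixes $e_{n-1}$ and $e_n$. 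Combined with Corollary \ref{cancel12}, which gives $\ell(ud_n)=\ell(u)+\ell(d_n)$ for $u\in W_{n-1}$, each arrow $w_1^{*}\overset{y_i}{\rightarrow}\cdots$ in $W_{n-1}$ lifts to the corresponding arrow on $w_1^{*}d_n$ in $W_n$, producing a chain $w_1^{*}d_n\overset{(y_1,\ldots,y_p)}{\longrightarrow}w_{\lambda',\epsilon'}d_n$.

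Finally, Lemma \ref{bije1} applied in $W_{n-1}$ writes $w_{\lambda',\epsilon'}=d_1\cdots d_{n-1}$ with $d_i\in\D_i$, and the same lemma applied in $W_n$ identifies $w_{\lambda',\epsilon'}d_n=d_1\cdots d_n\in\Sigma_n$ with $w_{\lambda,\epsilon}$ for a unique corresponding $(\lambda,\epsilon)\in\mathcal{C}_n$. Concatenating the chains from the first and second stages gives the required chain from $w$ to $w_{\lambda,\epsilon}$, using only standard generators of $W_{n-1}$. The main obstacle is the nested induction in the first stage: the assertion that conjugating by the last letter of $b$ reduces the DC normal form's $b$-part by exactly one is not automatic and requires a careful tracking of how each of the three explicit normal form patterns behaves under the relevant conjugation, paralleling but extending the two-case analysis in the proof of Corollary \ref{cancel1}.
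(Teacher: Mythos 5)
Your proof is correct and follows essentially the same approach as the paper's: both strip the $b$-factor of the DC normal form by conjugating by its letters (Corollary~\ref{cancel1}), then recurse on the $W_{n-1}$-factor using the commutativity of $W_{n-2}$ with $\D_n$ together with Corollary~\ref{cancel12} to lift arrows, and finally invoke Lemma~\ref{bije1} to identify $d_1\cdots d_n$ with $w_{\lambda,\epsilon}$. The paper presents this as an unrolled ``repeat the procedure'' iteration rather than as a formal induction on $n$, but the underlying argument is the same.
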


\begin{proof} We consider the DC normal form of $w$ as given in Proposition \ref{canonicalform}. We can write $w=ad_nb$, where $$\begin{aligned}
a&=t_{0,a_0}\cdots t_{n-2,a_{n-2}}\sigma,\quad \sigma\in\Sym_{n-1}, 0\leq a_i\leq r-1,\forall\,0\leq i\leq n-2 ,\\
b&=\begin{cases} \begin{matrix}\text{$s_{n-2}\cdots s_1t^l s_1\cdots s_k$}\\
\text{or $s_{n-2}s_{n-3}\cdots s_{k'}$},\end{matrix} &\text{if $d_n=s_{n-1}$;}\\
1, &\text{if $d_n=1$ or $d_n=s'_{n-1,l}$ for some $1\leq l\leq r-1$,}
\end{cases}
\end{aligned}
$$
where $1\leq k'\leq n-1$, $0\leq k\leq n-2$.

Now applying Corollary \ref{cancel1}, we shows that $w\overset{\sigma_n}{\rightarrow}w'd_n$, where $$\sigma_n=(x_{n1},\cdots,x_{nl_n}), \,\, x_{nj}\in\{t,s_1,\cdots,s_{n-2}\},\,\forall\,1\leq j\leq l_n,\,\, w'\in W_{n-1}.$$  Applying Corollary \ref{cancel1} to $w'$, we can write $$
w'=a'd_{n-1}b',$$
where $a',b'\in W_{n-2}$. In particular, both $a', b'$ commute with $d_n$. Applying Corollaries \ref{cancel1} and \ref{cancel12} again, we can write $w'd_n\overset{\sigma_{n-1}}{\rightarrow}w''d_{n-1}d_n$, where $\sigma_{n-1}$ is a sequence of standard generators in $W_{n-2}$, $w''\in W_{n-2}$.  Repeating this procedure, eventually we arrive that
$$w \overset{\sigma_n\sigma_{n-1}\cdots \sigma_1}{\rightarrow} d_1\cdots d_n,
$$ where $d_1\in\{1,t,t^2,\cdots,t^{r-1}\}$.  Applying Lemma \ref{bije1}, we see that $d_1\cdots d_n= w_{\lambda,\epsilon}$ for some composition $\lambda=(\lambda_1,\cdots,\lambda_k)$ of $n$ and a sequence $\epsilon=(\epsilon_1,\cdots,\epsilon_k)\in J^k$. We are done.
\end{proof}

\begin{lem}\label{symmetric reduce} Let $j\in\Z_{\geq 0}$ and $w_j\in W_j$. Suppose $$\begin{aligned}
& w=(s_{j+1}\cdots s_{j+m})(s_{j+m+2}\cdots s_{j+m+k+1}),\\
& u=(s_{j+1}\cdots s_{j+k})(s_{j+k+2}\cdots s_{j+k+m+1}).
\end{aligned} $$
Then \begin{enumerate}
\item there exists $y\in \Sym_{\{j+1,\cdots,j+m+k+2\}}$ such that $y^{-1}wy=u$ and $\ell(wy)=\ell(w)+\ell(y)$;
\item Moreover, $y^{-1}w_jwy=w_ju$, $\ell(w_jw)=\ell(w_ju)=\ell(w_j)+m+k$ and $\ell(w_jwy)=\ell(w_jw)+\ell(y)=\ell(w_j)+\ell(y)+m+k$.
\end{enumerate}
\end{lem}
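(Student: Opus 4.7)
The plan is to read both $w$ and $u$ as elements of the symmetric group $\Sym_{\{j+1,\dots,j+m+k+2\}}$, each being a product of two disjoint cycles. Indeed $w$ is the product of the cycle $(j{+}1,\dots,j{+}m{+}1)$ of length $m{+}1$ and the cycle $(j{+}m{+}2,\dots,j{+}m{+}k{+}2)$ of length $k{+}1$, while $u$ has the same cycle structure but on the swapped blocks: a cycle of length $k{+}1$ on $\{j{+}1,\dots,j{+}k{+}1\}$ and a cycle of length $m{+}1$ on $\{j{+}k{+}2,\dots,j{+}k{+}m{+}2\}$. In particular $\ell(w)=\ell(u)=m+k$, since in each case the concatenation of the two disjoint reduced cyclic words is itself reduced.

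For part (1), I will take $y\in\Sym_{\{j+1,\dots,j+m+k+2\}}$ to be the block-swap permutation defined by $y(j+a)=j+m+1+a$ for $1\le a\le k+1$ and $y(j+k+1+a)=j+a$ for $1\le a\le m+1$. A direct conjugation of the two cycles of $w$ then yields $y^{-1}wy=u$. For the length identity $\ell(wy)=\ell(w)+\ell(y)$, I will apply the standard characterization in $\Sym_n$ that $\ell(\sigma\tau)=\ell(\sigma)+\ell(\tau)$ if and only if $\mathrm{inv}(\sigma)\cap\mathrm{inv}(\tau^{-1})=\emptyset$. Reading off one-line notations shows that the inversions of $w$ all lie \emph{within} one of the two blocks $\{j{+}1,\dots,j{+}m{+}1\}$ and $\{j{+}m{+}2,\dots,j{+}m{+}k{+}2\}$ (namely $(j{+}a,j{+}m{+}1)$ for $1\le a\le m$ and $(j{+}m{+}1{+}a,j{+}m{+}k{+}2)$ for $1\le a\le k$), while the inversions of $y^{-1}$ are exactly the cross-block pairs $(a,b)$ with $a$ in the first block and $b$ in the second. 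Disjointness of these two inversion sets is immediate, giving $\ell(y)=(m+1)(k+1)$ and $\ell(wy)=(m+k)+(m+1)(k+1)$.

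For part (2), observe that $w_j\in W_j$ acts trivially on the support $\{j+1,\dots,j+m+k+2\}$ of $y$, hence $w_jy=yw_j$; combined with $y^{-1}wy=u$ this immediately yields $y^{-1}w_jwy=w_j(y^{-1}wy)=w_ju$. For the length identities, fix a BM normal form $w_j=t_{0,a_0}\cdots t_{j-1,a_{j-1}}\sigma_j$ with $\sigma_j\in\Sym_j$ (Lemma \ref{BM2}). Since $\sigma_j\in\Sym_j$ and $wy\in\Sym_{\{j+1,\dots,n\}}$ have disjoint supports inside $\Sym_n$, the product $\sigma_jwy$ is automatically reduced in $\Sym_n$ with $\ell(\sigma_jwy)=\ell(\sigma_j)+\ell(wy)$; hence the expression $t_{0,a_0}\cdots t_{j-1,a_{j-1}}\sigma_jwy$ is already a BM normal form for $w_jwy$, and Lemma \ref{BM2} gives $\ell(w_jwy)=\ell(w_j)+\ell(wy)=\ell(w_j)+\ell(y)+m+k$. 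Running the same argument with $y$ removed, and separately with $w$ replaced by $u$, yields $\ell(w_jw)=\ell(w_ju)=\ell(w_j)+m+k$.

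The only step requiring a little care is the inversion-set bookkeeping for $w$ and $y^{-1}$ in part (1); beyond that, the argument is a direct application of the BM normal form (Lemma \ref{BM2}) and elementary inversion calculus in the symmetric group.
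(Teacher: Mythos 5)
Your proof is correct and takes essentially the same route as the paper's. For Part (2), commutativity of $W_j$ with $\Sym_{\{j+1,\dots,n\}}$ plus the BM normal form (Lemma \ref{BM2}) is exactly the paper's argument; for Part (1), where the paper simply cites \cite[Proposition 2.4(a)]{GP}, you have unwound that citation into a self-contained construction of the block-swap $y$, verifying $y^{-1}wy=u$ directly and obtaining $\ell(wy)=\ell(w)+\ell(y)$ from the disjointness of the within-block inversion set of $w$ and the cross-block inversion set of $y^{-1}$.
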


\begin{proof} Part (1) of the lemma follows from \cite[Proposition 2.4(a)]{GP}. Note that both $y$ and $u$ commute with any element in $W_j$.
Thus Part (2) of the lemma follows from Lemma \ref{BM2}.
\end{proof}

The proof of the following lemma is given in the appendix of this paper.

\begin{lem}\label{r reduce} Let $m,\,k,\,j\in\Z_{\geq 0}, x\in\Sym_{j}$.
\begin{enumerate}
\item[(a)] For any $l\in \{1,\cdots,r-1\}$, we define $$\begin{aligned}
w(1)&:=s_{j+1}\cdots s_{j+m}s'_{j+m+1,l}s_{j+m+2}\cdots s_{j+m+k+1}x\\
v(1)&:=s'_{j,l}s_{j+1}\cdots s_{j+k}s_{j+k+2}\cdots s_{j+k+m+1}x.
\end{aligned}$$
\item[(b)] Assume $m>k\geq 0$. For any $l_1,\,l_2\in \{1,\cdots,r-1\}$, we define $$\begin{aligned}
w(2)&:=(s'_{j,l_1}s_{j+1}\cdots s_{j+m})(s'_{j+m+1,l_2}s_{j+m+2}\cdots s_{j+m+k+1})x\\
v(2)&:=(s'_{j,l_2}s_{j+1}\cdots s_{j+k})(s'_{j+k+1,l_1}s_{j+k+2}\cdots s_{j+k+m+1})x.
\end{aligned}$$
\item[(c)] Assume $m\geq 0$. For any $l_1,\,l_2\in \{1,\cdots,r-1\}$, we define $$\begin{aligned}
w(3)&:=(s'_{j,l_1}s_{j+1}\cdots s_{j+m})(s'_{j+m+1,l_2}s_{j+m+2}\cdots s_{j+2m+1})x\\
v(3)&:=(s'_{j,l_2}s_{j+1}\cdots s_{j+m})(s'_{j+m+1,l_1}s_{j+m+2}\cdots s_{j+2m+1})x.
\end{aligned}$$
\end{enumerate}
Let $c\in\{1,2,3\}$. There exists a sequence $s_{i_1},\cdots,s_{i_b}$ of standard generators in $\Sym_{\{j+1,j+2,\cdots,j+m+k+2\}}$ if $c\in\{1,2\}$, or in $\Sym_{\{j+1,j+2,\cdots,j+2m+2\}}$ if $c=3$, such that $$
w(c)=w_1\overset{s_{i_1}}{\rightarrow}w_2\overset{s_{i_2}}{\rightarrow}\cdots\overset{s_{i_b}}{\rightarrow}w_{b+1}=v(c). $$
\end{lem}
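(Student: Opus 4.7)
The strategy is to treat the three cases uniformly by exhibiting, for each $c\in\{1,2,3\}$, an explicit permutation $y$ in the symmetric subgroup indicated in the statement such that $y^{-1}w(c)y=v(c)$, together with a specific reduced expression $y=s_{i_1}\cdots s_{i_b}$ whose successive partial conjugations realize the required sequence $w(c)=w_1\overset{s_{i_1}}{\rightarrow}w_2\overset{s_{i_2}}{\rightarrow}\cdots\overset{s_{i_b}}{\rightarrow}w_{b+1}=v(c)$. Since $x\in\Sym_j$ commutes with every simple reflection $s_i$ with $i>j$ and with every $s'_{h,l}$ with $h\ge j$, the tail $x$ is inert throughout the procedure, and one may reduce to the case $x=1$.

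For case (a), identifying the permutation $y$ is straightforward: viewed through its underlying permutation type, $w(1)$ is the product of an uncolored $(m+1)$-cycle on $\{j+1,\ldots,j+m+1\}$ together with an $l$-colored $(k+1)$-cycle on $\{j+m+2,\ldots,j+m+k+2\}$, whereas $v(1)$ is the same pair of colored cycles but with supports interchanged, so $y$ is precisely the block-swap appearing in Lemma \ref{symmetric reduce}(1). Taking the reduced expression of $y$ produced there -- which slides $j+m+2$ leftward past $j+m+1,\ldots,j+k+2$, then $j+m+3$ leftward past $j+m+2,\ldots,j+k+3$, and so on -- I would verify, for each successive simple transposition $s_i$ and current intermediate word $w_j$, that either $\ell(s_iw_j)<\ell(w_j)$ or $\ell(w_js_i)<\ell(w_j)$ and that $\ell(s_iw_js_i)\le \ell(w_j)$, by rewriting $w_j$ in BM normal form via Lemma \ref{BM2}. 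For case (c) (equal block lengths), the permutation $y$ is the full perfect shuffle of $\{j+1,\ldots,j+2m+2\}$ and the net effect of conjugation is to exchange the colors $l_1,l_2$ between the two blocks; the same slide-by-slide procedure applies, with the crucial observation that equal block lengths permit $t^{l_1}$ and $t^{l_2}$ to cross each other via the relation \eqref{wbrela} without lengthening the intermediate word. Case (b) combines a block-swap with a simultaneous color swap, and can be handled either by a direct iteration of the case (a) argument applied to each colored hook in turn, or, alternatively, by first applying a case (a)-type move to equalize the two block lengths and then invoking a case (c)-type color swap.

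The principal obstacle, and the reason the proof is deferred to the appendix, is that for $r>2$ the naive length function on $W_n$ satisfies neither the Exchange nor the Deletion Condition, so the length non-increase at each elementary conjugation step cannot be argued abstractly and must be checked by explicit normal-form manipulations. For each candidate $s_i$, one must write the current word in BM normal form, push $s_i$ across the hook $s'_{h,l}=s_h\cdots s_1t^ls_1\cdots s_h$ using braid relations together with \eqref{wbrela}, collapse any adjacent $t$-powers modulo $t^r=1$, and then read off the length from the resulting normal form to confirm either $\ell(s_iw_j)<\ell(w_j)$ or $\ell(w_js_i)<\ell(w_j)$. The bookkeeping is routine but lengthy, which is exactly why the full case analysis is placed in the appendix of the paper.
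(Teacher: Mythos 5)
Your proposal identifies the correct strategy, which is indeed the one the paper uses: exhibit the conjugating permutation $y$ as the block-swap of \cite[Prop.~2.4]{GP} (our Lemma~\ref{symmetric reduce}), take a specific reduced expression for it, and verify the two length conditions at each elementary step by passing to the BM normal form. You also correctly diagnose why the verification cannot be waved away: for $r>2$ the naive length function lacks the Exchange and Deletion Conditions, so ``$\ell(sws)=\ell(w)$ forces $\ell(sw)<\ell(w)$ or $\ell(ws)<\ell(w)$'' is not automatic and must be read off the normal form.

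That said, the proposal stops precisely where the real work begins. The content of Lemma~\ref{r reduce} is the chain of intermediate words $w_i$ (and, in cases (b), (c), the auxiliary chains $x_i$, $v_i$) together with the slide-by-slide length computations; none of this appears in the proposal, which simply asserts that the bookkeeping is ``routine.'' That assertion is not self-evident: the intermediate normal forms change shape non-trivially across the slide, and the paper's appendix spends several pages precisely on this verification using Lemma~\ref{BM1} and Lemma~\ref{BM2}. Two further details are off. First, your ``alternative'' route for case (b) --- equalize block lengths by a case (a) move, then color-swap as in case (c) --- does not apply as stated, since case (a) requires the first block to be \emph{uncolored}, whereas both blocks in case (b) carry colors; the paper handles (b) directly with three nested chains of intermediate words. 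Second, you invoke collapsing $t$-powers modulo $t^r=1$ and the relation \eqref{wbrela} as the color-crossing mechanism; in fact this lemma never uses $t^r=1$ (the colorings stay in $\{1,\dots,r-1\}$ throughout), and the device that actually exchanges colors is Lemma~\ref{BM1}(2)--(3), i.e.\ $t_{k+m,a}t_{k,b}=t_{k-1,b}t_{k+m,a}s_1$, not \eqref{wbrela} directly. Finally, the claim that one may ``reduce to $x=1$'' needs an argument: the hooks $s'_{h,l}$ involve $s_1,\dots,s_j$ and $t$, so commutation of $x$ with the rest is a group-level fact, and the additivity $\ell(ux)=\ell(u)+\ell(x)$ must be extracted from the BM normal form (the paper instead carries $x$ along in every displayed expression).
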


Henceforth, for each conjugacy class $C$ of $W$, we use $C_{\min}$ to denote the set of minimal length elements in $C$.

\begin{thm}\label{minimal length elements}
We have that  \begin{enumerate}
\item there exists a unique $\beta_C\in\CP_n$ such that $w_{\beta_C}\in C$. Moreover, $w_{\beta_C}\in C_{\min}$;
\item for any $w\in W$, there exists some $\alpha\in\CC_n$ such that $w\rightarrow w_\alpha$;
\item for any $\alpha\in\CC_n$, $w_\alpha$ is a minimal length element in its conjugacy class.
\end{enumerate}
\end{thm}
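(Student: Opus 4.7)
The plan is to prove the three parts in the order (2), (1), (3), exploiting the reduction of arbitrary elements to normal-form representatives $w_{\lambda,\epsilon}$ together with a careful bubble-sort on the blocks of such representatives. The key inputs are Proposition~\ref{reduce to composition}, which provides a length-nonincreasing chain $w \to w_{\lambda,\epsilon}$ for any $w \in W_n$, and Lemmas~\ref{symmetric reduce} and~\ref{r reduce}, which let us transpose two adjacent blocks of $w_{\lambda,\epsilon}$ (possibly rearranging their colors) via conjugation chains that never increase the length.

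For part (2), I first apply Proposition~\ref{reduce to composition} to obtain $w \to w_{\lambda,\epsilon}$. I then perform adjacent-block swaps on $(\lambda,\epsilon)$, splitting into four cases: two uncolored blocks (handled by Lemma~\ref{symmetric reduce}); one colored and one uncolored block (Lemma~\ref{r reduce}(a)); two colored blocks of distinct sizes (Lemma~\ref{r reduce}(b)); and two colored blocks of equal size, where the swap also exchanges the colors (Lemma~\ref{r reduce}(c)). Using these moves, I can push all colored blocks to the left and sort them into nondecreasing size order, and within each size class rearrange the colors to be weakly decreasing, leaving the uncolored blocks on the right as a composition $\mu$. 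This produces $w \to w_\alpha$ with $\alpha = (\lam',\mu) \in \CC_n$.

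For part (1), existence follows by applying part (2) and then further using Lemma~\ref{symmetric reduce} to sort the parts of $\mu$ into a partition, yielding $w \to w_\beta$ with $\beta \in \CP_n$ lying in $C$. For uniqueness, I use the classical fact that the conjugacy classes of the wreath product $W_n \cong (\Z/r\Z) \wr \Sym_n$ are parametrized by $\mathscr{P}_{r,n}$; by Lemma~\ref{bije12} we have $|\CP_n| = |\mathscr{P}_{r,n}| = |{\rm Cl}(W_n)|$, so the surjection $\CP_n \twoheadrightarrow {\rm Cl}(W_n)$, $\beta \mapsto [w_\beta]$, is forced to be a bijection, giving the unique $\beta_C$.

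For part (3), I observe that the length $\ell(w_\alpha)$ for $\alpha \in \CC_n$ can be read off directly from the explicit reduced expression provided by Lemma~\ref{BM2} applied to the definition (\ref{walpha}), and depends only on the multiset of block sizes and colors, hence only on the associated $\beta \in \CP_n$. Now given any $w' \in C$, part (2) yields $w' \to w_{\alpha'}$ with $\alpha' \in \CC_n$ and $[w_{\alpha'}] = C$; both $\alpha$ and $\alpha'$ map to $\beta_C$ under the sorting procedure of part (1), so $\ell(w_\alpha) = \ell(w_{\beta_C}) = \ell(w_{\alpha'}) \leq \ell(w')$. The main obstacle in the whole argument is the verification of Lemma~\ref{r reduce}, whose proof is deferred to the appendix: since $W_n$ admits no usable Exchange or Deletion condition with respect to the naive length, each colored-block transposition must be realized by an explicit chain of length-preserving conjugations, and the three sub-cases (a)--(c) reflect the distinct ways in which the order-$r$ generator $t$ interacts with cycle merging.
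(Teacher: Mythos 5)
Your proposal follows the same three-stage strategy as the paper: reduce to a representative $w_{\lambda,\epsilon}$ via Proposition~\ref{reduce to composition}, bubble-sort adjacent blocks using Lemma~\ref{r reduce} (with the three sub-cases you correctly match to the relevant block configurations) to land in $\CC_n$, then use Lemma~\ref{symmetric reduce} together with the counting argument via the known parametrization of ${\rm Cl}(W_n)$ by $\mathscr{P}_{r,n}$ and Lemma~\ref{bije12} to get existence and uniqueness of $\beta_C$ and minimality. The only small variation is that for part~(3) you deduce length-invariance across $\alpha\mapsto\beta_C$ by reading the length formula off the BM normal form directly, whereas the paper gets the same fact as a by-product of the length-preserving chain in Lemma~\ref{symmetric reduce}; both routes are valid and the rest of the argument is essentially identical.
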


\begin{proof}
We divide the proof into three steps.

\medskip
{\it Step 1.} By Proposition \ref{reduce to composition}, for any $w\in W$, there exists a composition $\lambda=(\lambda_1,\cdots,\lambda_k)$ of $n$, and $\epsilon\in J^k$, such that $w \rightarrow w_{\lambda,\epsilon}$. Hence we reduce to the elements of the form $ w_{\lambda,\epsilon}$.

\medskip
{\it Step 2.} Let $\lambda=(\lambda_1,\cdots,\lambda_k)$ be a composition of $n$ and $\epsilon=(\epsilon_1,\cdots,\epsilon_k)\in J^k$ where $J=\{0,1,\cdots,r-1\}$. Let $1\leq l < k$. We set $$\begin{aligned}
& s_l\lambda:=(\lambda_1,\cdots,\lambda_{l+1},\,\lambda_l,\,\cdots\lambda_k), \,\,s_l\epsilon:=(\epsilon_1,\cdots,\epsilon_{l+1},\,\epsilon_l,\,\cdots,\epsilon_k),\\
& w_{\lam,\epsilon}^{\geq l+2}:=\Bigl(\prod_{i=l+2}^{k}w_{\lambda,\epsilon,i}\bigr).
\end{aligned}
$$
Now using the definition of $w_{\lambda,\epsilon,i}$ given in \eqref{wlam} and the defining relations of $W_n$, we can find some $x\in \Sym_{r_l}$ such that $$\begin{aligned}
&\prod_{i=1}^{k}w_{\lambda,\epsilon,i}=\Bigl(\prod_{i=1}^{l+1}w_{\lambda,\epsilon,i}\bigr)w_{\lam,\epsilon}^{\geq l+2}=
\Bigl(t_{r_1,\epsilon_1}\cdots t_{r_{l-1},\epsilon_{l-1}}w_{\lambda,\epsilon,l}w_{\lambda,\epsilon,l+1}x\Bigr)w_{\lam,\epsilon}^{\geq l+2},\\
&\prod_{i=1}^{k}w_{s_l\lambda,s_l\epsilon,i}=\Bigl(\prod_{i=1}^{l+1}w_{s_l\lambda,s_l\epsilon,i}\bigr)w_{\lam,\epsilon}^{\geq l+2}=\Bigl(t_{r_1,\epsilon_1}\cdots t_{r_{l-1},\epsilon_{l-1}}w_{\lambda,\epsilon,l+1}w_{\lambda,\epsilon,l}x\Bigr)w_{\lam,\epsilon}^{\geq l+2}.
\end{aligned}
$$
Using Corollary \ref{cancel12}, it is easy to see that $\ell(yw_{\lam,\epsilon}^{\geq l+2})=\ell(y)+\ell(w_{\lam,\epsilon}^{\geq l+2})$ for any $y\in W_{r_{l+2}}$. If $w_{\lam,\epsilon}=w_\alpha$ for some $\alpha\in\CC_n$, then we go to Step 3; otherwise we can find $1\leq l < k$ and $i\in\{1,2,3\}$, such that $$ w_{\lambda,\epsilon,l}w_{\lambda,\epsilon,l+1}x=w(i),\,w_{\lambda,\epsilon,l+1}w_{\lambda,\epsilon,l}x=v(i),
$$
where $v(i), w(i)$ are as defined in Lemma \ref{r reduce}. In this case we can use Lemma \ref{r reduce} and Corollary \ref{cancel12} to see that $$w_{\lambda,\epsilon}\rightarrow w_{s_l\lambda,s_l\epsilon}.
$$
Next, we replace $(\lam,\epsilon)$ with $(s_l\lam, s_l\epsilon)$ and repeat the argument from the beginning of Step 2. After finite steps, we can eventually show that $w_{\lambda,\epsilon}\rightarrow w_\alpha$ for some colored semi-bicomposition $\alpha=(\lam,\mu)\in\CC_n$.

\medskip
{\it Step 3.} It remains to show that each element $w_{\alpha}$, where $\alpha=(\lam,\mu)\in\CC_n$ with color $$
\epsilon=(\epsilon_1,\cdots,\epsilon_{\ell(\lam)},\underbrace{0,\cdots,0}_{\text{$\ell(\mu)$ copies}})\in J^{\ell(\lam)+\ell(\mu)},
$$ is a minimal length element in the conjugacy class of $w_{\alpha}$. Set $m:=|\lam|$ and $\epsilon(1)=(\epsilon_1,\cdots,\epsilon_{\ell(\lam)})$. In particular, $m\geq 1$. We can first decompose $w_{\alpha}=w_{\alpha,1}w_{\alpha,2}$, where $w_{\alpha,1}:=w_{\lam,\epsilon(1)}\in W_m$ corresponds to the opposite partition $\lam$, and $w_{\alpha,2}:=w_{\mu,(0,\cdots,0)}\in \Sym_{\{m+1,\cdots,n\}}$ corresponds to $\mu$.

Applying Lemma \ref{symmetric reduce} to $w_{\alpha,2}$, we can deduce that there exist $u_1,\cdots,u_b\in\Sym_{\{m+1,\cdots,n\}}$ such that  $v_{i+1}=u^{-1}_iv_{i}u_i$ and $\ell(v_i)=\ell(v_{i+1})$, for each $1\leq i<b$, and $v_0=w_{\alpha,2}$, $v_b=w_{\rho,(0,0,\cdots,0)}$ for some partition $\rho\in\mathscr{P}_{n-m}$. In particular, $\ell(w_{\alpha,2})=\ell(w_{\mu,(0,\cdots,0)})=\ell(w_{\rho,(0,0,\cdots,0)})$.
Our above proof from Step 1 to Step 3 implies that each conjugacy class $C$ of $W_n$ contains at least one element of the form $w_\beta$ with $\beta\in\CP_n$. On the other hand, it is well-known that the conjugacy classes of $W_n$ are in bijection with the set $\mathscr{P}_{r,n}$ of $r$-partition of $n$ (\cite[Remark 3.4]{Can}) and hence in bijection with the set $\CP_n$ by Lemma \ref{bije12}. It follows that each conjugacy class $C$ of $W_n$ contains a unique element of the form $w_\beta$ with $\beta\in\CP_n$. We denote it by $\beta_C$. Now we start from any minimal length element in the conjugacy class $C$, the above proof from Step 1 to Step 3 implies that $w_{\alpha}, w_{\beta_C}\in C_{\min}$. This proves Parts (1) and (2) of the theorem. Finally, the beginning of this paragraph proves that for each $\alpha\in\CC_n$, we can find a $\beta_C\in\CP_n$ such that $\ell(w_\alpha)=\ell(w_{\beta_C})$. Thus Part 3) of the theorem also follows.
\end{proof}

\medskip
\noindent
{\bf Proof of Theorem \ref{mainthm1}:} This follows from Theorem \ref{minimal length elements}.
\hfill\qed

\bigskip

\section{Cocenters of cyclotomic Hecke algebra}

The purpose of this section is to prove that the cocenter $\Tr(\HH_{n,R})$ is always a free $R$-module with an $R$-basis labelled by representatives of minimal length element in conjugacy classes when $R$ is commutative domain. As a consequence, we shall give a proof of Theorem \ref{mainthm2}.

Let $\HH_{n,R}$ be the cyclotomic Hecke algebra of type $G(r,1,n)$ with Hecke parameter $\xi\in R^\times$ and cyclotomic parameters $Q_1,\cdots,Q_r\in R$ and defined over a commutative (unital) ring $R$.

Let $w\in W_n$. If ${\bf w}=x_{i_1}\cdots x_{i_k}$ is a reduced expression of $w$, where $$
x_{i_1},\cdots,x_{i_k}\in\{t,s_1,\cdots,s_{n-1}\},$$ then we define $$
T_{\bf{w}}:=T_{x_{i_1}}\cdots T_{x_{i_k}}.
$$

\begin{lem}[\text{\cite[Prop 2.4]{BM}}]\label{expression}
For each $w\in W_n$, let ${\bf w},{\bf w'}$ be two reduced expressions of $w$, for any $y\in W_n$ with $\ell(y)<\ell(w)$, we fix a reduced expression $\bf{y}$ of $y$ and use it to define $T_{\bf{y}}$. Then $$T_{\bf{w}}-T_{\bf{w'}}\in \sum_{\substack{y\in W_n\setminus\Sym_n\\ 0<\ell(\bf{y})<\ell(w)}} RT_{\bf{y}}.
 $$
\end{lem}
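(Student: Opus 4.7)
The plan is to reduce the statement to the fact (Lemma~\ref{BM2}) that any two reduced expressions of $w$ are connected by a chain of weak braid-moves consisting of the ordinary braid relations together with the auxiliary relation (\ref{wbrela}), and then to track how each type of move affects the product $T_{\bf w}$ in $\HH_{n,R}$. The ordinary braid relations $T_0T_1T_0T_1=T_1T_0T_1T_0$, $T_iT_{i+1}T_i=T_{i+1}T_iT_{i+1}$, and the commutations $T_iT_j=T_jT_i$ for $|i-j|>1$ all hold \emph{exactly} in $\HH_{n,R}$, so any weak braid-move of these types leaves $T_{\bf w}$ unchanged. Hence only moves of type (\ref{wbrela}) can contribute to the difference $T_{\bf w}-T_{\bf w'}$, reducing the problem to a local identity.

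The local claim I would isolate is: for $1\le a,b\le r-1$,
\[
\Delta_{a,b}:=T_1T_0^aT_1T_0^b-T_0^bT_1T_0^aT_1\in\sum_{\substack{y'\in W_n\setminus\Sym_n\\ \ell(y')<a+b+2}}R\,T_{y'}.
\]
I would prove this by induction on $a+b$. The case $a=1$ (any $b$) is immediate, because the braid relation shows $T_1T_0T_1=\xi\mL_2$ commutes with $T_0=\mL_1$, and hence with every power $T_0^b$, so $\Delta_{1,b}=0$; by the symmetric role of $a$ and $b$ in (\ref{wbrela}) the case $b=1$ reduces to the same fact. For the inductive step one writes $T_1T_0^aT_1=(T_1T_0T_1)\bigl(T_1^{-1}T_0^{a-1}T_1\bigr)$, then substitutes the quadratic-relation expansion $T_1^{-1}=\xi^{-1}T_1+\xi^{-1}(1-\xi)$ and slides the central factor $T_1T_0T_1$ past the $T_0$-powers using the braid relation. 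Each resulting summand is a scalar multiple of a $T$-monomial whose $T_1$-content has dropped, whose word in $W_n$ is strictly shorter than $a+b+2$, and which still contains at least one $T_0$-factor (hence lies outside $\Sym_n$); the remaining $(a'+b')$-terms with $a'+b'<a+b$ are then handled by the inductive hypothesis.

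To globalise, if $\bf w={\bf u}_1(s_1t^as_1t^b){\bf u}_2$ and ${\bf w}'={\bf u}_1(t^bs_1t^as_1){\bf u}_2$ differ by a single application of (\ref{wbrela}), then
\[
T_{\bf w}-T_{\bf w'}=T_{{\bf u}_1}\,\Delta_{a,b}\,T_{{\bf u}_2}.
\]
Inserting the local identity and expanding each product $T_{{\bf u}_1}T_{y'}T_{{\bf u}_2}$ via Lemma~\ref{stdBasis} as an $R$-linear combination of $T$-products of reduced expressions, one sees that every resulting word $z$ satisfies $\ell(z)\le\ell({\bf u}_1)+\ell(y')+\ell({\bf u}_2)<\ell(w)$ and still carries a factor of $T_0$, hence $z\in W_n\setminus\Sym_n$. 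Iterating along an arbitrary chain of weak braid-moves between two reduced expressions of $w$ yields the lemma.

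The main obstacle is the simultaneous control of two invariants during the induction: one must ensure that the reductions via $T_1^2=(\xi-1)T_1+\xi$ and, when $T_0$-powers accumulate, via the cyclotomic relation $(T_0-Q_1)\cdots(T_0-Q_r)=0$, produce terms which (i) have strictly smaller BM length and (ii) still involve the generator $t$. The second point is delicate because the cyclotomic relation replaces $T_0^r$ by a combination of $T_0^i$, $0\le i<r$, so the $T_0$-content may drop all the way down to zero and thereby fall into $\Sym_n$; a direct book-keeping with the BM normal form of Lemma~\ref{BM2} is needed to exclude this, since in the absence of Deletion and Exchange for the naive length (as emphasised after Theorem~\ref{mainthm1}) no standard Coxeter-theoretic shortcut is available.
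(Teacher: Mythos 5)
The paper cites \cite[Prop.~2.4]{BM} for this lemma without reproving it, so there is no in-text argument to compare against; I evaluate your attempt on its own terms. Your strategy of reducing via Lemma~\ref{BM2} to moves of type (\ref{wbrela}) and analysing $\Delta_{a,b}=T_1T_0^aT_1T_0^b-T_0^bT_1T_0^aT_1$ is sensible, and the base case $a=1$ (where $T_1T_0T_1$ commutes with $T_0^b$) is correct. But the assertion that ``the case $b=1$ reduces to the same fact'' is false. The group identity $s_1t^as_1=(s_1ts_1)^a$ uses $s_1^2=1$ and does not survive in $\HH_{n,R}$: writing things with Jucys--Murphy elements, one finds $\Delta_{a,1}=(\xi-1)\bigl[\mL_1^a\mL_2,T_1\bigr]$, and since $\mL_1^a\mL_2$ is not symmetric in $\mL_1,\mL_2$ when $a\geq 2$ this is generically nonzero; concretely $\Delta_{2,1}=(\xi-1)\bigl(T_0^2T_1T_0-T_0T_1T_0^2\bigr)$, which already refutes the claim. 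So your induction only has the genuine base case $a=1$, and your description of the reduction should be corrected accordingly.

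More seriously, the book-keeping you flag in your last paragraph is precisely the content of the lemma and is left undone. The cyclotomic relation $\prod_{i}(T_0-Q_i)=0$ has unit constant term $\pm Q_1\cdots Q_r$, so rewriting $T_0^r$ genuinely does introduce $T_0$-free contributions; some concrete mechanism is needed (for instance, keeping each intermediate expression written as an explicit multiple of $(T_1T_0T_1)^k$ so it stays in the two-sided ideal generated by $T_0$, or working with the basis $L_1^{c_1}\cdots L_n^{c_n}T_w$ of Lemma~\ref{stdBasis} and tracking the exponent vector $(c_1,\dots,c_n)$) to see that these $T_0$-free pieces cancel or stay multiplied by a surviving $T_0$-factor. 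You also invoke Lemma~\ref{stdBasis} in the globalisation, but that lemma furnishes the $\{L_1^{c_1}\cdots L_n^{c_n}T_w\}$ basis, not $\{T_{\mathbf w}\mid w\in W_n\}$, so it does not directly yield the claimed expansion of $T_{{\mathbf u}_1}T_{y'}T_{{\mathbf u}_2}$. As written the proof has an incorrect half of the base case and an unresolved gap at exactly the step that is responsible for keeping the terms outside $\Sym_n$.
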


By \cite{AK} we know that $\HH_{n,R}$ is a free $R$-module of rank $|W_n|$. For each $w\in W_n$, we fix a reduced expression ${\bf w}$ of $w$ and use it to define $T_{\bf w}$,  then it follows from Lemma \ref{expression} that $\{T_{\bf w}|w\in W_n\}$ forms an $R$-basis of $\HH_{n,R}$.

\begin{dfn} For each $\beta=(\lam,\mu)\in\CP_n$, we fix a reduced expression $\bf{w}_\beta$ of $w_\beta$ and define $T_{w_{\beta}}:=T_{\bf{w}_\beta}$.
\end{dfn}
Recall that ${\rm{Cl}}(W_n)$ denotes the set of conjugacy classes of $W_n$. For each $C\in {\rm{Cl}}(W_n)$, $C_{\min}$ denotes the set of minimal length elements in the conjugate class $C$.

\begin{thm}\label{span} Let $R$ be any commutative unital ring. As an $R$-module, we have \begin{equation}\label{generator1}
\Tr(\HH_{n,R})=\text{\rm $R$-Span}\bigl\{T_{w_{\beta}}+[\HH_{n,R},\HH_{n,R}]\bigm|\beta\in\CP_n\bigr\} .
\end{equation}
Moreover, for each conjugacy class $C$ of $W_n$, we arbitrarily choose an element $w_C\in C_{\min}$ and fix a reduced expression $x_1\cdots x_k$ of $w_{C}$, and define $T_{w_{C}}:=T_{x_1}\cdots T_{x_k}$, then \begin{equation}\label{generator0}
\Tr(\HH_{n,R})=\text{\rm $R$-Span}\bigl\{T_{w_{C}}+[\HH_{n,R},\HH_{n,R}]\bigm|C\in{\rm{Cl}}(W_n)\bigr\} .
\end{equation}
\end{thm}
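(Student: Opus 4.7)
The plan is to prove \eqref{generator1} by strong induction on $\ell(w)$: for every $w \in W_n$ and every reduced expression $\mathbf{w}$ of $w$, I will show that the class $T_{\mathbf{w}} + [\HH_{n,R},\HH_{n,R}]$ lies in the $R$-span of $\{T_{w_\beta} + [\HH_{n,R},\HH_{n,R}] \mid \beta \in \CP_n\}$. Since $\{T_{\mathbf{w}} \mid w \in W_n\}$ is an $R$-basis of $\HH_{n,R}$, this yields the spanning statement \eqref{generator1}. The base case $\ell(w)=0$ is immediate.

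The technical core will be a \emph{cocenter reduction lemma}: if $w \overset{s}{\to} w'$ with $s \in S$, then for any reduced expressions $\mathbf{w}$ of $w$ and $\mathbf{w}'$ of $w'$,
\[
T_{\mathbf{w}} - T_{\mathbf{w}'} \in [\HH_{n,R},\HH_{n,R}] + \sum_{\substack{y\in W_n\\ \ell(y)<\ell(w)}} R\cdot T_{\mathbf{y}}.
\]
To prove it, I assume without loss of generality that $\ell(ws^{-1})<\ell(w)$, and set $u = ws^{-1}$, so that $\ell(u) = \ell(w)-1$ by Lemma \ref{addone}. By Lemma \ref{expression}, $T_{\mathbf{w}}$ may be replaced by $T_{\mathbf{u}} T_s$ for a reduced expression $\mathbf{u}$ of $u$, modulo a combination of lower-length basis elements. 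Since $T_{\mathbf{u}} T_s \equiv T_s T_{\mathbf{u}} \pmod{[\HH_{n,R},\HH_{n,R}]}$ and $w' = sws^{-1} = su$, if $\ell(w')=\ell(w)$ then $s\cdot\mathbf{u}$ is a reduced expression of $w'$ and $T_s T_{\mathbf{u}} \equiv T_{\mathbf{w}'}$ modulo lower-length terms. If $\ell(w')<\ell(w)$, then expanding $T_s T_{\mathbf{u}}$ via the quadratic relation $(T_{s_i}-\xi)(T_{s_i}+1)=0$ (when $s=s_i$) or via the order-$r$ relation $(T_0-Q_1)\cdots(T_0-Q_r)=0$ (when $s=t$), together with an analysis of the BM normal form of $u$, produces only $R$-linear combinations of basis elements of strictly smaller length.

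With the cocenter reduction lemma in hand, the inductive step runs as follows: Theorem \ref{mainthm1} produces a chain $w = w_0 \to w_1 \to \cdots \to w_m = w_\alpha$ for some $\alpha \in \CC_n$; iterating the reduction lemma along this chain shows that $T_{\mathbf{w}}$ is congruent modulo $[\HH_{n,R},\HH_{n,R}]$ to $T_{\mathbf{w_\alpha}}$ plus a combination of $T_{\mathbf{y}}$ with $\ell(y)<\ell(w)$, the latter disposed of by the induction hypothesis. A further length-preserving chain $w_\alpha \to w_{\beta_C}$, produced by Lemma \ref{symmetric reduce} as in Step 3 of the proof of Theorem \ref{minimal length elements} (with $\beta_C \in \CP_n$ the unique representative in the conjugacy class of $w_\alpha$), completes the reduction to $T_{w_{\beta_C}}$, proving \eqref{generator1}. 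The second assertion \eqref{generator0} follows from the same argument applied to $T_{\mathbf{w_C}}$: any reduced expression of any minimal length element $w_C\in C_{\min}$ reduces in the cocenter to $T_{w_{\beta_C}}$, so both families generate the same $R$-submodule of $\Tr(\HH_{n,R})$. The main obstacle is the $s=t$ case of the cocenter reduction lemma when $\ell(w')<\ell(w)$: unlike the Coxeter generators $s_i$, the element $T_0$ obeys only an order-$r$ polynomial relation, so one must invoke the BM normal form of Lemma \ref{BM2} and the braid-like identity \eqref{wbrela} to verify that the expansion of $T_t T_{\mathbf{u}}$ introduces no term of length equal to $\ell(w)$.
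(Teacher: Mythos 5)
Your broad outline matches the paper's: induct on length, conjugate down to a distinguished representative, and dispose of lower-order terms. But there is a genuine gap in how you get from $w_\alpha$ (with $\alpha\in\CC_n$) to $w_{\beta_C}$ (with $\beta_C\in\CP_n$). You assert that Lemma \ref{symmetric reduce} ``produces a length-preserving chain $w_\alpha\to w_{\beta_C}$'' to which your cocenter reduction lemma can be applied. It does not: Lemma \ref{symmetric reduce} only produces a \emph{single} conjugating element $y\in\Sym_{\{m+1,\dots,n\}}$ with $y^{-1}wy=u$ and $\ell(wy)=\ell(w)+\ell(y)$, and the proof of Theorem \ref{minimal length elements} Step 3 establishes only the equality of lengths $\ell(w_\alpha)=\ell(w_{\beta_C})$, never a chain of $\overset{s}{\to}$ steps connecting them. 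In fact the paper's Step 3 of the proof of Theorem \ref{span} handles this transition by a mechanism that your cocenter reduction lemma cannot capture: it uses the exact Hecke-algebra identities $T_{w_{\alpha,1}}T_{v_{i}}T_{u_{i+1}}=T_{u_{i+1}}T_{w_{\alpha,1}}T_{v_{i+1}}$ (which hold because $u_{i+1}, v_i$ lie in $\Sym_{\{m+1,\dots,n\}}$, where $T_u$ is well-defined and length-additivity holds) together with the invertibility of $T_{u_{i+1}}$ to conclude $T_{w_{\alpha}}\equiv T_{w_{\alpha,1}v_{i+1}}\pmod{[\HH_{n,R},\HH_{n,R}]}$ with \emph{no} lower-order remainder. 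Without either this invertibility argument or a separate proof that the $y$-conjugation of Lemma \ref{symmetric reduce} decomposes into a chain of $\overset{s}{\to}$ steps (which is a nontrivial Coxeter-group fact you neither cite nor prove), your reduction stops at $T_{w_\alpha}$ and never reaches $T_{w_{\beta_C}}$.

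A secondary concern, which you yourself flag but do not resolve, is that your ``cocenter reduction lemma'' is not a minor technical verification: for $s=t$ with $\ell(twt^{-1})<\ell(w)$, the element $u=wt^{-1}$ has $\ell(tu)<\ell(u)+1$, and expanding $T_0T_{\mathbf{u}}$ requires first converting $\mathbf{u}$ to its BM normal form via Lemma \ref{expression}, observing that the leading exponent $a_0$ of $T_0$ must then equal $r-1$, and applying the order-$r$ cyclotomic relation to $T_0^r$. This is precisely the content of the paper's Case 1 in Step 1 of the proof, carried out inside the $(W_{n-1},W_{n-1})$-double coset decomposition $w=ad_nb$ of Proposition \ref{canonicalform}; the paper does not prove, and does not need, a clean stand-alone lemma valid for arbitrary $w\overset{s}{\to}w'$, and your proposal implicitly defers essentially all of the work of Steps 1 and 2 to this unproven lemma.
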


\begin{proof} We first prove (\ref{generator1}). Set $$
\check{\HH}_{n,R}:=\text{\rm $R$-Span}\bigl\{T_{w_{\beta}}+[\HH_{n,R},\HH_{n,R}]\bigm|\beta\in\CP_n\bigr\}.
$$
We use induction on $\ell(w)$. The case $\ell(w)=0$ is clear, since $1=w_{\alpha}$ where $\alpha=(\emptyset,(1^n))\in\CP_n$. Suppose that for any $w\in W_{n}$ with $\ell(w)<m$ and any reduced expression ${\bf w}$ of $w$, we have $T_{\bf w}\in\check{\HH}_{n,R}$. Now we consider $w\in W_n$ with $\ell(w)=m$. By induction hypothesis and Lemma \ref{expression}, it suffices to show that there exists one reduced  expression ${\bf w}$ of $w$ such that $T_{\bf w}\in \check{\HH}_{n,R}$. The proof is divided into three steps as follows:

\medskip
{\it Step 1.} We fix a reduced  expression ${\bf w}$ of $w$ and define $T_w:=T_{\bf w}$. Consider the DC normal form of $w$ given in Proposition \ref{double coset} and (\ref{xadnb}), i.e., $$ w=ad_nb,
$$ where $d_n\in\D_n, a,b\in W_{n-1}$.  We first fix a reduced expression ${\bf w}(a)$ of $a$, a reduced expression ${\bf w}(d_n)$ of $d_n$, and define $$
T_a:=T_{{\bf w}(a)}, \quad T_{d_n}:=T_{{\bf w}(d_n)}.
$$
If $b\neq 1$ and ends with $s\in S\setminus\{s_{n-1}\}$, then we fix a reduced expression ${\bf w}(bs^{-1})$ of $bs^{-1}$ and define $T_{bs^{-1}}:=T_{{\bf w}(bs^{-1})}$. There are two cases:

\medskip
{\it Case 1.}  $s=t.$ If $a=t_{0,a_0}t_{1,a_1}\cdots t_{n-2,a_{n-2}}\sigma$ with $\sigma\in\Sym_{n-1}$ and $0\leq a_0<r-1$, then $\ell(ta)=\ell(a)+1$. Since
$\ell(w)=\ell(a)+\ell(d_n)+\ell(bt^{-1})+1$, it follows from induction hypothesis and Lemma \ref{expression} that $$
T_w\equiv T_{a}T_{d_n}T_{(bt^{-1})}T_{t}\equiv T_{t}T_{a}T_{d_n}T_{(bt^{-1})}\pmod{[\HH_{n,R},\HH_{n,R}]+\check{\HH}_{n,R}}.
$$
By construction, $\ell(w)=\ell(twt^{-1})=1+\ell(a)+\ell(d_n)+\ell(bt^{-1})$. It follows that $T_w\in\check{\HH}^{\Lam}_{n,R}$ if and only if for one (and hence any) reduced expression ${\bf w}(twt^{-1})$
of $twt^{-1}$, $T_{{\bf w}(twt^{-1})}\in\check{\HH}^{\Lam}_{n,R}$.

If $a_0=r-1$, then $ta=t_{1,a_1}\cdots t_{n-2,a_{n-2}}\sigma$ and hence $\ell(ta)=\ell(a)-(r-1)$. In this case, $$
T_{w}\equiv T_0^{r-1}T_{ta}T_{d_n}T_{bt^{-1}}T_{0}\equiv T_0^rT_{ta}T_{d_n}T_{bt^{-1}}\pmod{[\HH_{n,R},\HH_{n,R}]+\check{\HH}_{n,R}}.
$$
Using the cyclotomic relation $\prod_{i=1}^{r}(T_0-Q_i)=0$, we see that $$
T_0^rT_{ta}T_{d_n}T_{bt^{-1}}\in\text{$R$-Span}\Bigl\{T_{{\bf w}(u)}\Bigm|\begin{matrix}\text{$u\in W_n, \ell(u)<\ell(w)$, ${\bf w}(u)$ is a}\\
\text{reduced expression of $u$}\end{matrix}\Bigr\}.
$$
Applying induction hypothesis, we can deduce that $T_0^rT_{ta}T_{d_n}T_{bt^{-1}}\in\check{\HH}^{\Lam}_{n,R}$ and hence $T_w\in\check{\HH}^{\Lam}_{n,R}$ and we are done in this case.

{\it Case 2.}  $s=s_i$ for some $1\leq i<n-1$. In this case $\ell(ws^{-1})=\ell(ws)<\ell(w)$. If $\ell(sws)=\ell(w)$, then by Corollary \ref{cancel1} we see that $\ell(bs)=\ell(b)-1$ and $\ell(sa)=\ell(a)+1$. Note that ${\bf w}(a){\bf w}(d_n){\bf w}(bs)$ is a reduced expression of $ws$. We define $T_{ws}:=T_aT_{d_n}T_{bs}, T_{sa}=T_sT_a$. As ${\bf w}(a){\bf w}(d_n){\bf w}(bs)s$ is a reduced expression of $w$, we have $$
T_w\equiv T_{ws}T_s\equiv  T_{s}T_{ws}\equiv T_sT_{a}T_{d_n}T_{bs}\equiv T_{sa}T_{d_n}T_{bs}\pmod{[\HH_{n,R},\HH_{n,R}]+\check{\HH}_{n,R}}
$$ by induction hypothesis and Lemma \ref{expression} again.

If $\ell(sws)<\ell(w)$, then by Corollary \ref{cancel1} we can deduce that $\ell(sw)=\ell(w)-1=\ell(ws)$ and $\ell(w)=2+\ell(sws)$. In this case, we fix a reduced expression ${\bf w}(sws)$ of $sws$ then
$s{\bf w}(sws)s$ is a reduced expression of $w$. We define $T_{sws}:=T_{{\bf w}(sws)}$. Applying induction hypothesis and Lemma \ref{expression} again we can deduce that $$
T_{w}\equiv T_sT_{sws}T_{s}\equiv T_{sws}T^2_{s}\equiv T_{sws}((\xi-1)T_s+\xi) \pmod{[\HH_{n,R},\HH_{n,R}]+\check{\HH}_{n,R}}.
$$
As $\ell(sws)<\ell(w)$ and $\ell(sws)+1<\ell(w)$, it follows from induction hypothesis that $T_{sws}((\xi-1)T_s+\xi)\in\check{\HH}_{n,R}$ and hence $T_w\in\check{\HH}_{n,R}$ and we are done.

Repeating the application of the discussion in both Case 1 and Case 2, we can assume without no loss of generality that $b=1$. That says, $w=ad_n$.
Now we consider the $(W_{n-2},W_{n-2})$-double coset decomposition for $a\in W_{n-1}$ as in the proof of Proposition \ref{reduce to composition}, i.e., $$ a=a'd_{n-1}b',
$$
where $d_{n-1}\in\D_{n-1}, a',b'\in W_{n-2}$. Since $b'$ commutes with $d_{n}$, we can write $$w=a'd_{n-1}d_{n}b'.
$$
Now repeating the application of previous discussion in both Case 1 and Case 2, we can reduce to the case when $b'=1$. Next we consider the $(W_{n-3},W_{n-3})$-double coset decomposition of $a'\in W_{n-2}$ and repeating a similar argument at the beginning of this paragraph. After  finite steps, we see that there is no loss of generality to assume that $w=d_1d_2\cdots d_n$, where  $d_1\in\D_1,\cdots,d_n\in\D_n$ satisfying
$\ell(d_1)+\cdots+\ell(d_n)=m=\ell(w)$. Thus it suffices to show that $T_{d_1\cdots d_{n-1}d_n}\in\check{\HH}^{\Lam}_{n,R}$. Applying Lemma \ref{bije1}, we can find a composition $\rho=(\rho_1,\cdots,\rho_k)$ of $n$ and a sequence $\epsilon=(\epsilon_1,\cdots,\epsilon_k)\in J^k$ such that
$d_1\cdots d_n=w_{\rho,\epsilon}$. Thus we can assume without loss of generality that $w=w_{\rho,\epsilon}$.

\medskip
{\it Step 2.} Now we deal with the element $w=w_{\rho,\epsilon}$ as in the Step 2 of Theorem \ref{minimal length elements}. By Step 2 in the proof of Lemma \ref{minimal length elements}, we can choose the sequence $s_{j_1},\cdots,s_{j_b}\in\{s_1,s_2,\cdots,s_{n-1}\}$ such that in each step$$
w=w_{\rho,\epsilon}=w(1)\overset{s_{j_1}}{\longrightarrow} w(2)\overset{s_{j_2}}{\longrightarrow}\cdots\overset{s_{j_b}}{\longrightarrow}
w(b+1)=w_{\alpha},
$$
for some $\alpha=(\lam,\mu)\in\CC_n$. The main point here is, at each step since $s_{j_i}\in\{s_1,\cdots,s_{n-1}\}$, we have either $$
\ell(s_{j_i}w(i))=\ell(w(i))-1,\quad \ell(w(i)s_{j_i})=\ell(w(i))\pm 1 ;
$$
or $$
\ell(w(i)s_{j_i})=\ell(w(i))-1,\quad \ell(s_{j_i}w(i))=\ell(w(i))\pm 1 .
$$
Therefore, we can apply the same argument as in Step 1 to deduce that, in order to show $T_w=T_{w_{\rho,\epsilon}}\in\check{\HH}_{n,R}$, it suffices to show that for any $\alpha=(\lam,\mu)\in\CC_n$ with $\ell(w_\alpha)=\ell(w)$,
$T_{w_\alpha}\in\check{\HH}_{n,R}$. Thus we can assume without loss of generality that $w=w_\alpha$ for some $\alpha\in\CC_n$.

\medskip
{\it Step 3.} Finally, let $w=w_\alpha$, where $\alpha=(\lam,\mu)\in\CC_n$. As in the proof of Theorem \ref{minimal length elements},  we can decompose $w_{\alpha}=w_{\alpha,1}w_{\alpha,2}$, where $w_{\alpha,1}=w_{\lam,\epsilon(1)}\in W_m$ corresponds to the opposite partition $\lam\in\mathcal{P}_{m,-}$, $\epsilon(1)\in J^{\ell(\lam)}$ is as defined in Step 3 of the proof of Theorem \ref{minimal length elements},
and $w_{\alpha,2}=w_{\mu,(0,\cdots,0)}\in \Sym_{\{m+1,\cdots,n\}}$ corresponds to a composition $\mu$ of $n-m$. Applying Lemma \ref{symmetric reduce}, we can find $\hat{\rho}\in\mathcal{P}_{n-m}$, $w_{\alpha,2}=v_0, v_1, \cdots, v_l=w_{\hat{\rho},(0,\cdots,0)}\in \Sym_{\{m+1,\cdots,n\}}$, and $u_1,\cdots,u_s\in \Sym_{\{m+1,\cdots,n\}}$ such that \begin{enumerate}
\item[1)] $v_i=u^{-1}_iv_{i-1}u_i,\,\,\ell(v_{i-1}u_i)=\ell(v_{i-1})+\ell(u_i)$, $\forall\,1\leq i<l$; and
\item[2)] $\ell(v_i)=\ell(v_{i-1}),\,\forall\,1\leq i\leq l$.
\end{enumerate}
We want to show that
\begin{equation}\label{alphabeta}
T_{w_\alpha}\equiv T_{w_\beta}\pmod{[\HH_{n,R},\HH_{n,R}]}
\end{equation}
for some $\beta\in\CP_n$.

We first consider the case when $i=1$. The argument is somehow similar to the proof of \cite[Lemma 5.1]{HN}. We fix a reduced expression ${\bf w}(\alpha,1)$ (resp., ${\bf w}(\alpha)$) of $w_{\alpha,1}$ (resp., of $w_\alpha$) and define $T_{w_{\alpha,1}}:=T_{{\bf w}(\alpha,1)}$, $T_{w_{\alpha}}:=T_{{\bf w}(\alpha)}$. Note that for any $u\in\Sym_n$, one can use any reduced expression of $u$ to define $T_u$ and it depends only on $u$ but not on the choice of reduced expression because of the braid relations. Since $$
w_\alpha u_1=w_{\alpha,1}v_0u_1=w_{\alpha,1}u_1v_1.
$$
Note that $T_{w_{\alpha,1}}$ commutes with $T_{i}$ for any $m+1\leq i\leq n-1$ and $\ell(w_{\alpha,1})+\ell(u)=\ell(w_{\alpha,1}u)$ for any $u\in\Sym_{\{m+1,\cdots,n\}}$. We have the following {\it equalities}: $$
T_{w_{\alpha,1}}T_{v_0}T_{u_1}=T_{w_{\alpha,1}}T_{u_1}T_{v_1}=T_{u_1}T_{w_{\alpha,1}}T_{v_1} .$$
It follows that $$\begin{aligned}
T_{w_\alpha}&\equiv T_{w_{\alpha,1}}T_{v_0}\equiv T_{u_1}^{-1}T_{w_{\alpha,1}}T_{v_0}T_{u_1}\equiv T_{w_{\alpha,1}}T_{v_1}\\
&\equiv T_{w_{\alpha,1}v_1}\pmod{[\HH_{n,R},\HH_{n,R}]}.
\end{aligned}
$$
In the general case, one can show that for each $1\leq i\leq l-1$, $T_{w_{\alpha,1}v_i}\equiv T_{w_{\alpha,1}v_{i+1}}\pmod{[\HH_{n,R},\HH_{n,R}]}$. Since $w_{\alpha,1}v_l=w_{\alpha,1}w_{\hat{\rho},(0,\cdots,0)}=w_{(\lam,\hat{\rho})}\in\check{\HH}_{n,R}$, where $(\lam,\hat{\rho})\in\CP_n$. This completes the proof of (\ref{alphabeta}) and hence the first part of theorem.

Now for each conjugacy class $C$ of $W_n$ and $w\in C$, we claim that if $w\in C_{\min}$, and $\beta_C\in\P_n^c$ is the unique semi-bipartition such that $w_{\beta_C}\in C$, then
\begin{equation}\label{unitriangular1}
T_{w}\equiv T_{w_{\beta_C}}+\sum_{\substack{\beta\in\P_n^c\\ \ell(w_\beta)<\ell(w)}}a_{C,\beta}T_{w_\beta}\pmod{[\HH_{n,R},\HH_{n,R}]},
\end{equation}
where $a_{C,\beta}\in R$ for each $\beta$; while if $w\in C\setminus C_{\min}$, then \begin{equation}\label{unitriangular2}
T_{w}\equiv \sum_{\substack{\beta\in\P_n^c\\ \ell(w_\beta)<\ell(w)}}b_{C,\beta}T_{w_\beta}\pmod{[\HH_{n,R},\HH_{n,R}]},
\end{equation}
where $b_{C,\beta}\in R$ for each $\beta$. Once these two equalities are proved, the second part of the lemma follows immediately from  (\ref{unitriangular1}) and
(\ref{generator1}).

In fact, both (\ref{unitriangular1}) and (\ref{unitriangular2}) follows from an induction on $\ell(w)$, (\ref{alphabeta}), and a similar argument used in the Step 1 and Step 2 of the proof of (\ref{generator1}).
\end{proof}

Let $K$ be a field and $\xi\in K^\times, Q_1,\cdots,Q_r\in K$. Let $\O:= K[x]_{(x)}$, $\K := K(x)$, where $x$ is an indeterminate over $K$. Recall the definitions of the cyclotomic Hecke algebras $\HH_{n,K}, \HH_{n,\O}$ and $\HH_{n,\K}$ in Section 2.

\begin{lem}\label{lower} We have

1) $\dim Z(\HH_{n,\K})=\dim\Tr(\HH_{n,\K})=|\P_{r,n}|$;

2) $\dim Z(\HH_{n,K})\geq |\P_{r,n}|$.
\end{lem}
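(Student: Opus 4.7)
For part (1), my plan is to exploit the seminormal basis theory from Section 2. With the deformed parameters, $\HH_{n,\K}$ satisfies the criterion in Lemma \ref{ss} and is therefore split semisimple. By Lemma \ref{fsemi}(5), the primitive central idempotents $\{F_\blam\mid\blam\in\P_{r,n}\}$ form a $\K$-basis of $Z(\HH_{n,\K})$, so $\dim_\K Z(\HH_{n,\K})=|\P_{r,n}|$. Since $\HH_{n,\K}$ is a symmetric $\K$-algebra via the form $\tau_\K$, the isomorphism (\ref{ztr}) yields $Z(\HH_{n,\K})\cong\Tr(\HH_{n,\K})^*$ as $\K$-modules, whence $\dim_\K\Tr(\HH_{n,\K})=|\P_{r,n}|$ as well.

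For part (2), the plan is to descend from $\K$ to $K$ through the integral $\O$-form $\HH_{n,\O}$, routing the argument through the cocenter rather than the center since $\Tr$ (but not $Z$) commutes with base change. Tensoring the right-exact sequence
$$\HH_{n,\O}\otimes_\O\HH_{n,\O}\longrightarrow\HH_{n,\O}\longrightarrow\Tr(\HH_{n,\O})\longrightarrow 0$$
with any $\O$-algebra $S$ yields canonical isomorphisms $\Tr(\HH_{n,\O})\otimes_\O S\cong\Tr(\HH_{n,S})$ for $S\in\{K,\K\}$. Theorem \ref{span} applied to $R=\O$ shows that $\Tr(\HH_{n,\O})$ is generated as an $\O$-module by at most $|\P_{r,n}|$ elements. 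Since $\O=K[x]_{(x)}$ is a discrete valuation ring, the structure theorem gives a decomposition $\Tr(\HH_{n,\O})\cong\O^a\oplus T$ with $T$ a finite torsion $\O$-module and $a\leq|\P_{r,n}|$. Tensoring with $\K$ kills $T$, so $\Tr(\HH_{n,\K})\cong\K^a$, and part (1) forces $a=|\P_{r,n}|$. Tensoring instead with $K=\O/(x)$ gives $\Tr(\HH_{n,K})\cong K^a\oplus T/xT$, so $\dim_K\Tr(\HH_{n,K})\geq a=|\P_{r,n}|$. Since $\HH_{n,K}$ is a symmetric $K$-algebra (its cyclotomic parameters may be taken in $K^\times$), a second application of (\ref{ztr}) gives $\dim_K Z(\HH_{n,K})=\dim_K\Tr(\HH_{n,K})\geq|\P_{r,n}|$.

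The sole point demanding care is the torsion summand $T$ of $\Tr(\HH_{n,\O})$: its contribution $T/xT$ to $\Tr(\HH_{n,K})$ upon specialization at $x=0$ need not vanish, and this possible contribution is exactly what reflects the potential failure of semisimplicity of $\HH_{n,K}$. Consequently the inequality in part (2) is not a priori sharp. Proving that it is sharp, equivalently showing $T=0$, is a central aim of the subsequent sections and will ultimately be achieved by constructing an explicit integral $\O$-basis of the cocenter from the seminormal data.
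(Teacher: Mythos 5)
For part (1) you reproduce the paper's argument (split semisimplicity of $\HH_{n,\K}$ via Lemma~\ref{ss}, central primitive idempotents $F_\blam$); the detour through (\ref{ztr}) is harmless but unnecessary, as the paper simply reads off $\dim\Tr(\HH_{n,\K})=|\P_{r,n}|$ directly from the decomposition of $\HH_{n,\K}$ into matrix algebras with matrix units $f_{\u\v}/\gamma_\u$.

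For part (2) your route is genuinely different from the paper's. The paper views $Z(\HH_{n,\K})$ as the kernel of a matrix $A$ with entries in $\O$ (the commutator conditions expanded in the standard $\O$-basis of $\HH_{n,\O}$), notes that rank can only drop under the specialization $x\mapsto 0$, hence nullity can only grow, and concludes $\dim Z(\HH_{n,K})\geq\dim Z(\HH_{n,\K})=|\P_{r,n}|$ by pure linear algebra. That argument invokes neither Theorem~\ref{span}, nor the symmetrizing form, nor the cocenter. You instead go through $\Tr$, bound its $\O$-generation via Theorem~\ref{span}, apply the structure theorem over the DVR $\O$, and then translate back to $Z$ via (\ref{ztr}); this makes the lemma depend on Theorem~\ref{span} (which the paper's proof of this lemma does not) and requires $Q_1,\ldots,Q_r\in K^\times$ so that $\HH_{n,K}$ is symmetric (the paper's rank argument does not need this). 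Both are logically sound; the paper's is more elementary and keeps the dependency graph thinner, while your version essentially front-loads most of the proof of Theorem~\ref{mainthm2} into the lemma.

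Your closing caveat is, however, mistaken: your own argument already forces $T=0$. By Nakayama, the minimal number of $\O$-generators of $\Tr(\HH_{n,\O})\cong\O^a\oplus T$ is $a+\mu(T)$, and Theorem~\ref{span} bounds this by $|\P_{r,n}|$; once you have established $a=|\P_{r,n}|$ from part (1), you get $\mu(T)=0$, hence $T=0$, hence $\dim_K\Tr(\HH_{n,K})=|\P_{r,n}|$ exactly. So there is no residual torsion obstruction, the inequality in (2) is in fact an equality under your hypotheses, and the sharpness is not deferred to Section~5 — it is precisely the content of Theorem~\ref{mainthm2}, which you have effectively reproved here.
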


\begin{proof} Part 1) of the lemma is clear because $\HH_{n,\K}$ is isomorphic to a direct sum of some matrix algebras with $\{\mathfrak{f}_{\u\v}/\gamma_\u|\u,\v\in\Std(\blam),\blam\in\P_{r,n}\}$ being the set of matrix units. In fact, $Z(\HH_{n,\K})$ has a $\K$-basis $\{F_\bmu|\bmu\in\P_{r,n}\}$, and the following set $$
\bigl\{\mathfrak{f}_{\tlam\tlam}+[\HH_{n,\K},\HH_{n,\K}]\bigm|\blam\in\P_{r,n}\bigr\}
$$
is a $\K$-basis of $\Tr(\HH_{n,\K})$.

Since $\HH_n$ has an integral basis defined over $\O$, the calculation of $\dim Z(\HH_{n,\K})$
can be reduced to the calculation of the dimension of a solution space of a system of homogeneous linear equations with coefficient matrix $A$ defined over $\O$.
By general theory of linear algebras, the $\K$-rank of the matrix $A$ is bigger or equal to the $K$-rank of the matrix $1_K\otimes_{\O}A$, where $K$ is regarded as an $\O$-algebra by specializing $x$ to $0$. This proves that $$\dim Z(\HH_{n,K})\geq\dim Z(\HH_{n,\K})=|\P_{r,n}|.$$
Hence
$\dim\Tr(\HH_{n,K})=\dim Z(\HH_{n,\K})\geq |\P_{r,n}|$. This proves Part 2) of the lemma.
\end{proof}

Now we can give the proof of Theorem \ref{mainthm2}.
\medskip

\noindent
{\bf Proof of Theorem \ref{mainthm2}:}  Suppose $Q_1,\cdots,Q_r\in K^\times$. Then by \cite{MM}, $\HH_{n,K}$ is a symmetric algebra over $K$. By (\ref{ztr}), $Z(\HH_{n,\K})\cong(\Tr(\HH_{n,\K}))^*$. In particular,
$\dim Z(\HH_{n,\K})=\dim\Tr(\HH_{n,\K})$. For each conjugacy class $C$ of $W_n$, we arbitrarily choose an element $w_C\in C_{\min}$ and fix a reduced expression $x_1\cdots x_k$ of $w_{C}$, and define $T_{w_{C}}:=T_{x_1}\cdots T_{x_k}$. Combining Theorem \ref{span} and lemma \ref{lower}, we can deduce that $\dim Z(\HH_{n,\K})=\dim\Tr(\HH_{n,\K})=|\P_{r,n}|$ and the set \begin{equation}
\bigl\{T_{w_{C}}+[\HH_{n,K},\HH_{n,K}]\bigm|C\in{\rm Cl}(W_n)\bigr\}
\end{equation}
is in fact a $K$-basis of $\Tr(\HH_{n,K})$.

For any commutative domain $R$ with fraction field $F$, we have the following canonical map: $$
\psi: F\otimes_{R}\Tr(\HH_{n,R})\rightarrow \Tr(\HH_{n,F}) .
$$
Using Theorem \ref{span} and the fact $R\subseteq F$ it is easy to that the set (\ref{bas0}) is $R$-linearly independent and hence forms an $R$-basis of $\Tr(\HH_{n,R})$. In particular, $\Tr(\HH_{n,R})$ is a free $R$-module of rank $|\P_{r,n}|$. This proves Part 1) of Theorem \ref{mainthm2}.

Now combining (\ref{ztr}) and Part 1) of the theorem we can deduce that $Z(\HH_{n,R})$ is a free $R$-module of rank $|\P_{r,n}|$ too, and the dual $R$-basis of (\ref{bas0}) gives an $R$-basis of $Z(\HH_{n,R})$. Hence Part 2) of the theorem also follows.\qed
\medskip

\begin{rem}
	(1). The analog of the bases of the cocenter has been generalized to the degenerate cyclotomic Hecke algebra \cite[Theorem 5.6]{HuSS} and cyclotomic Sergeev algebra \cite[Theorem 1.3]{LS}.
	
		(2). By \cite[Proposition 2.1 (c)]{SVV}, for any $R$-algebra $A$ and any commutative domain $R'$ which is an $R$-algebra, we have $R'\otimes_R\Tr(A)\cong \Tr(R'\otimes_R A)$. That is, the cocenter is always stable under base change. However, this doesn't imply $\Tr(A)$ is free over $R$. Furthermore, in general, the $R$-module $[A,A]$ may not be stable unber base change. We shall prove $[\HH_{n,R},\HH_{n,R}]$ is stable under base change in Corollary \ref{purecor2}.
	
\end{rem}

\begin{cor}\label{astbasis} Let $R$ be a commutative domain and $\xi, Q_1,\cdots,Q_r\in R^\times$. For each conjugacy class $C$ of $W_n$, we arbitrarily choose an element $w_C\in C_{\min}$ and fix a reduced expression $x_1\cdots x_k$ of $w_{C}$, and define $T_{w_{C}}:=T_{x_1}\cdots T_{x_k}$. Then the set \begin{equation}
\bigl\{T_{w_{C}}^\ast+[\HH_{n,R},\HH_{n,R}]\bigm|\beta\in\CP_n\bigr\}
\end{equation}
is an $R$-basis of $\Tr(\HH_{n,R})$.
\end{cor}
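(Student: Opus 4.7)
The plan is to observe that the corollary is essentially a formal consequence of Theorem \ref{mainthm2}(1) via the anti-involution $\ast$. Since $\ast$ fixes every defining generator $T_0, T_1, \ldots, T_{n-1}$ of $\HH_{n,R}$, it is an $R$-linear anti-involution of $\HH_{n,R}$. For any $a, b \in \HH_{n,R}$ one has $(ab - ba)^\ast = b^\ast a^\ast - a^\ast b^\ast = -(a^\ast b^\ast - b^\ast a^\ast) \in [\HH_{n,R}, \HH_{n,R}]$, so $\ast$ stabilises the commutator subspace $[\HH_{n,R}, \HH_{n,R}]$ and therefore descends to a well-defined $R$-linear map $\overline{\ast} : \Tr(\HH_{n,R}) \to \Tr(\HH_{n,R})$.

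Because $(\overline{\ast})^2 = \mathrm{id}$, the induced map $\overline{\ast}$ is an $R$-module automorphism of $\Tr(\HH_{n,R})$. Applying Theorem \ref{mainthm2}(1), the set $\{T_{w_C} + [\HH_{n,R}, \HH_{n,R}] \mid C \in \mathrm{Cl}(W_n)\}$ is an $R$-basis of $\Tr(\HH_{n,R})$. Any $R$-module automorphism sends a basis to a basis, hence its image under $\overline{\ast}$, namely $\{T_{w_C}^\ast + [\HH_{n,R}, \HH_{n,R}] \mid C \in \mathrm{Cl}(W_n)\}$, is again an $R$-basis of $\Tr(\HH_{n,R})$. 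Since $\mathrm{Cl}(W_n)$ is in bijection with $\CP_n$ via $C \mapsto \beta_C$ (Theorem \ref{minimal length elements}), this is exactly the set displayed in the corollary.

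There is essentially no obstacle here: once one has the integral basis of Theorem \ref{mainthm2}(1) and the fact that $\ast$ preserves commutators, the statement is a one-line formal consequence. The only thing worth checking carefully is that $T_{w_C}^\ast$ is unambiguously defined once a reduced expression $x_1 \cdots x_k$ of $w_C$ is fixed, namely $T_{w_C}^\ast = T_{x_k} \cdots T_{x_1}$; but this is immediate from the definition of $\ast$ as the $R$-linear anti-involution fixing each $T_i$.
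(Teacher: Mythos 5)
Your proof is correct and follows exactly the same approach as the paper's one-line argument: $\ast$ is an anti-involution stabilising $[\HH_{n,R},\HH_{n,R}]$, so it induces an automorphism of $\Tr(\HH_{n,R})$ carrying the basis from Theorem \ref{mainthm2}(1) to the displayed set. You have merely spelled out the details that the paper leaves implicit.
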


\begin{proof} This is clear, because $\ast$ is an anti-isomorphism and $$
[\HH_{n,R},\HH_{n,R}]^*=[\HH_{n,R},\HH_{n,R}]. $$
\end{proof}

Let $R$ be a commutative ring and $M$ be a free $R$-module of finite rank. Recall that an $R$-submodule $N$ of $M$ is said to be $R$-pure if it satisfies that for any $y\in M$, $y\in N$ whenever $ry\in N$ for some $0\neq r\in R$. It is well-known that if $R$ is a principal ideal domain, then $M$ is a $R$-pure submodule of $N$ if and only if $M$ is an $R$-direct summand of $N$.

\begin{cor}\label{purecor2} Let $R$ be a commutative domain. Let $\xi, Q_1,\cdots,Q_r\in R^\times$. The $R$-submodule $[\HH_{n,R},\HH_{n,R}]$ is a pure $R$-submodule of $\HH_{n,R}$ of rank $r^n n!-|\P_{r,n}|$. Moreover, for any commutative domain $R'$ which is an $R$-algebra, the canonical map $$
R'\otimes_R[\HH_{n,R},\HH_{n,R}]\rightarrow [\HH_{n,R'},\HH_{n,R'}]
$$
is an $R'$-module isomorphism.
\end{cor}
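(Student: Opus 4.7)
The plan is to deduce the corollary directly from Theorem \ref{mainthm2}(1) together with the standard exact sequence
\begin{equation*}
0\longrightarrow [\HH_{n,R},\HH_{n,R}]\longrightarrow \HH_{n,R}\longrightarrow \Tr(\HH_{n,R})\longrightarrow 0.
\end{equation*}
By Lemma \ref{stdBasis} the middle term is a free $R$-module of rank $r^n n!$, and by Theorem \ref{mainthm2}(1) the right-hand term is a free $R$-module of rank $|\P_{r,n}|$. Since $\Tr(\HH_{n,R})$ is free, hence projective, the sequence splits as a sequence of $R$-modules. Therefore $[\HH_{n,R},\HH_{n,R}]$ is an $R$-direct summand of $\HH_{n,R}$, so it is itself free of rank $r^n n!-|\P_{r,n}|$; in particular, being a direct summand of a free module over a domain $R$, it is $R$-pure in $\HH_{n,R}$.

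For the base-change statement, I would first observe that the natural inclusion $[\HH_{n,R},\HH_{n,R}]\hookrightarrow \HH_{n,R}$ composed with the canonical algebra map $\HH_{n,R}\to \HH_{n,R'}=R'\otimes_R\HH_{n,R}$ factors through $[\HH_{n,R'},\HH_{n,R'}]$, so we obtain a well-defined canonical $R'$-module homomorphism
\begin{equation*}
\Phi\colon R'\otimes_R [\HH_{n,R},\HH_{n,R}]\longrightarrow [\HH_{n,R'},\HH_{n,R'}].
\end{equation*}
Since the displayed short exact sequence is split, applying $R'\otimes_R(-)$ preserves exactness, giving
\begin{equation*}
0\longrightarrow R'\otimes_R[\HH_{n,R},\HH_{n,R}]\longrightarrow \HH_{n,R'}\longrightarrow R'\otimes_R \Tr(\HH_{n,R})\longrightarrow 0.
\end{equation*}
By Theorem \ref{mainthm2}(1) the canonical map $R'\otimes_R\Tr(\HH_{n,R})\to \Tr(\HH_{n,R'})$ is an isomorphism, so comparing with the split sequence for $\HH_{n,R'}$ and using the five lemma (or simply identifying kernels of the quotient map $\HH_{n,R'}\twoheadrightarrow\Tr(\HH_{n,R'})$), we conclude that $\Phi$ is an isomorphism.

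No step here should present serious difficulty; the only small point requiring care is to check that the map $\Phi$ is really the canonical one (i.e.\ that the splittings are compatible). This is automatic once one notes that the obvious surjection from $R'$-linear combinations of commutators in the image of $\HH_{n,R}$ exhausts $[\HH_{n,R'},\HH_{n,R'}]$ (because every element of $\HH_{n,R'}$ is an $R'$-linear combination of images of elements of $\HH_{n,R}$, and $[\sum r_ia_i,\sum s_jb_j]=\sum r_is_j[a_i,b_j]$), so that $\Phi$ is surjective, while the exact sequence argument above shows it is injective. Thus the whole argument is essentially a formal consequence of Theorem \ref{mainthm2}(1) and the freeness of $\Tr(\HH_{n,R})$.
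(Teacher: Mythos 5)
Your argument is essentially identical to the paper's: both proofs observe that $\Tr(\HH_{n,R})$ is free by Theorem \ref{mainthm2}(1), hence the short exact sequence $0\to[\HH_{n,R},\HH_{n,R}]\to\HH_{n,R}\to\Tr(\HH_{n,R})\to 0$ splits, from which purity, the rank computation, and the base-change isomorphism all follow by tensoring the split sequence with $R'$ and using the isomorphism $R'\otimes_R\Tr(\HH_{n,R})\cong\Tr(\HH_{n,R'})$. One minor overstatement: from the split sequence you can conclude that $[\HH_{n,R},\HH_{n,R}]$ is a finitely generated projective (indeed stably free) $R$-module of the stated rank, hence pure, but over a general commutative domain a stably free module need not be free; the corollary only asserts purity and rank, so nothing is lost, and the paper phrases it accordingly.
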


\begin{proof} By Theorem \ref{mainthm2}, $\Tr(\HH_{n,R})=\HH_{n,R}/[\HH_{n,R},\HH_{n,R}]$ is a free $R$-module. Thus the short exact sequence \begin{equation}\label{split1}
0\rightarrow [\HH_{n,R},\HH_{n,R}]\rightarrow\HH_{n,R}\twoheadrightarrow \HH_{n,R}/[\HH_{n,R},\HH_{n,R}]\rightarrow 0 \end{equation}
must split. Hence the $R$-submodule $[\HH_{n,R},\HH_{n,R}]$ is a pure $R$-submodule of $\HH_{n,R}$ of rank $r^n n!-|\P_{r,n}|$.
The $R$-splitting of (\ref{split1}) implies that we again get a short exact sequence after tensoring with $R$: $$
0\rightarrow R'\otimes_R [\HH_{n,R},\HH_{n,R}]\rightarrow R'\otimes_R\HH_{n,R}\twoheadrightarrow R'\otimes_R \HH_{n,R}/[\HH_{n,R},\HH_{n,R}]\rightarrow 0.
$$
Now as $R'\otimes_R\HH_{n,R}\cong \HH_{n,R'}$ and by \cite[2.1(c)]{SVV}, $R'\otimes_R \HH_{n,R}/[\HH_{n,R},\HH_{n,R}]\cong \HH_{n,R'}/[\HH_{n,R'},\HH_{n,R'}]$. It follows that the canonical map
$R'\otimes_R[\HH_{n,R},\HH_{n,R}]\rightarrow [\HH_{n,R'},\HH_{n,R'}]$ is an isomorphism. This proves the corollary.
\end{proof}

\bigskip

\section{Class polynomials and integral polynomial coefficients}
In this section, we shall give the first application of our main result Theorem \ref{mainthm2}. Let $W$ be a real reflection group and ${\rm H}(W)$ be the associated Iwahori-Hecke algebra over ${\rm R}:=\Z[u_1^{\pm 1},\cdots,u_k^{\pm 1}]$ with Hecke parameters $u_1^{\pm 1},\cdots,u_k^{\pm 1}$, where $u_1,\cdots,u_k$ are indeterminates over $\Z$ and $k$ depends on $W$. Let $\{T_w|w\in W\}$ be the associated standard basis of ${\rm H}(W)$. Let ${{\rm{Cl}}(W)}$ be the set of conjugacy classes of $W$.
For each $C\in{{\rm{Cl}}(W)}$, we choose an element $w_C\in C$ such that $w_C$ is of minimal length in $C$. Geck and Pfeiffer have proved \cite{GP} (see also \cite{GP2}) that there exists a uniquely determined polynomial
$f_{w,C}$---the so-called {\it class polynomial}, which depends only on $w\in W$ and $C$ but not on the choice of the minimal length element $w_C$, such that $$
T_w\equiv \sum_{C\in{{\rm{Cl}}(W)}}f_{w,C}T_{w_C}\pmod{[{\rm H}(W),{\rm H}(W)]} .
$$
In other words, $\{T_{w_C}+[{\rm H}(W),{\rm H}(W)]|C\in{{\rm{Cl}}(W)}\}$ forms a basis of the cocenter of ${\rm H}(W)$.


Now return to the complex reflection group case. Let $W$ be a complex reflection group and $S$ be the set of distinguished pseudo-reflections of $W$. For each $s\in S$, let $e_s$ be the order of $s$ in $W$ and choose $e_s$ indeterminates $u_{s,1},\cdots,u_{s,e_s}$ such that $u_{s,j}=u_{t,j}$ if $s,t$ are conjugate in $W$. Malle has introduced in \cite{Mal} some indeterminates $v_{s,j}, s\in S, 1\leq j\leq e_s$, which are some $N_W$-th roots of scalar multiple of $u_{s,j}$, where $N_W\in\N$, see \cite[(2.7)]{CP}.
We set $$\begin{aligned}
& {\rm R}:=\Z[u_{s,j}^{\pm 1}|s\in S, 1\leq j\leq e_s],\quad {\rm F}:=\C(v_{s,j}|s\in S, 1\leq j\leq e_s).\\
& {\rm R}_{\mathbf{v}}:=\Z[v_{s,j}^{\pm 1}|s\in S, 1\leq j\leq e_s],\quad {\rm F}_{\mathbf{v}}:=\C[v_{s,j}^{\pm 1}|s\in S, 1\leq j\leq e_s].
\end{aligned}
$$
Let $\HH(W)$ be the associated generic cyclotomic Hecke algebra over ${\rm R}$ with parameters $\{u_{s,j}^{\pm 1}|s\in S, 1\leq j\leq e_s\}$. Malle \cite{Mal} showed that the $F$-algebra $F\otimes_{\rm R}\HH(W)$ is split semisimple. Specializing $v_{s,j}\mapsto 1$ for all $s\in S$ and $1\leq j\leq e_s$, then we get
$u_{s,j}\mapsto e^{2\pi\sqrt{-1}j/e_s}$ for all $s\in S$ and $1\leq j\leq e_s$, and a $\C$-algebra isomorphism \begin{equation}\label{specializ1}
\C\otimes_{{\rm F}_{\mathbf{v}}}{\rm F}_{\mathbf{v}}\otimes_{\rm R}\HH(W)\cong \C\otimes_{\rm R}\HH(W)\cong\C[W].
\end{equation} There is an ${\rm F}_{\mathbf{v}}$-algebra homomorphism \begin{equation}\label{specializ2}
\phi_{{\rm F}_{\mathbf{v}}}: {\rm F}_{\mathbf{v}}\otimes_{\rm R}\HH(W)\rightarrow{\rm F}_{\mathbf{v}}[W],
\end{equation}
such that $\id_{\C}\otimes_{{\rm F}_{\mathbf{v}}}\phi_{{\rm F}_{\mathbf{v}}}$ gives the isomorphism (\ref{specializ1}), and
$\id_{F}\otimes_{{\rm F}_{\mathbf{v}}}\phi_{{\rm F}_{\mathbf{v}}}$ defines an $F$-algebra isomorphism \begin{equation}\label{Tits}
{\rm F}\otimes_{\rm R}\HH(W)\cong {\rm F}[W],
\end{equation}
which is called Tits isomorphism.

We fix an ${\rm F}_{\mathbf{v}}$-basis $\mathcal{B}:=\{\fb_w|w\in W\}$ of $\HH(W)$ such that\footnote{This condition is implicit in the statement ``the specialization $v_{s,j}\mapsto 1$ induces a bijection ${\rm{Irr}}({\rm{H}}\otimes_{{\rm{R}}}F)\rightarrow{\rm{Irr}}(W)$'' in \cite[\S1]{CP} and needed in \cite[Theorem 3.2, 3.3]{CP}.}
$(\id_{\C}\otimes_{{\rm F}_{\mathbf{v}}}\phi_{{\rm F}_{\mathbf{v}}})(1_\C\otimes \fb_w)=w$ for each $w\in W$. For each $C\in{{\rm{Cl}}(W)}$, we fix a representative $w_C\in C$. Then $$\bigl\{\fb_{w_C}+[F\otimes_{\rm R}{\HH}(W),F\otimes_{\rm R}{\HH}(W)]\bigm|C\in{{\rm{Cl}}(W)}\bigr\}$$ forms a basis of the cocenter $\Tr({\rm F}\otimes_{\rm R}\HH(W))$ of ${\rm F}\otimes_{\rm R}{\HH}(W)$. Chavli and Pfeiffer (\cite{CP}) defined $f_{w,C}\in{\rm F}$ such that for any irreducible character $\chi$ of ${\rm F}\otimes_{\rm R}{\HH}(W)$, $$
\chi(\fb_w)=\sum_{C\in{{\rm{Cl}}(W)}}f_{w,C}\chi(\fb_{w_C}) .
$$
Equivalently, \begin{equation}\label{fwc}
\fb_w\equiv\sum_{C\in{{\rm{Cl}}(W)}}f_{w,C}\fb_{w_C}\pmod{[{\rm F}\otimes_{\rm R}{\HH}(W),{\rm F}\otimes_{\rm R}{\HH}(W)]} .
\end{equation}

Let $\{\fb_w^{\vee}|w\in W\}$ be the dual basis of $\mathcal{B}$ with respect to the symmetrizing form $\tau$. Chavli and Pfeiffer proved in
\cite[Theorem 3.2]{CP} that the following elements \begin{equation}\label{ycbasis1}
{\mathbf y}_C:=\sum_{w\in W}f_{w,C}\fb_{w}^\vee,\quad C\in{{\rm{Cl}}(W)}
\end{equation}
form an $F$-basis of the center $Z({\rm F}\otimes_{\rm R}\HH(W))$.

Chavli and Pfeiffer (\cite{CP}) also obtained a dual version of the above result.
Using the specialization map (\ref{specializ1}), one can see that $\{\fb_{w_C}^\vee+[{\rm F}\otimes_{\rm R}\HH(W),{\rm F}\otimes_{\rm R}\HH(W)]|C\in{{\rm{Cl}}(W)}\}$ forms an $F$-basis of the cocenter
$\Tr\bigl({\rm F}\otimes_{\rm R}\HH(W)\bigr)$. For each $w\in W$, we have $$
\fb_w^\vee\equiv\sum_{C\in{{\rm{Cl}}(W)}}g_{w,C}\fb_{w_C}^\vee\pmod{[{\rm F}\otimes_{\rm R}{\HH}(W),{\rm F}\otimes_{\rm R}{\HH}(W)]} ,
$$
where $g_{w,C}\in F$ for each pair $(w,C)$. Chavli and Pfeiffer proved in \cite[Theorem 3.3]{CP} that the following elements \begin{equation}\label{ycbasis2}
z_C:=\sum_{w\in W}g_{w,C}\fb_{w},\quad C\in{{\rm{Cl}}(W)}
\end{equation}
form an $F$-basis of the center $Z({\rm F}\otimes_{\rm R}\HH(W))$. They proposed the following conjecture.

\begin{conj}\text{\rm (\cite[Conjecture 3.7]{CP})}\label{cpconj} There exists a choice of an ${\rm R}$-basis $\{\fb_w|w\in W\}$ of the Hecke algebra $\HH(W)$, and a choice of conjugacy class representatives $\{w_C|C\in{{\rm{Cl}}(W)}\}$ such that $g_{w,C}\in R$ for each pair $(w,C)$, and hence $\{z_C|C\in{{\rm{Cl}}(W)}\}$ is an $R$-basis of $Z(\HH(W))$.
\end{conj}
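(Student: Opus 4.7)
The plan is to exhibit an explicit $R$-basis $\{\fb_w\}_{w\in W_n}$ of $\HH_{n,R}$ and an explicit choice of conjugacy class representatives $\{w_C\}_{C\in{\rm{Cl}}(W_n)}$ for which the integrality $g_{w,C}\in R$ becomes a direct consequence of Theorem \ref{mainthm2}. Let $R$ denote the generic ring for $W_n=G(r,1,n)$ and $\HH_{n,R}$ the associated cyclotomic Hecke algebra, which is a symmetric $R$-algebra with standard symmetrizing form $\tau_R$ by \cite{MM}. Fixing a reduced expression for each $w\in W_n$ produces an $R$-basis $\{T_w\}$ of $\HH_{n,R}$ (Lemma \ref{expression}), whose dual basis $\{T_w^\vee\}$ under $\tau_R$ automatically lies in $\HH_{n,R}$.

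Next I would define $\fb_w:=T_{w^{-1}}^\vee$; this is an $R$-basis of $\HH_{n,R}$, and double duality of the pairing $\tau_R$ yields $\fb_w^\vee=T_{w^{-1}}$. To verify the required specialization $\fb_w\mapsto w$: the standard basis $\{\mL^{\mathbf{c}}T_w\}$ of $\HH_{n,R}$ specializes, under the map sending the Hecke parameters to the roots of unity that turn $\HH_{n,R}$ into $\C[W_n]$, to the normal-form enumeration of $W_n$ as monomial matrices; consequently $\tau_R$ specializes to the standard symmetrizing form $\sum_{v\in W_n} a_v v\mapsto a_1$ on $\C[W_n]$, whose dual basis of $\{v^{-1}\}_v$ is $\{v\}_v$, so $T_{w^{-1}}^\vee$ specializes to $(w^{-1})^{-1}=w$. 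For the representatives, use Theorem \ref{mainthm1} to choose $w_C\in C_{\min}$ for each conjugacy class $C$; since length is invariant under inversion (from the BM normal form of Lemma \ref{BM2}) and $C\mapsto C^{-1}$ is a bijection on ${\rm{Cl}}(W_n)$, the collection $\{w_C^{-1}\}_C$ is a complete system of minimal-length conjugacy class representatives.

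By Theorem \ref{mainthm2} applied with these minimal-length representatives $\{w_C^{-1}\}$, the set $\{T_{w_C^{-1}}+[\HH_{n,R},\HH_{n,R}]\mid C\in{\rm{Cl}}(W_n)\}$ is an $R$-basis of the cocenter $\Tr(\HH_{n,R})$. Since $\fb_{w_C}^\vee=T_{w_C^{-1}}$, each element $\fb_w^\vee=T_{w^{-1}}\in\HH_{n,R}$ therefore has a unique expansion
\[
\fb_w^\vee\equiv\sum_{C\in{\rm{Cl}}(W_n)} g_{w,C}\,\fb_{w_C}^\vee\pmod{[\HH_{n,R},\HH_{n,R}]}
\]
with all $g_{w,C}\in R$, which is the main assertion of Conjecture \ref{cpconj}. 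The accompanying claim that $\{z_C\}$ forms an $R$-basis of $Z(\HH_{n,R})$ then follows from the isomorphism $Z(\HH_{n,R})\cong\Tr(\HH_{n,R})^*$ of (\ref{ztr}) (applicable since $\Tr(\HH_{n,R})$ is $R$-free by Theorem \ref{mainthm2}), under which $\{z_C\}$ is the $R$-basis dual to $\{\bar{\fb}_{w_C}^\vee\}$. The key obstacle---and the crucial design choice---is arranging that each $\fb_w^\vee$ equals $T_v$ for some $v\in W_n$, so that Theorem \ref{mainthm2} applies directly; the dual-basis construction $\fb_w:=T_{w^{-1}}^\vee$ achieves exactly this while retaining the specialization $\fb_w\mapsto w$.
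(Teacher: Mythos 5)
Your proof is correct and follows essentially the same route as the paper's Proposition \ref{apply2}: both define $\fb_w := T_{w^{-1}}^\vee$, observe $\fb_w^\vee = T_{w^{-1}}$, choose the conjugacy class representatives so that $w_C^{-1}$ lands in $C_{\min}$ of the inverted class, and then apply Theorem \ref{mainthm2} to get $g_{w,C}\in{\rm R}$ and the duality statement via $Z(\HH_{n,{\rm R}})\cong(\Tr(\HH_{n,{\rm R}}))^*$. Your spelling-out of why the specialization sends $T_{w^{-1}}^\vee$ to $w$ (via the specialization of $\tau_{\rm R}$ to the group-algebra trace, whose dual of $\{v\}_v$ is $\{v^{-1}\}_v$) is a slightly more explicit version of a step the paper only asserts, but the underlying argument is identical.
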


In the rest of this section, we shall use our main result Theorem \ref{mainthm2} to verify Conjecture \ref{cpconj} for the cyclotomic Hecke algebra $\HH_{n,{\rm R}}=\HH(W)$ associated to the complex reflection group $W=W_n$ of type $G(r,1,n)$.

By \cite{MM}, $\HH_{n,{\rm R}}$ is a symmetric algebra over ${\rm R}$ with symmetrizing form $\tau_{\rm R}$. For each $u\in W_n$, we fix a reduced expression ${\bf u}$ and use this to define $T_u$.
Then Lemma \ref{expression} implies that $\{T_w|w\in W_n\}$ forms an ${\rm R}$-basis of $\HH_{n,{\rm R}}$. For this prefixed basis, let $$\{T^\vee_w|w\in W_n\}
$$ be its dual basis with respect to the symmetrizing form $\tau_{\rm R}$. For each $C\in{\rm{Cl}}(W_n)$, we arbitrarily choose an element $w_C\in C_{\min}$.
By Theorem \ref{mainthm2}(1), $\{T_{w_C}+[\HH_{n,{\rm R}},\HH_{n,{\rm R}}]|C\in{\rm{Cl}}(W_n)\}$ forms an ${\rm R}$-basis of the cocenter $\Tr(\HH_{n,{\rm R}})$.
Thus, for any $w\in W_n$,
\begin{equation}\label{fwc1}
T_w\equiv\sum_{\beta\in \CP_n}f_{w,C} T_{w_C}\pmod{[\HH_{n,{\rm R}},\HH_{n,{\rm R}}]},
\end{equation}
where $f_{w,C}\in{\rm R}$ for each $C\in{\rm{Cl}}(W_n)$. This proves the following result.

\begin{prop}\label{fwc2} For each $w\in W_n$, we define $b_w:=T_w$. For each $C\in{{\rm{Cl}}(W)}$, we arbitrarily choose an element $w_C\in C_{\min}$. Then the coefficient $f_{w,C}$ in (\ref{fwc}) lies in ${\rm R}$. Moreover, the set $$
\biggl\{{\mathbf y}_C=\sum_{w\in W_n}f_{w,C}T_w^\vee\biggm|C\in{\rm{Cl}}(W_n)\biggr\}
$$ forms an ${\rm R}$-basis of the center $Z(\HH_{n,{\rm R}})$.
\end{prop}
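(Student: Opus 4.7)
The plan is to combine Theorem \ref{mainthm2}(1) with the standard duality between the center and the cocenter of a symmetric algebra.

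For the first assertion, note that $\mathrm{R} = \mathbb{Z}[u_{s,j}^{\pm 1}]$ is a commutative domain and each cyclotomic parameter lies in $\mathrm{R}^\times$, so Theorem \ref{mainthm2}(1) applies verbatim to $\HH_{n,\mathrm{R}}$. Consequently the set $\{T_{w_C}+[\HH_{n,\mathrm{R}},\HH_{n,\mathrm{R}}]\mid C\in\mathrm{Cl}(W_n)\}$ is an $\mathrm{R}$-basis of $\Tr(\HH_{n,\mathrm{R}})$. Expanding $T_w+[\HH_{n,\mathrm{R}},\HH_{n,\mathrm{R}}]$ in this basis produces uniquely determined scalars $f_{w,C}\in\mathrm{R}$ satisfying \eqref{fwc1}; these are precisely the coefficients appearing in \eqref{fwc}, which proves the first claim.

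For the second assertion I would pass through the symmetrizing form $\tau_{\mathrm{R}}$. By \eqref{ztr} the map $z\mapsto\bigl(a+[\HH,\HH]\mapsto\tau_{\mathrm{R}}(za)\bigr)$ gives an $\mathrm{R}$-module isomorphism $Z(\HH_{n,\mathrm{R}})\cong(\Tr(\HH_{n,\mathrm{R}}))^*$. Let $\phi_C\in(\Tr(\HH_{n,\mathrm{R}}))^*$ denote the functional dual to $T_{w_C}+[\HH,\HH]$, so that $\phi_C(T_{w_{C'}}+[\HH,\HH])=\delta_{C,C'}$; together with the expansion \eqref{fwc1} this forces $\phi_C(T_w+[\HH,\HH])=f_{w,C}$ for every $w\in W_n$. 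The corresponding central element $z_C\in Z(\HH_{n,\mathrm{R}})$ must then satisfy $\tau_{\mathrm{R}}(z_CT_w)=f_{w,C}$ for every $w$. Writing $z_C=\sum_{v\in W_n}c_vT_v^{\vee}$ and using the defining property $\tau_{\mathrm{R}}(T_v^{\vee}T_w)=\delta_{v,w}$ of the dual basis, I obtain $c_v=f_{v,C}$, i.e.\ $z_C=\mathbf{y}_C$. Hence $\{\mathbf{y}_C\mid C\in\mathrm{Cl}(W_n)\}$ is the image under the inverse of this isomorphism of an $\mathrm{R}$-basis of $(\Tr(\HH_{n,\mathrm{R}}))^*$, and therefore is an $\mathrm{R}$-basis of $Z(\HH_{n,\mathrm{R}})$.

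I do not expect any serious obstacle here: all the substantive work is packaged into Theorem \ref{mainthm2} (freeness and base-change of both center and cocenter) and the symmetric algebra isomorphism \eqref{ztr}, and what remains is the routine identification of the central element dual to $T_{w_C}+[\HH,\HH]$ under the symmetrizing form. The mild bookkeeping point worth stressing in the write-up is that the freeness of $\Tr(\HH_{n,\mathrm{R}})$ forces the short exact sequence $0\to[\HH,\HH]\to\HH\to\Tr(\HH)\to0$ to split over $\mathrm{R}$, which is what legitimately allows us to speak of an $\mathrm{R}$-dual basis without inverting elements of $\mathrm{R}$.
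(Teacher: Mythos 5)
Your proof is correct and follows essentially the same route as the paper: apply Theorem \ref{mainthm2}(1) to $\HH_{n,\mathrm{R}}$ to get the $\mathrm{R}$-basis $\{T_{w_C}+[\HH,\HH]\}$ of the cocenter (which gives $f_{w,C}\in\mathrm{R}$), then use the isomorphism $Z(\HH_{n,\mathrm{R}})\cong(\Tr(\HH_{n,\mathrm{R}}))^*$ from \eqref{ztr} together with the observation $f_{w_{C'},C}=\delta_{C,C'}$ to identify $\{\mathbf{y}_C\}$ as the dual basis. The paper's own proof is slightly terser but makes exactly these same moves.
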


\begin{proof} The first part of the proposition follows from Theorem \ref{mainthm2}. For the second part of the proposition, by Theorem \ref{mainthm2}(1) and (\ref{fwc1}), we see that $f_{w_{C'},C}=\delta_{C,C'}$ for any $C,C'\in{\rm{Cl}}(W_n)$. It follows that $\{y_C|C\in{\rm{Cl}}(W_n)\}$ is the dual basis of the ${\rm R}$-basis
$\bigl\{T_{w_C}+[\HH_{n,{\rm R}},\HH_{n,{\rm R}}]\bigm|C\in{\rm{Cl}}(W_n)\bigr\}$ of the cocenter $\Tr(\HH_{n,{\rm R}})$ with respect to the isomorphism $Z(\HH_{n,{\rm R}})\cong\bigl(\Tr(\HH_{n,{\rm R}})\bigr)^*$. In particular, the set $\{{\mathbf y}_C|C\in{\rm{Cl}}(W_n)\}$ forms an ${\rm R}$-basis of the center $Z(\HH_{n,{\rm R}})$.
\end{proof}

\begin{rem} The above polynomial $f_{w,C}\in R$ is a natural generalization of Geck and Pfeiffer's class polynomial $f_{w,C}$. However, in contrast to the real reflection group case, for any two elements $w_1, w_2\in C_{\min}$, it may happen that $T_{w_1}\not\equiv T_{w_2}\pmod{[\HH_{n,{\rm R}},\HH_{n,{\rm R}}]}$ when $r>2$, as is shown in \cite[Example 4.4]{HuSS}. That says, $f_{w,C}$ may depends on the choice of elements in $C_{\min}$.
\end{rem}

Our next result verifies Conjecture \ref{cpconj} for the complex reflection group $W_n$ of type $G(r,1,n)$, which gives a second application of our main results.

\begin{prop}\label{apply2} Let $W=W_n$ be the complex reflection group of type $G(r,1,n)$. Then Conjecture \ref{cpconj} holds in this case.
\end{prop}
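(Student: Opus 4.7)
The plan is to reduce Conjecture \ref{cpconj} for $W=W_n$ to Theorem \ref{mainthm2}(1) by a judicious choice of basis that makes the duals $\fb_{w_C}^\vee$ coincide with standard Hecke elements attached to a secondary set of minimal length conjugacy class representatives.  Fix a reduced expression for each $w\in W_n$ and for each $w^{-1}\in W_n$; set $\fb_w:=T_{w^{-1}}^\vee$, the dual of $T_{w^{-1}}$ under $\tau_{\rm R}$ in the basis $\{T_u:u\in W_n\}$, and take $w_C\in C_{\min}$ as in Theorem \ref{mainthm2}.  The Tits compatibility $(\id_\C\otimes\phi_{{\rm F}_{\mathbf v}})(1_\C\otimes\fb_w)=w$ follows from two observations: (i) under the Tits specialization the Ariki-Koike basis $\{\mL_1^{c_1}\cdots\mL_n^{c_n}T_w\}$ specializes bijectively to the standard group basis $\{t_1^{c_1}\cdots t_n^{c_n}w:0\le c_i<r,w\in\Sym_n\}$ of $\C[W_n]$, and Definition \ref{ssf} makes $\tau_{\rm R}$ specialize to the standard trace $g\mapsto\delta_{g,1}$; (ii) dualization of a basis in a symmetric algebra is preserved under base change, so $T_u^\vee$ specializes to the dual of $u$ under the standard trace, namely $u^{-1}$.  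Hence $\fb_w=T_{w^{-1}}^\vee\mapsto(w^{-1})^{-1}=w$.

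By the double-dual identity, $\fb_{w_C}^\vee=T_{w_C^{-1}}$.  Since inversion preserves length and induces a bijection of ${\rm{Cl}}(W_n)$, the collection $\{w_C^{-1}:C\in{\rm{Cl}}(W_n)\}$ is again a choice of one minimal length representative per conjugacy class.  Applying Theorem \ref{mainthm2}(1) to this alternative choice of representatives (with fixed reduced expressions of each $w_C^{-1}$) yields that $\{T_{w_C^{-1}}+[\HH_{n,{\rm R}},\HH_{n,{\rm R}}]:C\in{\rm{Cl}}(W_n)\}$ is an ${\rm R}$-basis of $\Tr(\HH_{n,{\rm R}})$.  Hence the expansion
\[
\fb_w^\vee=T_{w^{-1}}\equiv\sum_{C\in{\rm{Cl}}(W_n)}g_{w,C}\,T_{w_C^{-1}}=\sum_{C}g_{w,C}\,\fb_{w_C}^\vee\pmod{[\HH_{n,{\rm R}},\HH_{n,{\rm R}}]}
\]
uniquely determines $g_{w,C}\in{\rm R}$.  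Setting $w=w_{C'}$ further yields $g_{w_{C'},C}=\delta_{C,C'}$ by uniqueness of the basis expansion.

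Finally, under the symmetric algebra isomorphism $Z(\HH_{n,{\rm R}})\cong\Hom_{\rm R}(\Tr(\HH_{n,{\rm R}}),{\rm R})$, Theorem \ref{mainthm2}(2) provides a dual ${\rm R}$-basis $\{y_C\}$ of $Z(\HH_{n,{\rm R}})$ for the cocenter basis $\{T_{w_C^{-1}}+[\HH,\HH]\}$, characterized by $\tau_{\rm R}(y_C T_{w_{C'}^{-1}})=\delta_{C,C'}$.  A direct computation using $z_C=\sum_w g_{w,C}T_{w^{-1}}^\vee$ and the dual basis property $\tau_{\rm R}(T_u T_v^\vee)=\delta_{u,v}$ gives $\tau_{\rm R}(z_C T_{w_{C'}^{-1}})=g_{w_{C'},C}=\delta_{C,C'}$, so $z_C=y_C$ and hence $\{z_C\}$ is an ${\rm R}$-basis of $Z(\HH_{n,{\rm R}})$, completing the verification of Conjecture \ref{cpconj}.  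The main subtlety---more conceptual than technical---is recognizing that the apparently natural choice $\fb_w:=T_w$ does not reduce directly to Theorem \ref{mainthm2}, since it would require independently proving that $\{T_{w_C}^\vee+[\HH,\HH]\}$ is an ${\rm R}$-basis of the cocenter, for which no obvious argument is at hand; by contrast, taking $\fb_w:=T_{w^{-1}}^\vee$ converts the Chavli-Pfeiffer input into precisely the output of Theorem \ref{mainthm2} applied to the inverted representatives $\{w_C^{-1}\}$.
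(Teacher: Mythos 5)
Your proof is correct and follows essentially the same route as the paper: you take $\fb_w := (T_{w^{-1}})^\vee$, observe that the Tits specialization sends $\fb_w\mapsto w$ because $\tau_{\rm R}$ specializes to the standard trace on the group algebra, deduce $\fb_{w_C}^\vee = T_{w_C^{-1}}$, apply Theorem~\ref{mainthm2}(1) to the inverted minimal-length representatives, and then match $\{z_C\}$ with the dual basis of the cocenter basis under the symmetrizing-form isomorphism. The only cosmetic difference is bookkeeping: the paper indexes the inverted representatives by defining $\hat\beta_C\in\P_n^c$ with $w_{\hat\beta_C}^{-1}\in C$ and sets $w_C := w_{\hat\beta_C}^{-1}$, whereas you argue directly that inversion preserves length and permutes ${\rm Cl}(W_n)$, so that $\{w_C^{-1}\}$ is again a valid system of minimal-length representatives; both are fine, since $\min_{u\in C^{-1}}\ell(u)=\min_{u\in C}\ell(u)$. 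Your closing observation---that the na\"ive choice $\fb_w=T_w$ would require independently showing $\{T_{w_C}^\vee+[\HH,\HH]\}$ spans the cocenter, which is not directly what Theorem~\ref{mainthm2} gives---is a genuinely useful remark that the paper does not state explicitly, and it nicely explains \emph{why} the dualization trick is needed.
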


\begin{proof} For each $u\in W_n$, we fix a reduced expression ${\bf u}=x_1\cdots x_k$, where $x_i\in\{t,s_1,\cdots,s_{n-1}\}$ for each $i$, and use this to define $T_u:=T_{x_1}\cdots T_{x_k}$. Then Lemma \ref{expression} implies that $\{T_w|w\in W_n\}$ forms an ${\rm R}$-basis of $\HH_{n,{\rm R}}$.

Let $$\mathcal{B}:=\bigl\{\fb_w:=(T_{w^{-1}})^{\vee}\bigm|w\in W_n\bigr\}
$$ be the dual basis of $\{T_{w^{-1}}|w\in W_n\}$ with respect to the symmetrizing form $\tau_{\rm R}$. Note that the standard symmetrizing form $\tau_{\rm R}$ specializes to the standard symmetrizing form on $F[W]$ upon specializing $\xi\mapsto 1$ and $Q_j\mapsto e^{2\pi\sqrt{-1}j/r}$ for $1\leq j\leq r$. It follows that the basis $\mathcal{B}$ satisfies that $$
(\id_{\C}\otimes_{{\rm F}_{\mathbf{v}}}\phi_{{\rm F}_{\mathbf{v}}})(1_\C\otimes (T_{w^{-1}})^{\vee})=w,\quad \forall\,w\in W_n ,
$$
and $$
\bigl\{(T_{w_\beta^{-1}})^{\vee}+\bigl[\HH_{n,F},\HH_{n,F}\bigr]\bigm| \beta\in\P_n^c\bigr\}
$$
forms an $F$-basis of the cocenter $\Tr(\HH_{n,F})$.

Now the dual basis $\mathcal{B}^\vee$ of $\mathcal{B}$ is $\{\fb_w^{\vee}:=T_{w^{-1}}|w\in W_n\}$, which is an ${\rm R}$-basis of $\HH_{n,{\rm R}}$. It is clear that the basis $\mathcal{B}^\vee$ satisfies that $$
(\id_{\C}\otimes_{{\rm F}_{\mathbf{v}}}\phi_{{\rm F}_{\mathbf{v}}})(1_\C\otimes \fb_w^{\vee})=w^{-1},\quad \forall\,w\in W_n .
$$
Note that $\{w_\beta^{-1}|\beta\in\P_n^c\}$ is a complete set of representatives of conjugacy classes in $W_n$. It follows that for each $C\in{\rm{Cl}}(W_n)$, there is a unique $\hat{\beta}_C\in\P_n^c$ such that $w_{\hat{\beta}_C}^{-1}\in C$. We define $w_C:=w_{\hat{\beta}_C}^{-1}$. Then $$
\fb_{w_C}^\vee=T_{w_C^{-1}}=T_{w_{\hat{\beta}_C}} .
$$
Applying Theorem \ref{mainthm2}, $$
\bigl\{T_{w_{\hat{\beta}_C}}+[\HH_{n,{\rm R}},\HH_{n,{\rm R}}]\bigm|C\in{\rm{Cl}}(W_n)\bigr\}
$$
forms an ${\rm R}$-basis of the cocenter $\Tr(\HH_{n,{\rm R}})$. Therefore, for any $w\in W_n$, \begin{equation}\label{gwc1}
\fb_w^\vee\equiv\sum_{C\in{\rm{Cl}}(W_n)}g_{w,C}\fb_{w_C}^\vee\pmod{[\HH_{n,{\rm R}},\HH_{n,{\rm R}}]},
\end{equation}
where $g_{w,C}\in{\rm R}$ for each $C\in{\rm{Cl}}(W_n)$.

Finally, by construction, $z_C\in Z(\HH_{n,{\rm R}})$. We note that by Theorem \ref{mainthm2}(1) and (\ref{gwc1}), $g_{w_{C'},C}=\delta_{C,C'}$ for any $C,C'\in{\rm{Cl}}(W_n)$. It follows that $$
\biggl\{z_C=\sum_{w\in W}g_{w,C}\fb_{w}\biggm|C\in{\rm{Cl}}(W_n)\biggr\}
$$ is the dual basis of the ${\rm R}$-basis
$\bigl\{\fb_{w_C}^\vee+[\HH_{n,{\rm R}},\HH_{n,{\rm R}}]\bigm|C\in{\rm{Cl}}(W_n)\bigr\}$ of the cocenter $\Tr(\HH_{n,{\rm R}})$ with respect to the isomorphism $Z(\HH_{n,{\rm R}})\cong\bigl(\Tr(\HH_{n,{\rm R}})\bigr)^*$. In particular, the set $\{z_C|C\in{\rm{Cl}}(W_n)\}$ forms an ${\rm R}$-basis of the center $Z(\HH_{n,{\rm R}})$. This proves that Conjecture \ref{cpconj} holds in this case.
\end{proof}
\bigskip

\section{Cocenters and centers of the cyclotomic KLR algebras of affine type $A$}
In this section, we shall give the second application of our main result Theorem \ref{mainthm2}. We shall show that both the cocenters and the centers of certain cyclotomic KLR algebras of affine type $A$ are independent of the characteristic of the ground field $K$.

The KLR algebras and their cyclotomic quotients are some infinite family of $\Z$-graded algebras introduced by Khovanov and Lauda (\cite{KL1,KL2}), and by Rouquier (\cite{Rou2}). They are associated with a symmetrizable Cartan matrix, a family of polynomials $\{Q_{i,j}(u,v)\}$ and $\alpha\in Q_n^+$. Khovanov, Lauda  and Rouquier used these algebras to provide a categorification of the negative part of the quantum groups associated to the same Cartan datum and their integrable highest weight modules. These algebras play an important role in the categorical representation theory of $2$-Kac-Moody algebras. In this section, we shall only consider the cyclotomic KLR algebras associated to the Cartan datum of type $A_{e-1}^{(1)}$ (if $e>1$) or $A_{\infty}$ (if $e=0$), and the following choices of the polynomials $\{Q_{i,j}(u,v)\}$: \begin{equation}\label{qq}
Q_{i,j}(u,v):=\begin{cases} 0, &\text{if $i=j$;}\\
v-u, &\text{if $i\rightarrow j$;}\\
u-v, &\text{if $i\leftarrow j$;}\\
(u-v)^2, &\text{if $i\rightleftarrows j$;}\\
1, &\text{otherwise.}
\end{cases}\end{equation}
These cyclotomic KLR algebras of type $A$ will be closely related to the cyclotomic Hecke algebra.

Let $\Gamma_e$ be the quiver with vertex set $I:=\Z/e\Z$ and edges $i\rightarrow i+1$, for all $i\in I$. If $e=2$, then we shall write $i\rightleftarrows i+1$. Following \cite[Chapter 1]{Kac}, attach to $\Gamma_e$ the standard Lie theoretic data of a Cartan matrix $A=(a_{ij})_{i,j\in I}$, a set $\Pi=\{\alpha_i|i\in I\}$ of simple roots, a weight lattice $P$ which is a free abelian group with basis $\Pi$, a set $\Pi^\vee=\{\alpha_i^\vee|i\in I\}$ of simple co-roots, and the dual weight lattice $P^\vee:=\Hom_{\Z}(P,\Z)$ with basis $\Pi^\vee$, such that $\<\alpha_i^\vee,\alpha_j\>=a_{ij}$, $\forall\,i,j\in I$. We call $Q^+:=\sum_{i\in I}\mathbb{N}\alpha_i$ the positive root lattice. Let $\mathfrak{h}:=\mathbb{Q}\otimes_{\Z}P^\vee$. There is a symmetric bilinear form $(-,-)$ on $\mathfrak{h}^*$ such that $$
(\alpha_i,\alpha_j)=a_{ij},\,\, \<\alpha_i^\vee,\lam\>=2(\alpha_i,\lam)/(\alpha_i,\alpha_i)=(\alpha_i,\lam),\,\,\forall\,\lam\in\mathfrak{h}^*, i\in I .
$$
If $e>1$ then this Cartan datum is of type $A_{e-1}^{(1)}$; if $e=0$ then this Cartan datum is of type $A_{\infty}$.

Let $\{\Lam_j\in P|j\in I\}$ be the set of fundamental weights. That means, $\<\alpha_j^{\vee},\Lam_i\>=\delta_{ij},\forall\,i,j\in I$. We call $P^+:=\oplus_{j\in I}\mathbb{N}\Lam_j$ the set of dominant weights. For any  $\kappa_1,\cdots,\kappa_r\in\Z$, we set $$
\Lam:=\Lam_{\overline{\kappa_1}}+\cdots+\Lam_{\overline{\kappa_r}}\in P^+ ,
$$
where $\overline{\kappa_j}:=\kappa_j+e\Z\in I=\Z/e\Z$ for each $1\leq j\leq r$. We call $r$ the level of $\Lam$.

Let $1\neq\xi\in K^\times$ such that $\xi$ has quantum characteristic $e$. That means, either $e>1$ is the minimal positive integer $k$ such that $1+\xi+\xi^2+\cdots+\xi^{k-1}=0$; or $e:=0$ if no such positive integer $k$ exists.
Now we define the non-degenerate cyclotomic Hecke algebra \begin{equation}\label{HalphaLam}
\HH_{n,K}^\Lam:=\HH_{n,K}(\xi;\xi^{\kappa_1},\cdots,\xi^{\kappa_r}) .
\end{equation}

For each $\alpha=\sum_{i\in I}k_i\alpha_i\in Q^+$, we define $\Ht(\alpha):=\sum_{i\in I}k_i$. For each $n\in\N$, we set $Q_n^+:=\{\alpha\in Q_+|\Ht(\alpha)=n\}$. For any $\alpha\in Q_n^+$, we define $$
I^\alpha:=\biggl\{\nu=(\nu_1,\cdots,\nu_n)\in I^n\biggm|\sum_{i=1}^n\alpha_{\nu_i}=\alpha\biggr\}. $$
For each $\blam\in\P_{r,n}$ and $\gamma=(l,a,c)\in[\blam]$, we define $$
\res(\gamma):=\kappa_l+c-a+e\Z\in I=\Z/e\Z .
$$
We call $\gamma$ a $\res(\gamma)$-node. For each $\t\in\Std(\blam)$ and $1\leq k\leq n$, if $\t^{-1}(k)=\gamma$, then we define $\res_{\t}(k):=\res(\gamma)$. We set $$
\res(\t)=\bigl(\res_\t(1),\cdots,\res_\t(n)\bigr)\in I^n,\quad \bi^\blam:=\res({\tlam}).
$$ For any $\bi\in I^n$, we define $$
\Std(\bi):=\bigl\{\t\in\Std(\blam)\bigm|\blam\in\P_{r,n}, \res(\t)=\bi\bigr\}.
$$
We also write $\bi^\s:=\res(\s)$ for each $\s\in\Std(\blam)$. Finally, for $\alpha\in Q_n^+$, we define \begin{equation}\label{palphar}
\Parts[\alpha]^{(r)}:=\bigl\{\blam\in\P_{r,n}\bigm|\bi^\blam\in I^\alpha\bigr\}.
\end{equation}

\begin{dfn}\label{KLR}
Let $\alpha\in Q_n^+$. The type $A$ KLR algebra $\RR_{\alpha,K}$ is the unital associative algebra over $K$ generated by $$
y_1,\cdots,y_n, \psi_1,\cdots,\psi_{n-1}, e(\nu),\quad \nu\in I^\alpha ,
$$
satisfying the following defining relations:
  \begin{equation*}
    \begin{aligned}
      & e(\nu) e(\nu') = \delta_{\nu, \nu'} e(\nu), \ \
      \sum_{\nu \in I^{\alpha}}  e(\nu) = 1, \\
      & y_{k} y_{l} = y_{l} y_{k}, \ \ y_{k} e(\nu) = e(\nu) y_{k}, \\
      & \psi_{l} e(\nu) = e(s_{l}(\nu)) \psi_{l}, \ \ \psi_{k} \psi_{l} = \psi_{l} \psi_{k} \ \ \text{if} \ |k-l|>1, \\
      & \psi_{k}^2 e(\nu) =\begin{cases} 0, &\text{if $\nu_k=\nu_{k+1}$;}\\
      (y_{k}-y_{k+1})e(\nu), &\text{if $\nu_k\rightarrow \nu_{k+1}$;} \\
       (y_{k+1}-y_{k})e(\nu), &\text{if $\nu_k\leftarrow \nu_{k+1}$;} \\
      (y_{k+1}-y_k) (y_{k}-y_{k+1})e(\nu), &\text{if $\nu_k\rightleftarrows \nu_{k+1}$;}\\
      e(\nu), &\text{otherwise.}
      \end{cases}
       \\
      & (\psi_{k} y_{l} - y_{s_k(l)} \psi_{k}) e(\nu) = \begin{cases}
      -e(\nu) \ \ & \text{if $l=k, \nu_{k} = \nu_{k+1}$}, \\
      e(\nu) \ \ & \text{if $l=k+1, \nu_{k}=\nu_{k+1}$}, \\
      0 \ \ & \text{otherwise},
      \end{cases} \\[.5ex]
      & (\psi_{k+1} \psi_{k} \psi_{k+1}-\psi_{k} \psi_{k+1} \psi_{k})e(\nu)\\
      &\hspace*{8ex} =\begin{cases}
      e(\nu) & \text{if $\nu_{k} = \nu_{k+2}\rightarrow\nu_{k+1}$}, \\
      -e(\nu) & \text{if $\nu_{k} = \nu_{k+2}\leftarrow\nu_{k+1}$}, \\
      (2y_{k+1}-y_k-y_{k+2})e(\nu)  & \text{if $\nu_{k} = \nu_{k+2}\rightleftarrows\nu_{k+1}$}, \\
      0 \ \ & \text{otherwise},
      \end{cases}
    \end{aligned}
  \end{equation*}
for $\nu,\nu'\in I^\alpha$ and all admissible $k$ and $l$. For any $\Lam\in P^+$, let $\R[\alpha,K]$ be the quotient of $\RR_{\alpha,K}$ by the two-sided ideal generated by $$
y_1^{\<\alpha^\vee_{\nu_1},\Lam\>}e(\nu),\,\,\nu\in I^\alpha,
$$ where $\<,\>$ is the natural evaluation map associated to the Cartan datum of affine type $A$. The quotient algebra $\R[\alpha,K]$ is called the type $A$ cyclotomic KLR algebra.
\end{dfn}

Both $\RR_{\alpha,K}$ and $\R[\alpha,K]$ are $\Z$-graded with degree function determined by $$\deg e(\bi)=0,\qquad \deg y_a=2\qquad\text{and}\qquad \deg
  \psi_s e(\bi)=-a_{i_s,i_{s+1}},$$ for $1\le a\le n$, $1\le s<n$ and $\bi\in I^\alpha$, where $(a_{ij})_{i,j\in I}$ is the Cartan matrix of type $A_{e-1}^{(1)}$ (if $e>1$) or $A_{\infty}$ (if $e=0$).

The careful readers can notice that there are some sign differences with \cite[Definition 3.1]{HM} in the last three relations when $e\neq 2$. However, if $e\neq 2$, then this can be reconciled by consider the automorphism of $\R[\alpha,K]$ defined on KLR generators as follows: for any $\bi\in I^\alpha, 1\leq m\leq n, 1\leq r<n$, $$
e(\bi)\mapsto e(\bi),\,\, y_me(\bi)\mapsto y_me(\bi),\,\,\psi_re(\bi)\mapsto \begin{cases} -\psi_re(\bi), &\text{if $i_{r+1}=i_r+1$;}\\
\psi_re(\bi), &\text{otherwise.}
\end{cases}.$$

For each $\bi\in I^n$, there is a well-defined idempotent (possibly being $0$) $e(\bi)$ in $\HH_{n,K}^\Lam$ \cite[\S4.1]{BK:GradedKL}. For each $\alpha\in Q_n^+$, we define $e(\alpha):=\sum_{\bi\in I^\alpha}e(\bi)$. Then by \cite{LM}, $\{e(\alpha)|e(\alpha)\neq 0,\alpha\in Q_n^+\}$ is a complete set of pairwise orthogonal central primitive idempotents of $\HH_{n,K}^\Lam$. In other words, each nonzero two-sided ideal $\HH_{\alpha,K}^\Lam:=\HH_{n,K}^\Lam e(\alpha)$ is a block of $\HH_{n,K}^\Lam$.

\begin{thm}\text{\rm (\cite[Theorem 1.1]{BK:GradedKL})}\label{BKiso} Let $\alpha\in Q_n^+$, $\Lam=\Lam_{\overline{\kappa_1}}+\cdots+\Lam_{\overline{\kappa_r}}$, where $\kappa_1,\cdots,\kappa_r\in\Z$. Let $1\neq\xi\in K^\times$ such that $\xi$ has quantum characteristic $e$. Then there is an isomorphism of $K$-algebras $\theta_K^\Lam: \R[\alpha,K]\cong\HH_{\alpha,K}^\Lam$ that sends $e(\nu)$ to $e(\nu)$ for all $\nu\in I^\alpha$.
\end{thm}

\begin{rem} Note that the assumption that $\xi$ has quantum characteristic $e$ in Theorem \ref{BKiso} implies that $\cha K$ is coprime to $e$ whenever $e>1$ and $\cha K>0$.
\end{rem}

\begin{thm}\label{application2} Suppose that either $\cha K=0$; or $e=0$; or $\cha K>0$, $e>1$ and $\cha K$ is coprime to $e$. Then the dimensions of both the cocenter and the center of certain cyclotomic KLR algebra $\R[\alpha,K]$ are equal to
$|\Parts[\alpha]^{(r)}|$, which is independent of the characteristic of the ground field $K$.
\end{thm}

\begin{proof} If $F$ is a field extension of the field $K$, then using \cite[Proposition 2.1(c)]{SVV} and the fact that $Z(\R[\alpha,K])\cong (\Tr(\R[\alpha,K]))^*$, $Z(\R[\alpha,F])\cong (\Tr(\R[\alpha,F]))^*$ we can deduce that $\dim_K Z(\R[\alpha,K])=\dim_F Z(\R[\alpha,F])$.
If $\cha K=0$; or $e=0$; or $\cha K>0$, $e>1$ and $\cha K$ is coprime to $e$, then we can find a field extension $F$ of $K$ such that $F$ contains an element $\xi\in F$ which has quantum characteristic $e$. In this case, applying Theorem \ref{BKiso} we get that for any $\alpha\in Q_n^+$, \begin{equation}\label{cent11}
\dim_K Z(\R[\alpha,K])=\dim_F Z(\R[\alpha,F])=\dim_F Z(\HH_{\alpha,F}^\Lam).
\end{equation}
On the other hand, using \cite[Theorem A]{HuMathas:SeminormalQuiver}, we know that there is a modulo reduction system $(F,\O:=F[x],\K:=F(x))$, where $x$ is an indeterminate over $F$, and cyclotomic Hecke algebras $\HH_{\alpha,\O}^{\mathbf{\kappa}}, \HH_{\alpha,\K}^{\mathbf{\kappa}}$ such that $$
F\otimes_{\O}\HH_{\alpha,\O}^{\mathbf{\kappa}}\cong \HH_{\alpha,F}^\Lam,\,\,\K\otimes_{\O}\HH_{\alpha,\O}^{\mathbf{\kappa}}\cong\HH_{\alpha,\K}^{\mathbf{\kappa}},
$$
and $\HH_{\alpha,\K}^{\mathbf{\kappa}}$ is split semisimple over $\K$ with seminormal basis $\{\mf_{\s\t}\mid (\s,\t)\in \SStd(\blam), \blam\in \P_\alpha^{(r)}\}$. In particular, by Lemma \ref{fsemi},
$\dim_{\K}Z(\HH_{\alpha,\K}^{\mathbf{\kappa}})=|\Parts[\alpha]^{(r)}|$. Following a similar argument as in Lemma \ref{lower} (2), we have that $$
\dim_F Z(\HH_{\alpha,F}^\Lam)\geq \dim_{\K}Z(\HH_{\alpha,\K}^{\mathbf{\kappa}})=|\Parts[\alpha]^{(r)}|,\,\,\forall\,\alpha\in Q_n^+.
$$
However, by Theorem \ref{mainthm2}, we have that $$
\sum_{\alpha\in Q_n^+}\dim_F Z(\HH_{\alpha,F}^\Lam)=\dim_F Z(\HH_{n,F}^\Lam)=|\P_{r,n}|=\sum_{\alpha\in Q_n^+}|\Parts[\alpha]^{(r)}|.
$$
It follows that $\dim_F Z(\HH_{\alpha,F}^\Lam)=|\Parts[\alpha]^{(r)}|,\,\,\forall\,\alpha\in Q_n^+$. Combining this with (\ref{cent11}), we get that
$\dim_K Z(\R[\alpha,K])=\dim_F Z(\HH_{\alpha,F}^\Lam)=|\Parts[\alpha]^{(r)}|$, which independent of the characteristic of the ground field $K$.

As a result, we have that $$
\dim_K\Tr(\R[\alpha,K])=\dim_K Z(\R[\alpha,K])=|\Parts[\alpha]^{(r)}|, $$ which is independent of the characteristic of the ground field $K$.
\end{proof}

\begin{rem} If $e=\cha K>1$, then using Brundan-Kleshchev's isomorphism \cite[Theorem 1.1]{BK:GradedKL} in the degenerate setting and \cite[Theorem 1]{Brundan:degenCentre}, one can also deduce that the dimensions of both the cocenter and the center of certain cyclotomic KLR algebra $\R[\alpha,K]$ are equal to
$|\Parts[\alpha]^{(r)}|$.
\end{rem}

\bigskip
\section*{Appendix}

The purpose of this section is to give a proof of Lemma \ref{r reduce}.
\medskip

\begin{lem}[\text{\cite[Lemma 1.4]{BM}}]\label{BM1} Let $a,\,b\in\Z^{>0}$. We have the following equalities:$$\begin{aligned}
(1)\,\,&s_it_{k,a}=\begin{cases}t_{k,a}s_i, &\text{if $i>k+1 $;}\\
t_{k+1,a}, &\text{if $i=k+1$;}\\
t_{k,a}s_{i+1}, &\text{if $i<k$,}
\end{cases}\\
(2)\,\,&t_{k,a}t_{k,b}=\begin{cases}t_{k-1,b}t_{k,a}s_1, &\text{if $k>0 $;}\\
t_{0,a+b}, &\text{if $k=0$,}
\end{cases}\\
(3)\,\,&t_{k+m,a}t_{k,b}=t_{k-1,b}t_{k+m,a}s_1,\,\forall\, k>0,\,m\geq 0.
\end{aligned}
$$
\end{lem}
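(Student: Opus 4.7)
My plan is to derive all three parts directly from the defining relations of $W_n$ in Definition \ref{crg}, using the twisted commutation (\ref{wbrela}) as the only non-trivial input.

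For Part (1), each of the three cases is a short manipulation. When $i>k+1$, the generator $s_i$ commutes with $t$ (since $i\ge 2$) and with every $s_j$ for $j\le k$ (since $|i-j|\ge 2$), so it passes to the right end of $t_{k,a}$ unchanged. The case $i=k+1$ is literally the definition of $t_{k+1,a}$. For $i<k$, I would first commute $s_i$ leftward past $s_k,\dots,s_{i+2}$, then apply the braid relation $s_i s_{i+1} s_i = s_{i+1}s_i s_{i+1}$, and finally move the resulting $s_{i+1}$ all the way to the right using that $s_{i+1}$ commutes with $t$ and with $s_{i-1},\dots,s_1$.

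Part (2) with $k=0$ is immediate: $t_{0,a}t_{0,b}=t^at^b=t^{a+b}=t_{0,a+b}$. For $k>0$, the statement is simply the $m=0$ specialization of Part (3), so only Part (3) needs real work.

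For Part (3), I would first rewrite both sides so that the \emph{only} place the element $t$ appears is in a single block. On the left, because $t^a$ commutes with $s_2,\dots,s_k$, one obtains
\[
t_{k+m,a}t_{k,b}=s_{k+m}\cdots s_1\,s_k s_{k-1}\cdots s_2\,\bigl(t^a s_1 t^b\bigr),
\]
and then (\ref{wbrela}) in the form $t^a s_1 t^b = s_1 t^b s_1 t^a s_1$ lets one pull $t^b$ to the left of $t^a$. Doing the analogous manipulation on the right, using that $t^b$ commutes with $s_2,\dots,s_{k+m}$, reduces the claim to the purely symmetric-group identity
\[
s_{k+m}s_{k+m-1}\cdots s_1\,\cdot\,s_ks_{k-1}\cdots s_1
=s_{k-1}s_{k-2}\cdots s_1\,\cdot\,s_{k+m}s_{k+m-1}\cdots s_2
\]
in $\Sym_n$. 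I would verify this identity either by induction on $k$ (repeatedly commuting $s_{k+m},\dots,s_{k+1}$ past $s_{k-1},\dots,s_1$ and applying $s_k s_{k-1} s_k = s_{k-1}s_k s_{k-1}$), or more transparently by checking that both sides act on $\{1,\dots,n\}$ as the common permutation sending $1\mapsto k$, $2\mapsto k+m+1$, $j\mapsto j-2$ for $3\le j\le k+1$, $j\mapsto j-1$ for $k+2\le j\le k+m+1$, and fixing the rest. Neither step presents any real obstacle; the only mildly delicate point is keeping track of exactly which $s_j$'s commute with which, and the main conceptual ingredient is the single use of (\ref{wbrela}).
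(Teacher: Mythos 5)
The paper itself offers no proof of this lemma; it is cited directly from Bremke--Malle \cite[Lemma 1.4]{BM}, so there is no in-paper argument to compare against. What can be checked is whether your direct verification from the presentation of $W_n$ is correct, and it is.

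Parts (1) and (2) are routine and you handle them correctly: the three cases of (1) follow from the far-commutation of $s_i$ with $t$ and the distant $s_j$, the single braid relation $s_is_{i+1}s_i=s_{i+1}s_is_{i+1}$, and the definition of $t_{k+1,a}$; the $k=0$ case of (2) is the identity $t^at^b=t^{a+b}$; and the $k>0$ case is indeed the $m=0$ specialization of (3). For (3), your reduction is sound: writing $t_{k+m,a}t_{k,b}=s_{k+m}\cdots s_1\,s_k\cdots s_2\,(t^as_1t^b)$ and $t_{k-1,b}t_{k+m,a}s_1=s_{k-1}\cdots s_1\,s_{k+m}\cdots s_2\,(t^bs_1t^a s_1)$, and then rewriting $t^as_1t^b=s_1t^bs_1t^as_1$ via (\ref{wbrela}), the common factor $t^bs_1t^as_1$ cancels and the claim becomes the symmetric-group identity
\[
(s_{k+m}\cdots s_1)(s_k\cdots s_1)=(s_{k-1}\cdots s_1)(s_{k+m}\cdots s_2).
\]
Both sides are cyclic shifts whose composition one can compute directly: both send $1\mapsto k$, $2\mapsto k+m+1$, $j\mapsto j-2$ for $3\le j\le k+1$, $j\mapsto j-1$ for $k+2\le j\le k+m+1$, and fix everything larger. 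This is exactly the permutation you record, so the identity, and with it Part (3), is verified. The only cosmetic issue is the phrase ``commute $s_i$ leftward past $s_k,\dots,s_{i+2}$'' in the $i<k$ case, where $s_i$ is on the far left and must move \emph{right} past those factors; the intent is clear. Overall the proposal is a correct and complete derivation from the defining relations, which is presumably close to what Bremke--Malle do in the cited source.
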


\noindent
{\textbf{Proof of Lemma \ref{r reduce}:}} The proof of both (a) and (b) are similar to \cite[Propsition 2.4 (b),(c)]{GP}. For the reader's convenience we include the details below.

(a). Let $l\in \{1,\cdots,r-1\}$. Using Lemma \ref{BM1}(1), we can write $$\begin{aligned}
w(1)&=t_{m+j+1,l}s_{j+2}\cdots s_{j+m+1}s_1\cdots s_{j+m+k+1} x\\
v(1)&=t_{j,l}s_1\cdots s_{j+k} s_{j+k+2}\cdots s_{j+k+m+1}x,
\end{aligned}$$ which are both BM normal forms \eqref{BM2}. Let $w_1:=w(1)$ and for $i=2,3,\cdots,k+1$, we set $$
w_i=t_{j,l}s_{m+i+j}\cdots s_{i+j}\cdots s_{m+i+j}s_{1}\cdots s_{m+k+j+1} x.
$$
If $k=0$ then $$
w(1)=s_{j+1}\cdots s_{j+m}s'_{j+m+1,l}x=s'_{j+m+1,l}s_{j+1}\cdots s_{j+m} x,\quad v(1)=s'_{j,l}s_{j+2,}\cdots s_{j+m+1}x.
$$
In this case, using braid relations and BM normal forms we see that $ w(1)\overset{(s_{m+1+j},\cdots, s_{1+j})}{\longrightarrow} v(1)$ and we are done. Henceforth, we
assume $k\geq 1$.

We claim that, for each $1 \leq i\leq k$,  \begin{equation}\label{i1} w_i\overset{(s_{m+i+j},\cdots, s_{i+j})}{\longrightarrow} w_{i+1}.
\end{equation}

We first consider the case when $i=1$. Assume $k\geq 1$. In this case, using braid relations, we see that $$
s_{m+j+1}w(1)s_{m+j+1}=t_{m+j,l}s_{j+2}\cdots s_{m+j+1}s_{m+j+2}s_1\cdots s_{m+k+j+1} x.
$$
Since $s_{m+j+1}w(1)=t_{m+j,l}s_{j+2}\cdots s_{m+j+1}s_1\cdots s_{m+k+j+1} x$ is still a BM normal form,  $\ell(s_{m+j+1}w(1))<\ell(w(1))$ by Lemma \ref{BM2}. Similarly, using braid relations, we have $$\begin{aligned}
&\quad\,s_{m+j}(s_{m+j+1}w(1)s_{m+j+1})s_{m+j}=s_{m+j}t_{m+j,l}s_{j+2}\cdots s_{m+j+2}s_1\cdots s_{m+k+j+1}s_{m+j} x\\
&=(t_{m+j-1,l}s_{j+2}\cdots s_{m+j+1}s_{m+j+2})s_{m+j+1}s_1\cdots s_{m+k+j+1} x\\
&=t_{m+j-1,l}s_{m+j+2}s_{j+2}\cdots s_{m+j+1}s_{m+j+2}s_1\cdots s_{m+k+j+1} x,
\end{aligned}
$$ and by the same argument as before, $$\ell(s_{m+j}(s_{m+j+1}w(1)s_{m+j+1}))<\ell((s_{m+j+1}w(1)s_{m+j+1}).
$$
In general, it follows from a similar argument that $$ w_1\overset{(s_{m+1+j},s_{m+j},\cdots, s_{1+j})}{\longrightarrow} w_{2}.
$$
This prove (\ref{i1}) for $i=1$,

Now assume $2\leq i\leq k$. Note that $s_{m+i+j}$ commutes with $t_{j,l}$. It follows again from braid relations that $$
s_{m+i+j}w_is_{m+i+j}=(t_{j,l}s_{m+i+j-1}\cdots s_{i+j}\cdots s_{m+i+j})s_{m+i+j+1}s_{1}\cdots s_{m+k+j+1} x.
$$ As $s_{m+i+j}w_i=t_{j,l}s_{m+i+j-1}\cdots s_{i+j}\cdots s_{m+i+j}s_{1}\cdots s_{m+k+j+1} x$ is a BM normal form, we can deduce from Lemma \ref{BM2} that $$
\ell(s_{m+i+j}w_i)<\ell(w_i).
$$
Similarly, using braid relations, for each $i+j\leq b\leq m+i+j-1$, we have
$$\begin{aligned}
&\quad\,s_b(s_{b+1}\cdots s_{m+i+j}w_is_{m+i+j}\cdots s_{b+1})s_b\\
&=s_b\bigl((t_{j,l}s_{m+i+j+1}\cdots s_{b+3})s_{b}s_{b-1}\cdots s_{i+j}\cdots s_{m+i+j}s_{m+i+j+1}s_1\cdots s_{m+k+j+1}\bigr)s_b x\\
&=(t_{j,l}s_{m+i+j+1}\cdots s_{b+3})s_{b+2}s_{b-1}\cdots s_{i+j}\cdots s_{m+i+j}s_{m+i+j+1}s_1\cdots s_{m+k+j+1} x.
\end{aligned}$$
Take $b=i+j$, we can get that $$\begin{aligned}
&\quad\,s_{i+j}(s_{i+j+1}\cdots s_{m+i+j}w_is_{m+i+j}\cdots s_{i+j+1})s_{i+j} \\
&=s_{i+j}\bigl((t_{j,l}s_{m+i+j+1}\cdots s_{i+j+3})(s_{i+j}s_{i+j+1}\cdots s_{m+i+j}s_{m+i+j+1})s_1\cdots s_{m+k+j+1}\bigr)s_{i+j} x\\
&=(t_{j,l}s_{m+i+j+1}\cdots s_{i+j+3})s_{i+j+1}\cdots s_{m+i+j}s_{m+i+j+1}s_{i+j+1}s_1\cdots s_{m+k+j+1} x\\
&=(t_{j,l}s_{m+i+j+1}\cdots s_{i+j+3})s_{i+j+2}s_{i+j+1}\cdots s_{m+i+j}s_{m+i+j+1}s_1\cdots s_{m+k+j+1} x=w_{i+1}.
\end{aligned}$$
Moreover, by the same argument, for each $i+j\leq b\leq m+i+j-1$, we have $$\begin{aligned}
&\ell(s_b(s_{b+1}\cdots s_{m+i+j}w_is_{m+i+j}\cdots s_{b+1}))=\ell(s_{b+1}\cdots s_{m+i+j}w_is_{m+i+j}\cdots s_{b+1})-1\\
&<\ell(s_{b+1}\cdots s_{m+i+j}w_is_{m+i+j}\cdots s_{b+1}) .
\end{aligned}$$
Finally, by a direct calculation one can see that \begin{equation}\label{afinal}
w(k+1)\overset{(s_{m+1+k+j},s_{m+k+j},\cdots, s_{1+k+j})}{\longrightarrow} v(1).
\end{equation}
This proves the lemma for $w(c)$ when $c=1$.

(b). As in (a), we can use braid relations to write $$\begin{aligned}
w(2)&=t_{j,l_1}t_{m+j+1,l_2}s_2\cdots s_{m+j+1}s_1\cdots s_{m+k+j+1} x\\
v(2)&=t_{j,l_2}t_{k+j+1,l_1}s_2\cdots s_{k+j+1}s_1\cdots s_{m+k+j+1} x,
\end{aligned}$$
where both of them are BM normal forms. If $k=0$, then $$
\begin{aligned}
w(2)&=t_{j,l_1}t_{m+j+1,l_2}s_2\cdots s_{m+j+1}s_1\cdots s_{m+j+1} x\\
v(2)&=t_{j,l_2}t_{j+1,l_1}s_2\cdots s_{j+1}s_1\cdots s_{m+j+1}=t_{j,l_2}t_{j+1,l_1}s_1\cdots s_{m+j+1}s_1\cdots s_{j} x.
\end{aligned}$$
In this case, using Lemma \ref{BM1}(2), it is easy to check that $$
w(2)\overset{(s_{m+j+1},\cdots,s_{j+1})}{\longrightarrow} v(2).
$$
Henceforth we assume $k\geq 1$.

Let $w_1=w(2)$ and for $2\leq i\leq k+1$ we set $$
w_i=t_{j,l_1}t_{m-k+j,l_2}s_{m+i+j}\cdots s_{m+2i+j-k-1}s_2\cdots s_{m+i+j}s_1\cdots s_{m+k+j+1} x.
$$
Now a completely similar computation as in (a) shows for any $1\leq i\leq k$, \begin{equation}\label{bwi}
w_i\overset{(s_{m+i+j},\cdots,s_{m-k+2i+j-1})}{\longrightarrow} w_{i+1} .
\end{equation}

Note that $$
w_{k+1}=t_{j,l_1}t_{m-k+j,l_2}s_{m+k+j+1}s_2\cdots s_{m+k+j+1}s_1\cdots s_{m+k+j+1} x.
$$
It is clear that $$w_{k+1}\overset{s_{m+k+j+1}}{\longrightarrow} x_{k+1}:=t_{j,l_1}t_{m-k+j,l_2}s_2\cdots s_{m+k+j+1}s_1\cdots s_{m+k+j} x,
$$ and $\ell(w_{k+1}s_{m+k+j+1})=\ell(w_{k+1})-1$ by Lemma \ref{BM2}. Next, for each $1\leq i\leq k$, we define $$
x_i=t_{j,l_1}t_{m-k+j,l_2}s_2\cdots s_{m+k+j+1}s_1\cdots s_{m+i+j-1}s_{m+i+j-2}\cdots s_{m-k+2i+j-2} x.
$$
We claim that for each $1\leq i\leq k$, \begin{equation}\label{xixi} x_{i+1}\overset{(s_{m-k+2i+j},\cdots,s_{m+i+j})}{\longrightarrow}x_i.
\end{equation}

First, using braid relations, we can check that $$\begin{aligned}
x_{k+1}&=t_{j,l_1}t_{m-k+j,l_2}s_2\cdots s_{m+k+j+1}s_1\cdots s_{m+k+j}x\overset{s_{m+k+j}}{\longrightarrow}\\
 &\qquad\qquad x_k=t_{j,l_1}t_{m-k+j,l_2}s_2\cdots s_{m+k+j+1}s_1\cdots s_{m+k+j}s_{m+k+j-2} x.
\end{aligned}
$$
In general, for $1\leq i\leq k-1$, we have $$\begin{aligned}
&\quad\,s_{m-k+2i+j}x_{i+1}s_{m-k+2i+j}\\
&=t_{j,l_1}t_{m-k+j,l_2}s_2\cdots s_{m+k+j+1}s_1\cdots s_{m+i+j}\cdots s_{m-k+2i+j+1}s_{m-k+2i+j-2} x
\end{aligned}
$$ and by Lemma \ref{BM2}, $$\ell(x_{i+1}s_{m-k+2i+j})<\ell(x_{i+1}).
$$
Similarly, $$\begin{aligned}
&\quad\,s_{m-k+2i+j+1}(s_{m-k+2i+j}x_{i+1}s_{m-k+2i+j})s_{m-k+2i+j+1}\\
&=s_{m-k+2i+j+1}t_{j,l_1}t_{m-k+j,l_2}s_2\cdots s_{m+k+j+1}s_1\cdots s_{m+i+j}\cdots s_{m-k+2i+j+1} \\
&\qquad s_{m-k+2i+j-2}s_{m-k+2i+j+1} x\\
&=t_{j,l_1}t_{m-k+j,l_2}s_2\cdots s_{m+k+j+1}s_1\cdots s_{m+i+j}\cdots s_{m-k+2i+j+2}s_{m-k+2i+j-1}\\
&\qquad s_{m-k+2i+j-2} x
\end{aligned}$$ and  $$
\ell((s_{m-k+2i+j}x_{i+1}s_{m-k+2i+j})s_{m-k+2i+j+1})<\ell(s_{m-k+2i+j}x_{i+1}s_{m-k+2i+j})
$$by Lemma \ref{BM2}.

Repeating this argument, we shall get that $$x_{i+1}\overset{(s_{m-k+2i+j},\cdots,s_{m+i+j})}{\longrightarrow}x_i.
$$

Now for $i=1,2,\cdots,m-k$, we define $$
v_i=t_{j,l_1}t_{i+j,l_2}s_2\cdots s_{m+k+j+1}s_1\cdots s_{k+i+j}\cdots s_{i+j} x.
$$
Note that $v_{m-k}=x_1$. By a direct calculation, one can check that for each $2\leq i \leq m-k$, $$ v_i\overset{(s_{i+j},\cdots,s_{i+k+j})}{\longrightarrow}v_{i-1}.
$$

Finally, we claim that $$
v_1\overset{(s_{j+1},\cdots,s_{j+1+k})}{\longrightarrow}t_{j,l_2}t_{k+j+1,l_1}s_1\cdots s_{m+k+j+1}s_1\cdots s_{k+j} x=v(2).
$$

In fact, we have $$\begin{aligned}
&\quad\, s_{j+k+1}s_{j+k}\cdots s_{j+1}v_1s_{j+1}\cdots s_{j+k}s_{j+k+1}\\
&=t_{j+1+k,l_1}t_{j+1,l_2}s_2\cdots s_{m+k+j+1}s_1\cdots s_{j+k} x\\
&=t_{j,l_2}t_{j+1+k,l_1}s_1s_2\cdots s_{m+k+j+1}s_1\cdots s_{j+k} x\quad  \text{(by Lemma \ref{BM1}(3))}\\
&=v(2) .
\end{aligned}
$$
Moreover, from the proof it is easy to check that each step of the above satisfies the requirement (\ref{2possibi}).
This proves $v_1\overset{(s_{j+1},\cdots,s_{j+1+k})}{\longrightarrow}v(2)$.

(c). Using braid relations, we can write $$\begin{aligned}
w(3)&=t_{j,l_1}t_{m+j+1,l_2}s_2\cdots s_{m+j+1}s_1\cdots s_{2m+j+1} x\\
v(3)&=t_{j,l_2}t_{m+j+1,l_1}s_2\cdots s_{m+j+1}s_1\cdots s_{2m+j+1} x.
\end{aligned}$$ We shall check $w(3)\rightarrow v(3)$. The case $m=0$ is easy since we have $$w(3)\overset{s_{j+1}}{\longrightarrow} v(3).
$$ From now on we suppose $m>0$. Let $w_1:=w(3)$ and for $2\leq i\leq m+1$, we define $$ w_i=t_{j,l_1}t_{j+1,l_2}s_2\cdots s_{m+i+j}\cdots s_{2i+j-1} s_1\cdots s_{2m+j+1} x. $$
We claim that for any $1\leq i\leq m$, \begin{equation}\label{claimwi1} w_{i}\overset{(s_{m+j+i},\cdots,s_{2i+j})}{\longrightarrow}w_{i+1}.
\end{equation}
Let's check this in detail. For $i=1$, we have $$
s_{m+j+1}w_1s_{m+j+1}=t_{j,l_1}t_{m+j,l_2}s_2\cdots s_{m+j+2}s_1\cdots s_{2m+j+1} x
$$ with $ \ell(s_{m+j+1}w)<\ell(w)$ by Lemma \ref{BM1} and Lemma \ref{BM2}. Similarly, $$\begin{aligned}
&\quad\, s_{m+j}(s_{m+j+1}w_1s_{m+j+1})s_{m+j}\\
&=s_{m+j}t_{j,l_1}t_{m+j,l_2}s_2\cdots s_{m+j+2}s_1\cdots s_{2m+j+1}s_{m+j} x\\
&=t_{j,l_1}t_{m+j-1,l_2}s_2\cdots s_{m+j+2}s_{m+j+1}s_1\cdots s_{2m+j+1} x
\end{aligned}
$$ and $\ell(s_{m+j}(s_{m+j+1}w_1s_{m+j+1}))<
\ell((s_{m+j+1}w_1s_{m+j+1}))$, by Lemma \ref{BM1} and Lemma \ref{BM2}. Continue in the same way we get $$
w_{1}\overset{(s_{m+j+i},\cdots,s_{2+j})}{\longrightarrow}w_{2}.
$$
Now let $i\geq 2$. We have $$s_{m+j+i}w_is_{m+j+i}=t_{j,l_1}t_{j+1,l_2}s_2\cdots s_{m+i+j+1} s_{m+i+j-2}\cdots s_{2i+j-1} s_1\cdots s_{2m+j+1} x
$$ and $\ell(s_{m+j+i}w_i)<\ell(w_i)$, by Lemma \ref{BM1} and Lemma \ref{BM2}. Similarly, $$\begin{aligned}
&\quad\,s_{m+j+i-1}(s_{m+j+i}w_is_{m+j+i})s_{m+j+i-1}\\
&=s_{m+j+i-1}t_{j,l_1}t_{j+1,l_2}s_2\cdots s_{m+i+j+1} s_{m+i+j-2}\cdots s_{2i+j-1} s_1\cdots s_{2m+j+1}s_{m+j+i-1} x\\
&=t_{j,l_1}t_{j+1,l_2}s_2\cdots s_{m+i+j+1} s_{m+i+j}s_{m+i+j-3}\cdots s_{2i+j-1} s_1\cdots s_{2m+j+1} x
\end{aligned}$$ and by Lemma \ref{BM1} and Lemma \ref{BM2}, $$
\ell(s_{m+j+i-1}(s_{m+j+i}w_is_{m+j+i}))<\ell(s_{m+j+i}w_is_{m+j+i}).
$$
Repeating a similar calculation, we can eventually verify that $$
s_{2i+j}\cdots (s_{m+j+i}w_is_{m+j+i})\cdots s_{2i+j}=w_{i+1}. $$ This proves our claim (\ref{claimwi1}).

Next, we set $v_{m+1}:=w_{m+1}$, and for $1\leq i\leq m$, we define $$ v_i=t_{j,l_1}t_{j+1,l_2}s_2\cdots s_{2m+1+j}s_{m+i+j}\cdots s_{2i+j}s_1\cdots s_{m+i+j} x.$$
We claim that for each $2\leq i\leq m+1$, \begin{equation}\label{vi2} v_{i}\overset{(s_{2i+j-1},\cdots, s_{m+i+j})}{\longrightarrow}v_{i-1}.
\end{equation}

In fact, take $i=m+1$, we have $$s_{2m+j+1}v_{m+1}s_{2m+j+1}=t_{j,l_1}t_{j+1,l_2}s_2\cdots s_{2m+j}s_{2m+j+1} s_{2m+j}s_1\cdots s_{2m+j} x=v_m,
$$ and by Lemma \ref{BM1} and Lemma \ref{BM2}, $$\ell(v_{m+1}s_{2m+j+1})<\ell(v_{m+1}).
$$
This proves (\ref{vi2}) for $i=m+1$.

For $2\leq i\leq m$, we have $$s_{2i+j-1}v_{i}s_{2i+j-1}=t_{j,l_1}t_{j+1,l_2}s_2\cdots s_{2m+1+j}s_{m+i+j}\cdots s_{2i+j+1}s_{2i+j-2}s_1\cdots s_{m+i+j} x
$$ and by Lemma \ref{BM1} and Lemma \ref{BM2}, $\ell(v_{i}s_{2i+j-1})<\ell(v_{i})$.  Similarly, we can compute $$\begin{aligned}
&\quad\,s_{2i+j}(s_{2i+j-1}v_{i}s_{2i+j-1})s_{2i+j}\\
&=s_{2i+j}t_{j,l_1}t_{j+1,l_2}s_2\cdots s_{2m+1+j}s_{m+i+j}\cdots s_{2i+j+1}s_{2i+j-2}s_1\cdots s_{m+i+j}s_{2i+j} x\\
&=t_{j,l_1}t_{j+1,l_2}s_2\cdots s_{2m+1+j}s_{m+i+j}\cdots s_{2i+j+2}s_{2i+j-1}s_{2i+j-2}s_1\cdots s_{m+i+j} x\end{aligned}
$$ and  $$
\ell(s_{2i+j-1}v_{i}s_{2i+j-1})s_{2i+j})<\ell(s_{2i+j-1}v_{i}s_{2i+j-1})
$$ by Lemma \ref{BM1} and Lemma \ref{BM2}. Repeating a similar calculation, we can eventually verify that $$
s_{m+i+j}\cdots (s_{j+2i-1}v_is_{j+2i-1})\cdots s_{m+i+j}=v_{i-1}.$$ This proves our claim (\ref{vi2}).

Finally, by a similar calculation, one can verify that $$v_1\overset{(s_{j+1},\cdots,s_{m+j+1})}{\longrightarrow}t_{m+j+1,l_1}t_{j+1,l_2}s_2\cdots s_{2m+j+1}s_1\cdots s_{m+j} x,
$$
and each step of the above satisfies the requirement (\ref{2possibi}). Now applying Lemma \ref{BM1}(3), we see that $$\begin{aligned}
&\quad\,t_{m+j+1,l_1}t_{j+1,l_2}s_2\cdots s_{2m+j+1}s_1\cdots s_{m+j} x\\
&=t_{j,l_2}t_{m+j+1,l_1}s_1(s_2\cdots s_{2m+j+1})(s_1\cdots s_{m+j}) x\\
&=t_{j,l_2}t_{m+j+1,l_1}s_2\cdots s_{m+j+1}s_1\cdots s_{2m+j+1} x=v(3) .
\end{aligned}
$$
This completes the proof of the lemma.\qed

\bigskip

\end{document}